\newtheorem{LetterTheorem}{Theorem}
\newtheorem{Theorem}{Theorem}[section]
\newtheorem{Proposition}[Theorem]{Proposition}
\newtheorem{Lemma}[Theorem]{Lemma}
\newtheorem{Corollary}[Theorem]{Corollary}
\theoremstyle{definition}
\newtheorem{Remark}[Theorem]{Remark}
\newtheorem{Definition}[Theorem]{Definition}
\newtheorem{Construction}[Theorem]{Construction}
\newtheorem{Example}[Theorem]{Example}
\newcommand{\rmod}{\operatorname{mod}}
\newcommand{\stmod}{\operatorname{\underline{mod}}}
\newcommand{\stgproj}{\operatorname{\underline{Gproj}}}
\newcommand{\proj}{\operatorname{proj}}
\newcommand{\Gproj}{\operatorname{Gproj}}
\newcommand{\CC}[1]{\mathcal{C}(\rmod #1 )}
\newcommand{\CCp}[1]{\mathcal{C}(\proj #1 )}
\newcommand{\CCpleft}[1]{\mathcal{C}(#1\text{-proj})}
\newcommand{\KKp}[1]{\mathcal{K}(\proj #1 )}
\newcommand{\KKptac}[1]{\mathcal{K}_{\mathrm{tac}}(\proj #1)}
\newcommand{\Pperp}{\!{}^\perp \mathcal{P}\!}
\newcommand{\HH}{\mathrm{H}}
\newcommand{\LL}{\mathcal{L}}
\newcommand{\FF}{\mathcal{F}}
\newcommand{\GG}{\mathcal{G}}
\newcommand{\ZZ}{\mathbb{Z}}
\newcommand{\PP}{\mathcal{P}\!}
\newcommand{\tleq}{\tau_{\leqslant 0}\,}
\newcommand{\Du}{\operatorname{D}}
\newcommand{\Tr}{\operatorname{Tr}}
\newcommand{\im}{\operatorname{Im}}
\newcommand{\Ker}{\operatorname{Ker}}
\newcommand{\Cok}{\operatorname{Cok}}
\newcommand{\ddim}{\operatorname{domdim}}
\newcommand{\Hom}{\operatorname{Hom}}
\newcommand{\Ext}{\operatorname{Ext}}
\newcommand{\Tor}{\operatorname{Tor}}
\newcommand{\rad}{\operatorname{rad}}
\newcommand{\soc}{\operatorname{soc}}
\newcommand{\depth}{\operatorname{depth}}
\newcommand*{\bt}{}
\DeclareRobustCommand*{\bt}{%
  {\mathbin{\mathpalette\bt@{}}}%
}
\newcommand*{\bt@scalefactor}{.75}
\newcommand*{\bt@widthfactor}{1.4}
\newcommand*{\bt@}[2]{%
  \sbox0{$#1\vcenter{}$}%
  \sbox2{$#1\cdot\m@th$}%
  \hbox to \bt@widthfactor\wd2{%
    \hfil
    \raise\ht0\hbox{%
      \scalebox{\bt@scalefactor}{%
        \lower\ht0\hbox{$#1\bullet\m@th$}%
      }%
    }%
    \hfil
  }\hspace{.07em}
}
\newcommand{\enger}{\setlength{\arraycolsep}{.8pt}
                           \renewcommand{\arraystretch}{0.7} }
\newcommand{\tikzenger}{\setlength{\arraycolsep}{1pt}
                           \renewcommand{\arraystretch}{0.65} }
\newcommand{\matze}[2]{\enger{
\left(
\begin{array}{c}
\scriptstyle #1 \\
\scriptstyle #2 \\
\end{array}
\right)
}}
\newcommand{\matez}[2]{\enger{
\left(
\begin{array}{rr}
\scriptstyle #1\; & \; \scriptstyle #2\; \\
\end{array}
\right)
}}
\newcommand{\tikzmatze}[2]{\tikzenger{
\left(
\begin{array}{c}
#1 \\
#2 \\
\end{array}
\right)
}}
\newcommand{\tikzmatez}[2]{\tikzenger{
\left(
\begin{array}{rr}
#1\; & \;#2\; \\
\end{array}
\right)
}}
\newcommand{\tikzmatde}[3]{\tikzenger{
\left(
\begin{array}{c}
#1 \\
#2 \\
#3 \\
\end{array}
\right)
}}
\newcommand{\tikzmatdd}[9]{\tikzenger{
\left(
\begin{array}{cccc}
#1 & #2 & #3 \\
#4 & #5 & #6 \\
#7 & #8 & #9 \\
\end{array}
\right)
}}
\newcommand{\tikzmatzd}[6]{\tikzenger{
\left(
\begin{array}{ccc}
#1 & #2 & #3 \\
#4 & #5 & #6 \\
\end{array}
\right)
}}
\title{On stable equivalences, perfect exact sequences and Gorenstein-projective modules}
\subjclass[2020]{16G10, 16E05, 18G65}
\keywords{stable module category, homotopy category, stable equivalence, Gorenstein-projective module, perfect exact sequence}
\author{Sebastian Nitsche}
\address{Sebastian Nitsche, Institute of Algebra and Number Theory, University of Stuttgart,\; Pfaffenwaldring 57, 70569 Stuttgart, Germany}
\email{sebastian.nitsche@mathematik.uni-stuttgart.de}
\begin{document}

\begin{abstract}
We consider the equivalence from the stable module category to a subcategory $\LL_A$ of the 
homotopy category constructed by Kato. This equivalence induces a correspondence between 
distinguished triangles in the homotopy category and perfect exact sequences in the module category. 
We show that an exact equivalence between categories $\LL_A$ and $\LL_B$ induces a stable equivalence of Morita type
between two finite dimensional algebra $A$ and $B$ under a separability assumption.
Moreover, we provide further sufficient conditions for a stable equivalence induced by an exact functor
to be of Morita type. This is shown using perfect exact sequences.
In particular, we study when a stable equivalence preserves perfect exact sequences up to projective direct summands.
As an application, we show that a stable equivalence preserves the category of stable Gorenstein-projective modules
if it preserves perfect exact sequences.
\end{abstract}

\maketitle

\section*{Introduction}
Besides Morita equivalences and derived equivalences, stable equivalences provide another important link between
the representation theory of two finite dimensional algebras $A$ and $B$.
In contrast to the other two, a general stable equivalence often preserves relatively few properties.
Moreover, a stable equivalence does not need to be induced by a functor on the level of the module category.
To remedy this, a stronger class of stable equivalences, called stable equivalences of Morita type, 
has been introduced by Brou\'e.
Stable equivalences of Morita type have been shown to preserve many properties of the algebra.

A result by Rickard in \cite{Rickard_RecentAdvances} states that for self-injective algebras
every stable equivalence that is induced by an exact functor between the module categories 
is isomorphic to a stable equivalence of Morita type. 
This has been generalized by Dugas and Mart\'inez-Villa as follows
in \cite{DugasMartinezVilla_MoritaType} for algebras whose semisimple quotient is separable.
A stable equivalence induced by tensoring with a bimodule ${}_AM_B$ which is projective on both sides 
is of Morita type if $\Hom_A(M,A)$ is projective over $B$. 
We apply their result to the following setting.

In \cite{Kato_Kernels}, Kato gives an equivalence $\stmod A \to \LL_A$ 
between the stable module category and a full subcategory $\LL_A$
of the unbounded homotopy category of projective modules.
Therefore, any stable equivalence induces an equivalence between $\LL_A$ and $\LL_B$.
However, if the equivalence is induced by a bimodule $M$,
we obtain that $M$ and $\Hom_B(M,B)$ induce a stable equivalence of Morita type.
\begin{LetterTheorem}[\cref{Thm:EquivOnLL-MoritaType}] \label{IntThm:EquivOnL_MoritaType}
Let $A$ and $B$ be finite dimensional $k$-algebras whose semisimple quotients are separable.

Suppose given a bimodule ${}_A M_B$ such that applying $-\otimes_A M$ componentwise induces an equivalence 
$\LL_A \xrightarrow{\sim} \LL_B$.
Let ${}_BN_A := \Hom_B(M,B)$. Then $M$ and $N$ induce a stable equivalence of Morita type between $A$ and $B$.
\end{LetterTheorem}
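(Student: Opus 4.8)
The plan is to reduce the statement to the criterion of Dugas and Mart\'inez-Villa recalled in the introduction, by first upgrading the hypothesis to the statement that $-\otimes_A M$ itself induces the stable equivalence and then checking the extra projectivity condition needed to apply that criterion. The starting point is to establish that $M$ is finitely generated projective on both sides. On the right this is essentially forced by the hypothesis: if $M_B$ were not projective, then $P\otimes_A M$ would fail to be a projective $B$-module for some projective $A$-module $P$, so applying $-\otimes_A M$ componentwise would not even send $\KKp{A}$ into $\KKp{B}$. On the left, I would apply the functor to Kato's complex attached to an arbitrary module $X\in\rmod A$; since in low (negative) degrees this complex agrees with a projective resolution of $X$, in those degrees its image computes $\Tor^A_i(X,M)$, and the conditions defining membership in $\LL_B$---which the image must satisfy---force $\Tor^A_i(X,M)=0$ for all $i>0$. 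Letting $X$ range over all finitely generated modules and using that $M$ is finitely generated, $M$ is flat, hence projective, as a left $A$-module.

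With $M$ projective on both sides, the functor $-\otimes_A M\colon\rmod A\to\rmod B$ is exact and preserves projectives, hence descends both to the stable categories and to $\KKp{A}$; moreover, since tensoring a projective resolution over $A$ with the flat bimodule $M$ is again a projective resolution over $B$, Kato's construction intertwines these two descents through the equivalences $\stmod A\simeq\LL_A$ and $\stmod B\simeq\LL_B$. The componentwise equivalence on $\LL$ of the hypothesis is therefore nothing but the restriction of $-\otimes_A M$, and it exhibits $-\otimes_A M\colon\stmod A\to\stmod B$ as a stable equivalence. Setting $N:=\Hom_B(M,B)$, projectivity of $M_B$ yields that $N$ is projective as a left $B$-module and that $-\otimes_B N\cong\Hom_B(M,-)$ is right adjoint to $-\otimes_A M$ on the module categories.

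What remains is the hypothesis of the Dugas--Mart\'inez-Villa criterion: that the relevant dual of $M$ be projective over $B$; equivalently, that $N=\Hom_B(M,B)$ be projective as a right $A$-module (in which case $N$ agrees up to projective bimodule summands with $\Hom_A(M,A)$). I would obtain this by running the first two paragraphs in reverse: the quasi-inverse equivalence $\LL_B\to\LL_A$ can be identified, via Kato's description together with the adjunction $(-\otimes_A M)\dashv\Hom_B(M,-)$, with the componentwise functor $-\otimes_B N$, which must then carry $\LL_B$ into complexes of projective $A$-modules; the $\Tor$-computation of the first paragraph, with the roles of $(A,M)$ and $(B,N)$ interchanged, then forces $N$ to be flat, hence projective, as a right $A$-module. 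With $M$ and $N$ both projective on both sides and $-\otimes_A M$, $-\otimes_B N$ mutually quasi-inverse stable equivalences, the separability assumption makes the Dugas--Mart\'inez-Villa criterion applicable and produces the desired stable equivalence of Morita type given by $M$ and $N$. I expect this last step to be the main obstacle: pinning down the quasi-inverse of the $\LL$-equivalence as a standard functor $-\otimes_B N$---essentially an Eilenberg--Watts-type statement for Kato's (non-triangulated) subcategories $\LL_A$ and $\LL_B$---is exactly what makes $\Hom_B(M,B)$ projective on the second side and brings the situation under the hypotheses of the cited theorem.
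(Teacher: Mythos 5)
Your outline matches the paper's proof on most of the steps. Establishing $M_B \in \proj B$ from the hypothesis that componentwise $-\otimes_A M$ lands in $\KKp B$, establishing ${}_A M$ projective by tensoring the resolution half $\tleq F_X^\bt$ of a Kato complex with $M$ and reading off $\Tor^A_{i}(X,M)=0$, observing ${}_B N = \Hom_B(M,B)$ is projective because $M_B$ is, noting $-\otimes_A M$ descends to a stable equivalence intertwined with the $\LL$-equivalence, and finally appealing to the Dugas--Mart\'inez-Villa criterion (in the form of \cref{Cor:ExactFunctorIsStableEquivOfMoritaType}) — all of this is exactly how the paper proceeds.

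The genuine gap is precisely where you suspected it: the projectivity of $N_A$. Your plan — identify the quasi-inverse of the $\LL$-equivalence with the componentwise functor $-\otimes_B N$ and then re-run the $\Tor$ computation — is circular as stated. To even formulate ``componentwise $-\otimes_B N$ restricted to $\LL_B$ lands in $\LL_A$'' you need $N_A$ to be projective (otherwise the image components are not projective $A$-modules and you are not even in $\KKp A$), which is what you are trying to prove. The adjunction $(-\otimes_A M)\dashv(-\otimes_B N)$ identifies $-\otimes_B N$ as the quasi-inverse on the stable categories, but translating that into the assertion that the quasi-inverse of the $\LL$-level equivalence acts componentwise by $-\otimes_B N$ requires knowing $F_Y^\bt\otimes_B N \simeq F_{Y\otimes_B N}^\bt$, which again presupposes $N_A$ projective. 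The paper sidesteps this entirely: instead of trying to describe the quasi-inverse, it exploits the \emph{other half} of the Kato complex. Using the elementary natural isomorphism $(P\otimes_A M)^\ast \simeq {}_B N\otimes_A P^\ast$ for $P\in\proj A$ (\cref{Lem:NatIsom_BimodulesHomDual}), the dualized truncation $\tau_{\geqslant -1}(F_X^\bt\otimes_A M)^\ast$ — which is a projective resolution of $\Tr(X\otimes_A M)$ since $F_X^\bt\otimes_A M\in\LL_B$ — is identified with $N\otimes_A\bigl(\tau_{\geqslant -1}F_X^{\bt,\ast}\bigr)$, i.e.\ with $N$ tensored over $A$ against the projective resolution of $\Tr(X)$. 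Its acyclicity then gives $\Tor^A_i(N,\Tr X)=0$ directly, with no need to know anything about the quasi-inverse, and letting $X$ range over $\rmod A$ (so $\Tr X$ ranges over the relevant left modules) forces $N_A$ flat, hence projective. In short: where you were planning an Eilenberg--Watts-type identification, the paper uses the built-in duality of $\LL$-complexes, which keeps the argument non-circular. You should replace the fourth paragraph of your plan by this dualization argument; the rest stands.
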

Moreover, every stable equivalence of Morita type arises in this way.
If $M$ and $N$ induce a stable equivalence of Morita type such that both have no projective bimodule as a direct summand,
then $- \otimes_A M$ induces an equivalence $\LL_A \to \LL_B$; cf.\ \cite[Theorem 6.5]{Nitsche_TriangulatedHull}.

As an application of \cref{IntThm:EquivOnL_MoritaType}, we give several new sufficient conditions, 
for when a stable equivalence induced by an exact functor is of Morita type. 
\begin{LetterTheorem}[\cref{Thm:StableEquivMoritaType-Overview}] \label{IntThm:ExactEquiv_MoritaType}
Let $A$ and $B$ be finite dimensional algebras whose semisimple quotients are separable.
Suppose given a bimodule $M$ which is projective as left $A$- and as right $B$-module such that 
$-\otimes_A M$ induces a stable equivalence $\stmod A \to \stmod B$.

Suppose one of the following conditions holds.
           \begin{itemize}           
           \item[$(i)$] The homology $\HH_k((F^\bt \otimes_A M)^\ast)$ vanishes for all $F^\bt \in \LL_A$ and $k \geqslant 0$.
           
           \item[$(ii)$] There exist natural isomorphisms $\nu_B(P \otimes_A M) \simeq \nu_A(P) \otimes_A M$ 
                         for all $P \in \proj A$.
                        
          \item[$(iii)$] There exists a natural isomorphism $M \otimes_B \Du\!B \simeq \Du\!A \otimes_A M$
                         of right $B$-modules.
           
           \item[$(iv)$] The algebras $A$ and $B$ have no nodes.
                        At least one of $A$ or $B$ has dominant dimension at least $1$ and finite representation type.
                        Moreover, for all simple $A$-modules $S$ whose injective hull is not projective, the image
                        $S \otimes_A M$ is an indecomposable $B$-module.
                        
           \item[$(v)$] The algebras $A$ and $B$ have dominant dimension at least $1$.
                        There is a bimodule ${}_B L_A$ which is projective as left $B$- 
                        and right $A$-module and which induces the inverse stable equivalence.
                        Moreover, for all simple $A$-modules $S$ whose injective hull is not projective, the image
                        $S \otimes_A M$ is an indecomposable $B$-module.
           \end{itemize}
Then $M$ and $\Hom_B(M,B)$ induce a stable equivalence of Morita type between $A$ and $B$.
\end{LetterTheorem}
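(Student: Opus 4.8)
The plan is to deduce all five cases from \cref{Thm:EquivOnLL-MoritaType}: it suffices to prove that, under each of $(i)$--$(v)$, applying $-\otimes_A M$ componentwise is a well-defined equivalence $\LL_A\xrightarrow{\sim}\LL_B$. Since $M_B$ is projective, $P\otimes_A M\in\proj B$ for every $P\in\proj A$, so componentwise tensoring defines a triangulated functor $G\colon\KKp A\to\KKp B$; since ${}_A M$ is flat, $G$ preserves acyclicity and $\HH_n(G(F^\bt))\simeq\HH_n(F^\bt)\otimes_A M$ for all $n$ and all $F^\bt$. Once $G(\LL_A)\subseteq\LL_B$ is known, $G|_{\LL_A}$ realizes the given stable equivalence $\overline{-\otimes_A M}$ under Kato's equivalences $\stmod A\simeq\LL_A$ and $\stmod B\simeq\LL_B$, hence is automatically an equivalence (cf.\ \cite{Nitsche_TriangulatedHull}). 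So everything reduces to verifying $G(\LL_A)\subseteq\LL_B$; under the correspondence between distinguished triangles and \emph{perfect exact sequences} this is the same as asking that $-\otimes_A M$ take perfect exact sequences to perfect exact sequences up to projective summands.

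For $F^\bt\in\LL_A$ the complex $G(F^\bt)$ already meets that part of the membership criterion for $\LL_B$ which concerns the homology of $G(F^\bt)$ as a complex of $B$-modules, because this is inherited from $F^\bt\in\LL_A$ since $G$ preserves homology; the only remaining requirement is the vanishing of $\HH_k((F^\bt\otimes_A M)^\ast)$ for $k\geqslant0$. Hence $(i)$ is precisely the assertion $G(\LL_A)\subseteq\LL_B$, and in that case the statement is immediate from \cref{Thm:EquivOnLL-MoritaType}.

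Next I would prove $(iii)\Rightarrow(ii)\Rightarrow(i)$. For $(iii)\Rightarrow(ii)$ one applies $P\otimes_A-$ to the isomorphism $M\otimes_B\Du B\simeq\Du A\otimes_A M$ and uses $\nu_A(P)\simeq P\otimes_A\Du A$ and $\nu_B(P\otimes_A M)\simeq(P\otimes_A M)\otimes_B\Du B$ for projective $P$. For $(ii)\Rightarrow(i)$ one observes that $\Hom_B(-,B)\simeq\Du\circ\nu_B$ on $\proj B$, so that termwise application of $(ii)$ gives $(F^\bt\otimes_A M)^\ast\simeq\Du(\nu_A(F^\bt)\otimes_A M)$ as complexes; computing $\HH_k$ and using flatness of ${}_A M$ together with $k$-duality reduces the vanishing of $\HH_k((F^\bt\otimes_A M)^\ast)$ for $k\geqslant0$ to the vanishing of $\HH_k((F^\bt)^\ast)$ for $k\geqslant0$, which is one of the defining properties of the objects of $\LL_A$.

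It remains to treat $(iv)$ and $(v)$, which is where the real work lies. Here one argues directly that $-\otimes_A M$ preserves perfect exact sequences up to projective summands, equivalently that $N=\Hom_B(M,B)$ is projective as a left $B$-module, so that \cref{Thm:EquivOnLL-MoritaType} (or the Dugas--Mart\'inez-Villa criterion) again applies. A perfect exact sequence is one that stays exact under $\Hom_A(-,A)$, and under dominant dimension at least $1$ the obstruction to preserving exactness under this functor is controlled by the simple modules with non-projective injective hull. Since a stable equivalence between algebras without nodes sends simple modules to stably simple modules (Auslander--Reiten, Mart\'inez-Villa), the hypothesis that $S\otimes_A M$ be indecomposable for each such simple $S$ identifies it as a simple $B$-module up to a projective summand; propagating this over all indecomposables then gives the required projectivity of $N$. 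In case $(iv)$ the finiteness needed for this propagation comes from the finite representation type of $A$ or $B$; in case $(v)$ it comes from the explicit inverse bimodule ${}_BL_A$, which lets one compare $M\otimes_B N$ with $A$ directly. The main obstacle is exactly this last step: transferring the homological information about simple modules and their injective hulls across the stable equivalence, where the ``no nodes'' and dominant-dimension hypotheses, together with the indecomposability assumption that upgrades ``stably simple'' to ``simple up to projectives'', are the essential inputs.
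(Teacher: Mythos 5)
Your top-level strategy is correct and matches the paper: everything is reduced to showing that $-\otimes_A M$ induces an equivalence $\LL_A\to\LL_B$, and then one invokes \cref{Thm:EquivOnLL-MoritaType}. Your handling of $(i)$, $(ii)$ and $(iii)$, including the chain $(iii)\Rightarrow(ii)\Rightarrow(i)$ and the fact that exactness of $-\otimes_A M$ takes care of the cohomological half of the $\LL_B$-membership criterion, is essentially identical to what the paper does via \cref{Cor:MoritaType-Iff-Nu}.

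However, your treatment of $(iv)$ and $(v)$ has genuine gaps. First, the parenthetical equivalence ``$-\otimes_A M$ preserves perfect exact sequences up to projective summands, equivalently $N=\Hom_B(M,B)$ is projective as a left $B$-module'' is wrong on two counts: $N$ is \emph{automatically} projective as a left $B$-module since $M_B\in\proj B$ and $\Hom_B(-,B)$ is an equivalence $\proj B\to B\text{-proj}$; the condition that actually matters is projectivity of $N$ as a \emph{right} $A$-module (cf.\ \cref{Cor:ExactFunctorIsStableEquivOfMoritaType}). And even after that correction, only one direction of the claimed equivalence is established in the paper (\cref{Lem:StableEquivMoritaType_PerfSeq}); the paper does not prove that preservation of perfect exact sequences implies projectivity of $N_A$, and it does not argue via $N_A$ at all.

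Second, and more substantively, you skip the mechanism by which the paper verifies $\HH_k\bigl((F^\bt\otimes_A M)^\ast\bigr)=0$ for $k\geqslant 0$ in cases $(iv)$ and $(v)$. Preservation of perfect exact sequences with projective middle term alone does \emph{not} directly give $\LL_B$-membership: what the paper does (\cref{Prop:ExactStableEquiv_PerfSeqDomDim_IsMoritaType}) is use Yoshino's exact sequence (\cref{Thm:SequenceWith_H(*)-H()*}) to split the required vanishing into two pieces, namely $\Ext_B^1\!\bigl(\Cok(d_{F\otimes M}^k),B\bigr)=0$ (which comes from preservation of perfect exact sequences with projective middle term, via \cref{Prop:ExactsStableEquiv_ProjMiddleTerm}) and $\bigl(\HH^k(F^\bt\otimes_A M)\bigr)^\ast=0$ (which is proved by induction on length using $\ddim\geqslant 1$, \cref{Lem:nu(X)=0-iff-nu(S)=0}, and the simple-module hypothesis to handle composition factors). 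Your ``propagating this over all indecomposables'' hides exactly this decomposition and the length induction. Furthermore, in case $(iv)$ the input ``preservation of perfect exact sequences'' is not free: it comes from \cref{Cor:StableEquivPreservePerfSeq}, whose proof is the entire algorithmic content of \cref{Sec:PES_StableEquivalences} (\cref{Thm:PerfectSeqUnderStableEquiv}); you do not explain how finite representation type enters. In case $(v)$, your description of ``comparing $M\otimes_B N$ with $A$ directly'' via $L$ does not correspond to what happens; the inverse exact functor is used only to establish preservation of perfect exact sequences with projective middle term via \cref{Prop:ExactsStableEquiv_ProjMiddleTerm}, and to promote $S\otimes_A M$ to simple-plus-projective via \cref{Lem:ExactStableEquiv_SimpleModules}.
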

Since a stable equivalence of Morita type is induced by an exact functor between the module categories,
it preserves short exact sequences. However, it even preserves a stronger class of short exact sequences,
called perfect exact sequences; cf.\ \cref{Lem:StableEquivMoritaType_PerfSeq}.
Thereby, a short exact sequence is said to be perfect exact if the induced sequence under the 
functor $\Hom_A(-,A)$ is exact as well. 
The equivalence $\stmod A \to \LL_A$ introduced by Kato provides a correspondence between the distinguished triangles in 
$\KKp A$ and the perfect exact sequences in $\rmod A$; cf.\ \cite[Proposition 3.6]{Kato_Mono}. 
In particular, the shift of a complex in $\LL_A$ is again an element of $\LL_A$ if and only if there exists a
perfect exact sequence with projective middle term starting in the corresponding module.
The main idea for the proof of \cref{IntThm:ExactEquiv_MoritaType} is to study when a stable equivalence 
preserves perfect exact sequences up to projective direct summands and then apply \cref{IntThm:EquivOnL_MoritaType}.

Moreover, the equivalence $\stmod A \to \LL_A$ restricts to the known triangulated equivalence
between the category of stable Gorenstein-projective modules $\stgproj A$ and the category of totally acyclic modules;
cf.\ \cref{Lem:InducedEquiv_GorensteinKtac}. We have the following main result about perfect exact sequences.
\begin{LetterTheorem}[\cref{Cor:StableEquivPreservePerfSeq}, \cref{Thm:InducedEquivOnGorensteinProj}]
 \label{IntThm:StEquiv_PreservePerfSeq}
Let $\alpha : \stmod A \to \stmod B$ be a stable equivalence between finite dimensional algebras without nodes.
Consider the following conditions.
\begin{itemize}
\item[(1)] Let $0 \rightarrow X \xrightarrow{f} Y \oplus P \xrightarrow{g} Z \rightarrow 0$ be a perfect exact sequence
in $\rmod A$ without split summands where $P \in \proj A$ and $Y$ has no projective direct summand.
Then there exists a perfect exact sequence
\[
0 \rightarrow \alpha(X) \xrightarrow{\tilde{f}} \alpha(Y) \oplus \tilde{P} \xrightarrow{\tilde{g}} \alpha(Z) \rightarrow 0
\]
in $\rmod B$ with $\tilde{P} \in \proj B$ such that $\underline{\tilde{f}} \simeq \alpha(\underline{f})$ 
and $\underline{\tilde{g}} \simeq \alpha(\underline{g})$.

\item[(2)] The equivalence $\alpha$ induces a triangulated equivalence $\stgproj A \to \stgproj B$. \rule{0pt}{7mm}
\end{itemize}
If condition (1) holds for $\alpha$ and its quasi-inverse, condition (2) holds as well.
If $A$ and $B$ have finite representation type, both conditions hold.
\end{LetterTheorem}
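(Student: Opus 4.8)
The plan is to transport everything through the equivalence $K_A\colon\stmod A\xrightarrow{\ \sim\ }\LL_A$ of Kato into the homotopy category $\KKp A$, where the relevant structure becomes triangulated, and to reformulate condition $(1)$ accordingly. I would write $\beta:=K_B\circ\alpha\circ K_A^{-1}\colon\LL_A\xrightarrow{\ \sim\ }\LL_B$ for the induced equivalence of additive categories. By the cited property of $\LL_A$, an object $F\in\LL_A$ with associated module $M$ satisfies $F[1]\in\LL_A$ exactly when there is a perfect exact sequence $0\to M\to Q\to M'\to 0$ with $Q$ projective, and then $M'$ is the module associated to $F[1]$. Such a sequence has projective middle term, so --- after discarding its split summands --- it is of the shape occurring in $(1)$ with $Y=0$; thus condition $(1)$ says precisely that whenever $F[1]\in\LL_A$ one has $\beta(F)[1]\in\LL_B$ together with an isomorphism $\beta(F[1])\simeq\beta(F)[1]$, and the compatibility clause $\underline{\tilde f}\simeq\alpha(\underline f)$, $\underline{\tilde g}\simeq\alpha(\underline g)$ makes this isomorphism compatible with morphisms. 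In short, condition $(1)$ for $\alpha$ says that $\beta$ is compatible with the (partially defined) shift on the categories $\LL_A$ and $\LL_B$.

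\textbf{From $(1)$ to $(2)$.} Assume $(1)$ holds for $\alpha$ and for its quasi-inverse. Since $K_A$ restricts to an equivalence $\stgproj A\simeq\KKptac A$ (\cref{Lem:InducedEquiv_GorensteinKtac}), a module $M$ is stably Gorenstein-projective iff $K_A(M)$ is totally acyclic, which (using a minimal representative) means there is a family of perfect exact sequences $0\to M_i\to Q_i\to M_{i+1}\to 0$, $i\in\ZZ$, with $Q_i$ projective and $M_0=M$ --- obtained by splicing the complex, each piece being perfect because $M_{i+1}$ is Gorenstein-projective and hence $\Ext^1_A(M_{i+1},A)=0$, and each without split summands by minimality. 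Applying condition $(1)$ to each (with $Y=0$) gives perfect exact sequences $0\to\alpha M_i\to\widetilde Q_i\to\alpha M_{i+1}\to 0$ in $\rmod B$ with $\widetilde Q_i$ projective; since their maps are genuine monomorphisms and epimorphisms they splice to an acyclic complex $\widetilde G$ of projective $B$-modules, and the perfectness of the pieces makes $\Hom_B(\widetilde G,B)$ acyclic too, so $\widetilde G$ is totally acyclic; since $\alpha M$ occurs as one of its cocycles, $\alpha M\in\stgproj B$. Thus $\alpha(\stgproj A)\subseteq\stgproj B$, and the symmetric argument with the quasi-inverse gives the reverse inclusion, so $\alpha$ restricts to an equivalence $\stgproj A\to\stgproj B$. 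Finally this restriction is triangulated: $\stgproj A$ is the stable category of the Frobenius category $\Gproj A$, whose projective--injective objects are the projective modules, so its suspension is the cosyzygy $\Sigma M=\Cok(M\hookrightarrow Q)$ and its triangles are induced by short exact sequences of Gorenstein-projective modules, which are automatically perfect exact. Applying $(1)$ to $0\to M\to Q\to\Sigma M\to 0$ yields a natural isomorphism $\alpha\circ\Sigma_A\simeq\Sigma_B\circ\alpha$ (naturality from the compatibility clause), and applying $(1)$ to a conflation of Gorenstein-projectives yields a perfect exact sequence in $\rmod B$ with Gorenstein-projective terms (the middle one because $\Gproj B$ is closed under extensions) realising $\alpha$ of the given stable maps, i.e.\ a distinguished triangle isomorphic to the image of the original one. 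Hence $(2)$ holds.

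\textbf{The representation-finite case.} Assume $A$ and $B$ have finite representation type. As $\alpha^{-1}$ is again a stable equivalence between representation-finite algebras without nodes, by the previous step it is enough to verify condition $(1)$ for such an $\alpha$. Let $0\to X\xrightarrow{f}Y\oplus P\xrightarrow{g}Z\to 0$ be perfect exact without split summands, $P$ projective, $Y$ without projective summand. Since $A$ and $B$ have no nodes, the classical theory of stable equivalences between algebras without nodes (Auslander--Reiten) applies: $\alpha$ is compatible with syzygies, cosyzygies and the Auslander--Reiten translation up to projectives, and from this one realises $\alpha(\underline f),\alpha(\underline g)$ by an honest short exact sequence $0\to\alpha X\xrightarrow{\tilde f}\alpha Y\oplus\widetilde P\xrightarrow{\tilde g}\alpha Z\to 0$ with $\widetilde P$ projective. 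What remains is to see this transported sequence is again \emph{perfect}, i.e.\ that the connecting map $(\alpha X)^{\ast}\to\Ext^1_B(\alpha Z,B)$ vanishes given that $X^{\ast}\to\Ext^1_A(Z,A)$ does; representation-finiteness enters exactly here, through the fact that over such an algebra every module has only finitely many non-isomorphic syzygies, which bounds the homological data involved and --- together with the identification of Auslander--Reiten translates under $\alpha$ --- lets one transfer the vanishing along $\alpha$. Once $(1)$ is known, $(2)$ follows from the previous step, so both conditions hold. I expect this last point --- transferring perfectness, i.e.\ controlling how $\alpha$ fails to commute with $\Hom_A(-,A)$ --- to be the main obstacle; it is precisely the difficulty that the additional hypotheses of the preceding theorem circumvent, and here the finiteness hypothesis takes their place.
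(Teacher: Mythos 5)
Your argument that condition $(1)$ for $\alpha$ and its quasi-inverse implies condition $(2)$ is essentially correct and follows the same path as the paper: perfect exact sequences with projective middle term correspond to the shift in $\LL_A$ via \cref{Prop:PerfSeq-DistTriang} and \cref{Lem:Ext=0_PerfectSeq}; a module is stably Gorenstein-projective iff all its shifts stay in $\LL_A$ (\cref{Lem:CharacterisationGorensteinViaShift}); transporting these shifts shows $\alpha$ restricts to $\stgproj A\to\stgproj B$; and applying $(1)$ to the syzygy sequence and to a general conflation of Gorenstein-projectives shows the restriction commutes with the suspension and sends triangles to triangles. (The paper does the same thing via \cref{Lem:ShiftUnderStableEquiv} and \cref{Thm:InducedEquivOnGorensteinProj}, transporting the shift degree by degree rather than splicing a whole complex in one go, but these are the same argument in different clothing.) One small point worth spelling out: for the splicing you should note that a minimal totally acyclic complex has cocycles without projective summands, so the pieces $0\to M_i\to Q_i\to M_{i+1}\to 0$ really are ``without split summands'' and $(1)$ applies.

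The representation-finite case is where the proposal has a genuine gap, and you flag it yourself. What you need is precisely \cref{Cor:StableEquivPreservePerfSeq}, whose proof is the bulk of Section~3, and the mechanism is not the one you gesture at. The statement that a representation-finite algebra has finitely many non-isomorphic syzygies does not by itself let you ``transfer the vanishing'' of the connecting map $(\alpha X)^\ast\to\Ext^1_B(\alpha Z,B)$; indeed, a general stable equivalence does not commute with $\Hom_A(-,A)$ in any controllable way, which is exactly why the theorem is non-trivial. The paper's actual proof (\cref{Constr:EtaTildeFromEta}, \cref{Lem:ConstructPerfSeqViaARSeq}, \cref{Lem:f_k_FiniteType}, \cref{Lem:ReducePerfSeqToARSeq}, \cref{Lem:PerfSeqUnderStableEquiv_SingleStep}, \cref{Thm:PerfectSeqUnderStableEquiv}) proceeds by an explicit reduction: starting from $\eta_0$, one iteratively forms pushouts along the sum of almost split sequences out of the current left term (\cref{Lem:PerfectSeq-Via-PoPb}) and strips off split summands, producing a chain $\eta_0,\eta_1,\dots$. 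The role of representation-finiteness is that every morphism has \emph{finite depth} in the radical filtration of $\rmod A$ (\cref{Rem:Radiacl_Properties}); this, not a bound on syzygies, is what forces the chain to terminate after finitely many steps in a direct sum of almost split sequences (\cref{Lem:ReducePerfSeqToARSeq}). Since almost split sequences with non-projective start are perfect exact and preserved by stable equivalences between algebras without nodes (\cref{Prop:AssUnderStableEquiv}), one then rebuilds the image sequence on the $B$-side step by step via the snake-lemma construction \cref{Lem:PerfectSeq-Via-Snake}. \cref{Ex:Kronecker} shows why the finite-depth hypothesis (and not merely ``finiteness of homological invariants'') is the right assumption: over the Kronecker algebra there are perfect exact sequences for which the reduction never terminates. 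Without supplying this reduction, the claim that representation-finiteness implies condition $(1)$ remains unproved in your write-up.
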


By excluding algebras with nodes, stable equivalences preserve most short exact sequences and almost split sequences 
up to projective direct summands; cf. \cite{AuslanderReitenVI} and \cite{MartinezVilla_Properties}.
Recall that nodes are non-projective, non-injective simple modules $S$
where the middle term of the almost split sequence starting in $S$ is projective.
The following result provides sufficient conditions for a stable equivalence to preserve perfect exact sequences.
These conditions are always satisfied for representation finite algebras without nodes, 
as in \cref{IntThm:StEquiv_PreservePerfSeq}.
We say that a morphism $f : X \to Y$ in $\rmod A$ has finite depth, if $f \not\in \rad^n(X,Y)$ for some 
$n \in \ZZ_{\geqslant 1}$. See \cref{Def:Radial-of-Cat} for more details.
\begin{LetterTheorem}[\cref{Thm:PerfectSeqUnderStableEquiv}] \label{IntThm:StEquiv-PerfSeq}
Let $\alpha : \stmod A \to \stmod B$ be a stable equivalence.

Suppose given a perfect exact sequence $0 \rightarrow X \xrightarrow{f} Y \oplus P \xrightarrow{g} Z \rightarrow 0$ 
without split summands where $X$ has no node as a direct summand, $P \in \proj A$ and $Y$ has no projective direct summand.

Suppose that $f\,p$ and $g\,\pi$ have finite depth for every projection $p$ onto an indecomposable direct summand of $Y$
and every projection $\pi$ onto an indecomposable direct summand of $Z$.
Then there exists a perfect exact sequence 
\[
0 \rightarrow \alpha(X) \xrightarrow{\tilde{f}} \alpha(Y) \oplus \tilde{P} \xrightarrow{\tilde{g}} \alpha(Z) \rightarrow 0
\]
in $\rmod B$ with $\tilde{P} \in \proj B$ such that $\underline{\tilde{f}} \simeq \alpha(\underline{f})$ 
and $\underline{\tilde{g}} \simeq \alpha(\underline{g})$.
\end{LetterTheorem}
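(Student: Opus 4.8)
The plan is to translate the perfect exact sequence into the homotopy category via Kato's equivalence, transport the resulting triangle along $\alpha$, and translate back, keeping careful track of projective summands. By the correspondence of \cite[Proposition 3.6]{Kato_Mono} (see also the discussion preceding \cref{IntThm:StEquiv_PreservePerfSeq}), a perfect exact sequence $0 \to X \xrightarrow{f} Y \oplus P \xrightarrow{g} Z \to 0$ with $P$ projective yields, under the equivalence $\stmod A \to \LL_A$, a distinguished triangle $F_X^\bt \to F_{Y}^\bt \to F_Z^\bt \to F_X^\bt[1]$ in $\KKp A$ whose terms are (up to homotopy) the complexes attached to $X$, $Y$, $Z$, and in which the fact that $P$ is projective is reflected by $F_Z^\bt[-1]$, or rather the cone data, again lying in $\LL_A$. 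The stable equivalence $\alpha$ carries these three objects to $\alpha(X), \alpha(Y), \alpha(Z) \in \stmod B$, hence to complexes $F_{\alpha(X)}^\bt, F_{\alpha(Y)}^\bt, F_{\alpha(Z)}^\bt \in \LL_B$; the point is to produce a distinguished triangle among them lifting $\alpha(\underline f)$ and $\alpha(\underline g)$, and then read off from it a perfect exact sequence in $\rmod B$ with projective summand $\tilde P$.

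First I would recall that, because $X$ has no node as a direct summand and the sequence has no split summands, results of Auslander--Reiten and Mart\'inez-Villa (\cite{AuslanderReitenVI}, \cite{MartinezVilla_Properties}) give that $\alpha$ and its quasi-inverse send the relevant short exact sequences to short exact sequences up to projective direct summands; this is what manufactures an honest exact sequence $0 \to \alpha(X) \to \alpha(Y) \oplus \tilde P \to \alpha(Z) \to 0$ in $\rmod B$ realizing $\alpha(\underline f)$ and $\alpha(\underline g)$ after stabilization. The finite-depth hypotheses on $f\,p$ and $g\,\pi$ enter precisely here: to control the non-functoriality of $\alpha$ on non-stable data one needs that the components of $f$ and $g$ along indecomposable summands do not lie in $\bigcap_n \rad^n$, so that (via \cref{Def:Radial-of-Cat}) one can choose compatible non-stable lifts and argue that the comparison maps are isomorphisms rather than merely stable isomorphisms. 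Next I would verify that the exact sequence so obtained is \emph{perfect} exact, i.e.\ that $\Hom_B(-,B)$ keeps it exact. For this I would pass back through Kato's correspondence: the triangle in $\KKp B$ built from $\alpha(X), \alpha(Y)\oplus\tilde P, \alpha(Z)$ automatically corresponds to a perfect exact sequence, so it suffices to check that the exact sequence I constructed by hand coincides with the one coming from that triangle, which reduces to matching the connecting data — again controlled by the finite-depth assumption.

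The main obstacle I expect is exactly this bookkeeping of projective direct summands together with the passage between stable and non-stable morphisms. A stable equivalence is only defined up to projectives and only functorial on $\stmod$, so lifting the triangle $\alpha(F_X^\bt) \to \alpha(F_Y^\bt) \to \alpha(F_Z^\bt) \to$ (which lives in $\KKp B$ and is distinguished because $\alpha$, composed with the two Kato equivalences, is an exact functor between the $\LL$'s, using \cref{Thm:EquivOnLL-MoritaType} only in spirit) to a genuine perfect exact sequence in $\rmod B$ with the prescribed shape $\alpha(Y)\oplus\tilde P$ requires showing no extra projective summand is spuriously created or destroyed in $\alpha(X)$ or $\alpha(Z)$, and that the map into $\alpha(Y)$ has no component factoring through $\tilde P$ that would split off. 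The finite-depth condition is the technical device that makes these choices rigid; the bulk of the proof is the verification that the resulting $\tilde f, \tilde g$ are mono, epi, and $\Hom_B(-,B)$-exact with $\underline{\tilde f} \simeq \alpha(\underline f)$, $\underline{\tilde g} \simeq \alpha(\underline g)$.
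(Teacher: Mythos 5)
Your proposal and the paper's proof diverge fundamentally, and yours has a circular step. The crux is your sentence that the triangle
\[
F_{\alpha(X)}^\bt \to F_{\alpha(Y)}^\bt \to F_{\alpha(Z)}^\bt \to
\]
in $\KKp B$ ``is distinguished because $\alpha$, composed with the two Kato equivalences, is an exact functor between the $\LL$'s.'' That is exactly what has to be proven, and it is not automatic. A stable equivalence $\alpha$ induces an equivalence of (mere) categories $\LL_A \to \LL_B$ via \cref{Thm:Equivalence_F}, but $\LL_A$, $\LL_B$ are not triangulated subcategories of the homotopy category (they are not even shift-closed in general), and nothing forces this induced equivalence to carry a distinguished triangle of $\KKp A$ to a distinguished triangle of $\KKp B$. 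By \cref{Prop:PerfSeq-DistTriang}, ``the triangle on $\alpha(X),\alpha(Y),\alpha(Z)$ is distinguished'' is \emph{equivalent} to ``there is a perfect exact sequence $0 \to \alpha(X) \to \alpha(Y)\oplus\tilde P \to \alpha(Z) \to 0$,'' which is the conclusion you are trying to establish. So this step assumes what it should prove. Likewise, the Auslander--Reiten/Mart\'inez-Villa results you invoke (\cref{Prop:AssUnderStableEquiv}) produce a \emph{short} exact sequence in $\rmod B$ realizing $\alpha(\underline f),\alpha(\underline g)$, but say nothing about perfect exactness; the whole content of the theorem is the upgrade from ``short exact'' to ``perfect exact.''

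The paper's actual argument is quite different and avoids this circularity by an algorithmic reduction to almost split sequences. In \cref{Constr:EtaTildeFromEta} one iteratively replaces $\eta_0$ by $\eta_{k+1}$, pushing out along the (sum of) almost split sequence(s) $0 \to X_k \to E_{X_k} \to T(X_k) \to 0$ starting at $X_k$ (via \cref{Lem:PerfectSeq-Via-PoPb}) and discarding split summands. \cref{Lem:ReducePerfSeqToARSeq} shows this terminates at a direct sum of almost split sequences $\eta_l$ precisely because of the finite-depth hypotheses: the condition on $g\,\pi$ forces the surjections $\pi_0\cdots\pi_k : Z_0 \to Z_{k+1}$ to eventually vanish (their composite otherwise would lie in $\rad^n$ for all $n$), and the condition on $f\,p$ (propagated to all $f_k$ by \cref{Lem:f_k_FiniteType}) rules out the other escape route. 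Then one inducts \emph{downward} from $l$ to $0$: almost split sequences are preserved (\cref{Prop:AssUnderStableEquiv}, and they are perfect exact), and \cref{Lem:PerfSeqUnderStableEquiv_SingleStep} uses the snake-lemma construction of \cref{Lem:PerfectSeq-Via-Snake} to glue the transported almost split sequence with the transported $\tilde\eta_k$ and recover a perfect exact sequence realizing $\alpha(\underline{f_k}),\alpha(\underline{g_k})$. So the finite-depth hypotheses are not there to ``control the non-functoriality of $\alpha$'' or to rigidify lifts; they ensure termination of the reduction (and \cref{Ex:Kronecker} shows this genuinely fails without them). You would need to replace the central triangle-transport claim with an argument of this kind.
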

While the assumption on the depth of $f$ and $g$ is necessary for our proof, it seems unclear whether a similar
result holds in a more general setting.

This article is structured as follows. In \cref{Sec:Preliminaries}, we recall some necessary notation and the construction
of the category $\LL_A$. In \cref{Sec:PerfectExactSequences}, we recall the definition of perfect exact sequences and
give some basic properties. In \cref{Sec:PES_StableEquivalences}, we show that certain perfect exact sequences are
preserved by a stable equivalence. This is applied in \cref{Sec:Gorenstein} to the category of stable Gorenstein-projective
modules. Finally, we consider stable equivalences of Morita type in \cref{Sec:StEquivMoritaType}.
First, we show the result of \cref{IntThm:EquivOnL_MoritaType}, that certain equivalences on the level of the 
category $\LL_A$ induce a stable equivalence of Morita type.
Afterwards, we use this result and the results of \cref{Sec:PES_StableEquivalences} to prove \cref{IntThm:ExactEquiv_MoritaType}.

\section{Preliminaries}
\label{Sec:Preliminaries}

Let $A$ and $B$ be finite dimensional $k$-algebras over a field $k$.
We assume that $A$ and $B$ do not have any semisimple direct summands.

Let $\rmod A$ be the category of finitely generated right $A$-modules.
We write $\proj A$ for the full subcategory of projective $A$-modules and
$\PP_A$ for the full subcategory of projective-injective $A$-modules.
If not specified otherwise, modules are finitely generated right modules.

Given morphisms $f : X \to Y$ and $ g: Y \to Z$ in a category, we denote the composite of $f$ and $g$ by $fg : X \to Z$.
On the other hand, we write $\GG \circ \FF$ for the composite of two functors
$\FF : \mathcal{C} \to \mathcal{D}$ and $\GG : \mathcal{D} \to \mathcal{E}$ between categories.
We write $\Du(-) := \Hom_k(-,k) : \rmod A \to A$-mod for the $k$-duality.
Recall that the functor $(-)^\ast := \Hom_A(-,A) : \rmod A \to A$-mod restricts to an equivalence
$(-)^\ast : \proj A \to A$-proj. The Nakayama functor $\nu_A : \rmod A \to \rmod A$
is given by $\nu_A := \Du\Hom_A(-,A)$.

The \textit{stable module category} $\stmod A$ is the category with the same objects as $\rmod A$
and with morphisms $\underline{\Hom}_A(X,Y) := \Hom_A(X,Y)/\mathrm{PHom}_A(X,Y)$ for $X,Y \in \rmod A$.
A morphism $f : X \to Y$ is an element of $\mathrm{PHom}_A(X,Y)$ if there exists a $P \in \proj A$ 
such that $f$ factors through $P$. We say that $A$ and $B$ are stably equivalent, if there exists an equivalence
$\stmod A \to \stmod B$.
We recall the definition of stable equivalences of Morita type.
\begin{Definition}[Brou\'e] \label{Def:StEquivMoritaType}
Let ${}_A M_B$ and ${}_B N_A$ be bimodules such that ${}_A M$, $M_B$, ${}_B N$ and $N_A$ are projective.
We say that $M$ and $N$ induce a \textit{stable equivalence of Morita type} if  
\[ 
{}_A M \otimes_B N_A \simeq A \oplus P \text{ and } {}_B N \otimes_A M_B \simeq A \oplus Q
\]
as bimodules such that ${}_A P_A$ and ${}_B Q_B$ are projective bimodules.
\end{Definition}

We introduce some notation for the category of chain complexes $\CC A$.
An element $F^\bt = (F^k)_{k \in \ZZ}$ in $\CC A$ will be written as a cochain complex with differential 
$(d^k)_{k \in \ZZ} := (d_F^k)_{k \in \ZZ}$ as follows.
\[
\cdots \rightarrow F^{-2} \xrightarrow{d^{-2}} F^{-1} \xrightarrow{d^{-1}} F^0 \xrightarrow{d^0} F^1 \xrightarrow{d^1} F^2 \rightarrow \cdots
\]
We denote the cohomology of $F^\bt$ in degree $k \in \ZZ$ by $\HH^k(F^\bt) := \Ker d^k/\im d^{k-1}$.
Truncation $\tau_{\leqslant n}\, F^\bt$ of $F^\bt$ is defined as follows for $n \in \ZZ$.
\[
\cdots \rightarrow F^{n-2} \xrightarrow{d^{n-2}} F^{n-1} \xrightarrow{d^{n-1}} F^n \rightarrow 0 \rightarrow 0 \rightarrow \cdots
\]
We often abbreviate $F^{\leqslant n} := \tau_{\leqslant n}\, F^\bt$ and similarly for 
$F^{\geqslant n} := \tau_{\geqslant n}\, F^\bt$.
The equivalence $(-)^\ast$ can be extended by componentwise application to an equivalence 
\[
(-)^\ast : \CCp A \to \CCpleft A : F^\bt \to F_\bt^\ast = F^{\bt, \ast}.
\]
Here, we write $(F_k^\ast)_{k \in \ZZ} := (F^{k,\ast})_{k \in \ZZ} := \left((F^k)^\ast\right)_{k \in \ZZ}$ 
as the chain complex with the differentials given by
$d^{F^\ast}_k := d_F^{k,\ast} := \left(d_F^k\right)^\ast$ for $k \in \ZZ$.
\[
\cdots \rightarrow F_2^\ast \xrightarrow{d_1^\ast} F_1^\ast \xrightarrow{d_0^\ast} F_0^\ast
		 \xrightarrow{d_{-1}^\ast} F_{-1}^\ast \xrightarrow{d_{-2}^\ast} F_{-2}^\ast \rightarrow \cdots
\]
We denote the homology of $F_\bt^\ast$ in degree $k \in \ZZ$ by $\HH_k(F_\bt^\ast) = \ker(d_{k-1}^\ast)/\im(d_k^\ast)$.
In this sense, we use both chain complexes and cochain complexes in our notation.
However, we reserve the notation of chain complexes for dualized cochain complexes.

Let $\KKp A$ be the unbounded homotopy category of projective modules.
A complex $F^\bt \in \KKp A$ is said to be \textit{totally acyclic,} 
if $\HH^k(F^\bt) = 0$ and $\HH_k(F_\bt^\ast) = 0$ for all $k \in \ZZ$.
The full subcategory of totally acyclic complexes in $\KKp A$ is denoted by $\KKptac A$.

In \cite{Kato_Kernels}, Kato constructs an equivalence $\stmod A \to \LL_A$ between the stable module category and
a full subcategory $\LL_A$ of $\KKp A$.
\begin{Definition}[Kato] \label{Def:L-Category}
Let $\LL_A$ be the full subcategory of $\KKp{A}$ defined as follows.
\[
\LL_A = \{ F^\bt \in \KKp{A} \;\vert\; \mathrm{H}^{<0}(F^\bt) = 0, \, \mathrm{H}_{\geqslant0}(F^\ast_\bt) = 0 \}
\]
\end{Definition}

Suppose given $X \in \rmod A$. We sketch the construction of a complex $F_X^{\bt} \in \LL_A$.
See \cite[Lemma 2.9]{Kato_Mono} for more details.

We first let $\tleq F_X^\bt$ be the minimal projective resolution of $X$.
Next, we complete $F_X^{0,\ast} \to F_X^{-1,\ast}$ to the minimal projective resolution $\tau_{\geqslant -1} F_X^{\bt, \ast}$ 
of $\Tr(X)$. Splicing these two projective resolutions together gives a complex $F_X^{\bt} \in \LL_A$. 
In particular, $\tau_{\geqslant 1} F_X^{\bt, \ast}$ is the minimal projective resolution of $X^\ast$. 
Note that $F_X^0$ is the projective cover of $X$ and $F_X^{1,\ast}$ the projective cover of $X^\ast$.
If $X$ is simple, $\nu(F_X^0)$ is the injective hull of $X$.

For $X \xrightarrow{f} Y$ in $\rmod A$, we lift $f$ to a morphism between projective resolutions 
$\tleq F_X^\bt \to \tleq F_Y^\bt$ in non-negative degrees. Similarly, we lift $f^\ast$ to a morphism
$\tau_{\geqslant -1} F_X^{\bt, \ast} \to \tau_{\geqslant -1} F_X^{\bt, \ast}$ such that the lifts coincide in degree $-1$
and $0$. Together, this gives a a morphism $f^\bt : F_X^\bt \to F_Y^\bt$ in $\LL_A$.

In the setting of commutative rings, it was shown in \cite[Theorem 2.6]{Kato_Kernels} 
that this defines an equivalence $\stmod A \to \LL_A$. However, the same arguments given there work 
for arbitrary finite dimensional algebras.
In the future, we will often use this equivalence without further comment.
\begin{Theorem}[Kato] \label{Thm:Equivalence_F}
The mapping $X \mapsto F_X^\bt$ defines a functor $\FF : \stmod A \to \KKp A$. 
The functor $\FF$ restricts to an equivalence
\[
\FF : \stmod A \xrightarrow{\sim} \LL_A
\]
with quasi-inverse $\HH^0(\tleq(-)) : \LL_A \to \stmod A$.
\end{Theorem}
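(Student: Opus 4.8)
The plan is to check that $\FF$ is a well-defined functor $\stmod A \to \KKp A$ whose image lies in $\LL_A$, to recognise $\GG := \HH^0(\tleq(-))$ as a functor $\KKp A \to \stmod A$ with $\GG \circ \FF \cong \id_{\stmod A}$, and then to prove that $\FF$ is fully faithful and essentially surjective onto $\LL_A$, so that $\GG|_{\LL_A}$ is the claimed quasi-inverse. This follows the proof of \cite[Theorem 2.6]{Kato_Kernels}; the only feature of finite-dimensional algebras used beyond what is used there for commutative Noetherian local rings is that $A$ is semiperfect, so that minimal projective resolutions exist and are unique, and every projective resolution is the minimal one together with a contractible complex.

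\emph{$\FF$ is a functor into $\LL_A$.} First, $F_X^\bt \in \LL_A$: by construction $\tleq F_X^\bt$ is the minimal projective resolution of $X$, so $\HH^{<0}(F_X^\bt) = 0$; dually, $\tau_{\geqslant -1} F_X^{\bt,\ast}$ is the minimal projective resolution of $\Tr X = \Cok\big((d^{-1}_{F_X})^\ast\big)$, so $\HH_k(F_X^{\bt,\ast}) = 0$ for all $k \geqslant 0$. On morphisms, a map $f : X \to Y$ lifts to a chain map $\tleq F_X^\bt \to \tleq F_Y^\bt$, unique up to homotopy because the source is a complex of projectives and the target a resolution; the induced map on transposes lifts likewise to $\tau_{\geqslant -1} F_Y^{\bt,\ast} \to \tau_{\geqslant -1} F_X^{\bt,\ast}$, and the two lifts can be taken to agree in degrees $-1$ and $0$, since both restrict there to a lift of $f^0 : F_X^0 \to F_Y^0$ along the projective covers $F_X^0 \twoheadrightarrow X$ and $F_Y^0 \twoheadrightarrow Y$. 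Standard diagram chases then show that the assembled chain map $f^\bt : F_X^\bt \to F_Y^\bt$ is independent of the choices up to homotopy, respects identities and composition, and is null-homotopic whenever $f$ factors through a projective module (feeding a factorisation $X \to P \to Y$ through the one-term complex $P$ into the comparison on both halves produces a contracting homotopy). Hence $\FF$ is a functor $\stmod A \to \KKp A$ with $\FF(\stmod A) \subseteq \LL_A$.

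\emph{$\GG$ and the relation $\GG \circ \FF \cong \id$.} The assignment $G^\bt \mapsto \HH^0(\tleq G^\bt) = \Cok(d_G^{-1})$ is a functor $\CCp A \to \rmod A$; it descends to $\KKp A \to \stmod A$ because, if $\phi$ and $\psi$ are homotopic chain maps $G^\bt \to H^\bt$ via $(h^k)_k$, the induced difference on cokernels is the composite $\Cok(d_G^{-1}) \xrightarrow{\,\overline{d_G^0}\,} G^1 \xrightarrow{\,\overline{h^1}\,} \Cok(d_H^{-1})$, which factors through the projective module $G^1$ and hence vanishes in $\stmod A$. Since $\tleq F_X^\bt$ is the minimal projective resolution of $X$, we get $\GG(F_X^\bt) = X$, and since $\tleq f^\bt$ is a lift of $f$, we get $\GG(f^\bt) = \underline f$; thus $\GG \circ \FF = \id_{\stmod A}$ on the nose, and in particular $\FF$ is faithful.

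\emph{Essential surjectivity and fullness.} Let $G^\bt \in \LL_A$. Since $\HH^{<0}(G^\bt) = 0$, the truncation $\tleq G^\bt$ is a projective resolution of $X := \HH^0(\tleq G^\bt) = \GG(G^\bt)$, hence isomorphic in $\CCp A$ to $\tleq F_X^\bt$ plus a contractible complex; dually, since $\HH_{\geqslant 0}(G^{\bt,\ast}) = 0$, the truncation $\tau_{\geqslant -1} G^{\bt,\ast}$ is a projective resolution of $\Cok\big((d_G^{-1})^\ast\big)$, which is $\Tr X$ up to a projective summand because $\tleq G^\bt$ and $\tleq F_X^\bt$ have isomorphic first differentials. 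Splicing the resulting homotopy equivalences of the two halves across the overlapping degrees — exactly as in \cite[Theorem 2.6]{Kato_Kernels} — yields $G^\bt \cong F_X^\bt$ in $\KKp A$, so $\FF$ is essentially surjective onto $\LL_A$. Finally, as $\GG \circ \FF = \id$, fullness of $\FF$ is equivalent to faithfulness of $\GG$ on $\LL_A$, i.e.\ to the statement that a chain map $\phi : F_X^\bt \to F_Y^\bt$ with $\GG(\phi) = 0$ in $\stmod A$ is null-homotopic. Such a $\phi$ induces a morphism $X \to Y$ factoring through some projective $P$; lifting this factorisation through the one-term complex $P$ on the resolution part $\tleq(-)$ and, dually, on the transpose-resolution part $\tau_{\geqslant -1}(-)^\ast$ gives contracting homotopies on the two halves, which have to be reconciled across the splice to produce a global homotopy $\phi \simeq 0$. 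I expect this reconciliation — making the homotopy obtained from the bounded-above resolution part agree, in the neighbourhood of degrees $-1,0,1$, with the one obtained from the bounded-below transpose-resolution part — to be the main obstacle; it is precisely here that the two defining conditions of $\LL_A$ and the minimality of the complexes $F_X^\bt$ are used. Once $\FF$ is known to be fully faithful and essentially surjective onto $\LL_A$, it is an equivalence $\stmod A \xrightarrow{\sim} \LL_A$, and $\GG \circ \FF \cong \id$ exhibits $\GG = \HH^0(\tleq(-))$ as a quasi-inverse.
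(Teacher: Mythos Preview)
The paper does not give its own proof of this theorem: it is stated as a result of Kato, with the surrounding text explaining the construction of $F_X^\bt$ and noting that the argument of \cite[Theorem 2.6]{Kato_Kernels} carries over verbatim from commutative Noetherian local rings to finite dimensional algebras. Your proposal is therefore not to be compared against an in-paper proof but against Kato's original argument, and your outline follows that argument faithfully; the one point you flag as ``the main obstacle'' --- reconciling the two partial null-homotopies near degrees $-1,0,1$ to establish faithfulness of $\GG$ on $\LL_A$ --- is indeed where the substance lies, and you have not actually carried it out, so as written this remains a sketch rather than a proof.

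One small inaccuracy in your description of the construction: the lifts on the two halves are not produced independently and then matched in degrees $-1$ and $0$. Rather, one first lifts $f$ to a chain map $\tleq F_X^\bt \to \tleq F_Y^\bt$, obtaining in particular $f^{-1}$ and $f^0$; then $(f^{-1})^\ast$ and $(f^0)^\ast$ already form the beginning of a chain map between the transpose resolutions $\tau_{\geqslant -1} F_Y^{\bt,\ast} \to \tau_{\geqslant -1} F_X^{\bt,\ast}$, and one extends this to higher degrees using projectivity. This guarantees agreement in the overlap by construction and is how the paper (and Kato) set things up.
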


Recall that $\PP_A$ denotes the full subcategory of projective-injective $A$-modules.
We write $\Pperp_A$ for the full subcategory of $\rmod A$ with objects $X \in \rmod A$ such that
$\Hom_A(X,Z) = 0$ for all $Z \in \PP_A$.

\begin{Lemma} \label{Lem:L_in_Pperp}
Let $F^\bt \in \LL_A$. We have $\HH^k(F^\bt) \in \Pperp_A$ for all $k \in \ZZ$.
\end{Lemma}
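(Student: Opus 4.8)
The plan is to deduce the statement directly from the second defining condition of $\LL_A$, namely $\HH_{\geqslant 0}(F_\bullet^\ast) = 0$, by transporting it through $A$-duality; in particular I would avoid computing the cohomology modules $\HH^k(F^\bullet)$ explicitly. For $k < 0$ there is nothing to prove, since $\HH^k(F^\bullet) = 0$ by the first defining condition of $\LL_A$. So fix $k \geqslant 0$ and let $Z \in \PP_A$. As $\Du$ is a duality $\rmod A \to A\text{-mod}$ exchanging projectives and injectives, the injective module $Z$ has the form $Z \cong \Du(V)$ for a projective left $A$-module $V$; since $(-)^\ast$ restricts to an equivalence $\proj A \to A\text{-proj}$, we may write $V = Q^\ast$ with $Q \in \proj A$, so that $Z \cong \nu_A(Q)$. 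Crucially, because $Z$ is moreover projective, $Q^\ast \cong \Du(Z)$ is an \emph{injective} left $A$-module.

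I would then assemble three natural isomorphisms. First, the tensor--hom adjunction gives $\Hom_A(M, \nu_A Q) \cong \Du(M \otimes_A Q^\ast)$ for every $M \in \rmod A$, applied with $M = \HH^k(F^\bullet)$. Second, since $Q^\ast$ is projective as a left module, $- \otimes_A Q^\ast$ is exact and hence commutes with cohomology, so $\HH^k(F^\bullet) \otimes_A Q^\ast \cong \HH^k(F^\bullet \otimes_A Q^\ast)$. Third, the standard isomorphism $P \otimes_A Q^\ast \cong \Hom_A(P^\ast, Q^\ast)$ for $P \in \proj A$, applied componentwise, identifies the complex $F^\bullet \otimes_A Q^\ast$ with $\Hom_A(F_\bullet^\ast, Q^\ast)$, compatibly with differentials; under this identification the differential in cohomological degree $k$ is induced by $d_k^\ast \colon F_{k+1}^\ast \to F_k^\ast$.

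The heart of the argument is now immediate. The hypothesis $\HH_{\geqslant 0}(F_\bullet^\ast) = 0$ says precisely that $F_{k+1}^\ast \xrightarrow{d_k^\ast} F_k^\ast \xrightarrow{d_{k-1}^\ast} F_{k-1}^\ast$ is exact at $F_k^\ast$ for all $k \geqslant 0$; applying the exact functor $\Hom_A(-, Q^\ast)$, which is exact because $Q^\ast$ is injective, shows that $\Hom_A(F_\bullet^\ast, Q^\ast)$, and hence $F^\bullet \otimes_A Q^\ast$, has vanishing cohomology in every degree $k \geqslant 0$. Feeding this back through the first two isomorphisms yields $\Hom_A(\HH^k(F^\bullet), Z) \cong \Du(\HH^k(F^\bullet) \otimes_A Q^\ast) \cong \Du(\HH^k(F^\bullet \otimes_A Q^\ast)) = 0$ for all $k \geqslant 0$, which together with the case $k < 0$ gives the claim.

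The one point requiring care --- and, I expect, the only real obstacle --- is the bookkeeping through the dualities: one must be sure to land on the injective side, using \emph{both} halves of the hypothesis that $Z$ is projective-injective. Injectivity of $Z$ is what permits writing $Z \cong \nu_A Q$, while projectivity of $Z$ is exactly what makes $Q^\ast$ injective, so that $\Hom_A(-, Q^\ast)$ is exact; without the latter the final cohomology computation fails. Everything else is a formal chase through the natural isomorphisms listed above, together with the two defining conditions of $\LL_A$.
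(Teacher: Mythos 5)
Your proof is correct, and it takes a genuinely different route from the paper's. The paper fixes a single morphism $f : \HH^k(F^\bt) \to Z$, extends it via injectivity of $Z$ to a chain map $\varphi^\bt : F^\bt \to Z[-k]$, dualizes to $\varphi^\ast_\bt : Z^\ast[-k] \to F^\ast_\bt$, and then uses $\HH_k(F^\ast_\bt) = 0$ together with projectivity of $Z^\ast$ to lift $\varphi^\ast_k$ through $d_k^\ast$, showing $\varphi^\ast$ is null-homotopic and thus $f = 0$. You instead compute the whole Hom space $\Hom_A(\HH^k(F^\bt), Z)$ at once: writing $Z \cong \nu_A(Q)$ and pushing $\Hom_A(-, Z)$ through the hom--tensor adjunction $\Hom_A(-, \Du Q^\ast) \cong \Du(- \otimes_A Q^\ast)$, flatness of $Q^\ast$ to commute with cohomology, and the identification $F^\bt \otimes_A Q^\ast \cong \Hom_A(F^\ast_\bt, Q^\ast)$, you reduce the claim to vanishing of $\HH^k\bigl(\Hom_A(F^\ast_\bt, Q^\ast)\bigr)$ for $k \geqslant 0$, which follows from exactness of $F^\ast_\bt$ in nonnegative degrees together with injectivity of $Q^\ast = \Du(Z)$. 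The two arguments are dual to one another and use the projective-injective hypothesis in parallel fashion --- injectivity of $Z$ to begin, projectivity of $Z$ to finish --- but your functorial version sidesteps a small imprecision in the paper's conclusion: the paper writes $\varphi^k = \pi\,\alpha = 0$, whereas in truth one only obtains $\varphi^k = d^k h^\ast$ for the homotopy $h$, and $f = 0$ follows after restricting $\varphi^k$ to $\Ker d^k$. Your computation delivers $\Hom_A(\HH^k(F^\bt), Z) = 0$ outright with no such bookkeeping, at the price of a longer chain of natural isomorphisms to keep straight.
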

\begin{proof}
Let $k \in \ZZ$. If $k < 0$, we have $\HH^k(F^\bt) = 0$ and there is nothing to show. Thus, we assume that $k \geqslant 0$.
Suppose given $Z \in \PP_A$ and $f : \HH^k(F^\bt) \to Z$.
Consider the following commutative diagram. The morphism $\alpha$ exists since $Z$ is injective. 
\[
\begin{tikzcd}[row sep=.9cm, column sep=.7cm]
\cdots \ar[r] & F^{k-1} \ar[r, "d^{k-1}"] &  F^k \ar[r] \ar[dr, twoheadrightarrow, "\pi"] & F^{k+1} \ar[r] & \cdots \\
 & \Ker{d^{k}} \ar[ur, hookrightarrow] \ar[dr, twoheadrightarrow] & & F^k /\, \im{d^{k-1}} \ar[ddl, dashed, "\alpha"]\\
 & & \HH^k(F^\bt) \ar[ur, hookrightarrow] \ar[d, "f"] \\
 & & Z
\end{tikzcd} 
\]
Since $d^{k-1} \, \pi \, \alpha = 0$, this yields a morphism of complexes $\varphi^\bt := \pi \, \alpha : F^\bt \to Z[-k]$
if we consider $Z \in \KKp A$ as the complex concentrated in degree zero.
Applying $(-)^\ast = \Hom_A(-,A)$, we obtain a morphism of complexes $\varphi^\ast_\bt : Z^\ast[-k] \to F_\bt^\ast$.
Since $F^\bt \in \LL_A$ and $k \geqslant 0$, we have $\HH_k(F_\bt^\ast) = 0$. Using that $Z$ is projective, we obtain $\varphi_\bt^\ast = 0$ and
thus also $\varphi^\bt = 0$. In particular, we have $\varphi^k = \pi\,\alpha = 0$. Since $\pi$ is surjective, this results
in $f = 0$.
\end{proof}

\section{Perfect exact sequences}
\label{Sec:PerfectExactSequences}

The following class of short exact sequences will be our main tool throughout this article.
\begin{Definition} \label{Def:PerfSeq}
A short exact sequence $0 \to X \to Y \to Z \to 0$ in $\rmod A$ is called \textit{perfect exact} if
the induced sequence $0 \to X^\ast \to Y^\ast \to Z^\ast \to 0$ is exact in $A$-mod.
\end{Definition}

Since $\Hom_A(-,A)$ is right exact, a short exact sequence $0 \to X \xrightarrow{f} Y \to Z \to 0$ in $\rmod A$ is perfect exact 
if and only if $\Hom_A(f,A)$ is surjective. In particular, this holds if $X^\ast = 0$.
Another class of examples is given by almost split sequences with non-projective starting term.
Throughout, we will denote the almost split sequence starting in an indecomposable 
non-injective $A$-module $X$ as follows.
\[ 
0 \rightarrow X \rightarrow E_X \rightarrow \tau^{-1}(X) \rightarrow 0 
\]
\begin{Lemma} \label{Ex:PerfectSeq}
An almost split sequence $0 \to X \xrightarrow{f} E_X \xrightarrow{g} \tau^{-1}(X) \to 0$ 
is perfect exact if and only if $X$ is not projective. 
\end{Lemma}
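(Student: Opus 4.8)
The plan is to reduce everything to the criterion recalled just before the statement: the short exact sequence $0 \to X \xrightarrow{f} E_X \xrightarrow{g} \tau^{-1}(X) \to 0$ is perfect exact if and only if $\Hom_A(f,A)\colon \Hom_A(E_X,A) \to \Hom_A(X,A)$ is surjective. Since the image of $\Hom_A(f,A)$ consists precisely of those morphisms $X \to A$ that factor through $f$ (with the diagrammatic composition convention of the paper, $\Hom_A(f,A)$ sends $\psi\colon E_X \to A$ to $f\psi$), the whole question is: when does every $\varphi \in \Hom_A(X,A)$ factor through $f$?

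For the implication "$X$ not projective $\Rightarrow$ perfect exact", let $\varphi\colon X \to A$ be arbitrary. The module $X$ is indecomposable (it is the left-hand term of an almost split sequence) and not projective, so $\varphi$ cannot be a split monomorphism: a split mono $X \to A$ would exhibit $X$ as a direct summand of the projective module $A_A$, forcing $X$ to be projective. Since $f$ is left almost split, every morphism out of $X$ that is not a split monomorphism factors through $f$; hence $\varphi = f\psi$ for some $\psi\colon E_X \to A$, so $\varphi \in \im \Hom_A(f,A)$. Therefore $\Hom_A(f,A)$ is surjective and the sequence is perfect exact.

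For the converse I argue by contraposition: if $X$ is projective, the sequence is not perfect exact. Being indecomposable projective, $X \cong eA$ for a primitive idempotent $e$, so the inclusion $\iota\colon X \hookrightarrow A$ with complement $(1-e)A$ is a split monomorphism. If $\iota$ were in the image of $\Hom_A(f,A)$, say $\iota = f\psi$, then composing with a retraction $\rho\colon A \to X$ of $\iota$ would give $f(\psi\rho) = \id_X$, making $f$ a split monomorphism and contradicting the non-splitness of the almost split sequence. Hence $\iota \notin \im \Hom_A(f,A)$, so $\Hom_A(f,A)$ is not surjective.

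I do not expect a genuine obstacle here; the only points needing care are the bookkeeping with the diagrammatic composition convention when identifying the image of $\Hom_A(f,A)$ with the morphisms factoring through $f$, and the elementary observation that a split monomorphism from an indecomposable module into $A_A$ forces that module to be projective. (Note also that the degenerate case $\Hom_A(X,A)=0$ is automatically covered by the first implication, and that such $X$ is indeed never indecomposable projective.)
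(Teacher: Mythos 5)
Your proof is correct and follows essentially the same route as the paper: both reduce perfect exactness to the surjectivity of $\Hom_A(f,A)$, then use the left-almost-split property of $f$ together with the indecomposability of $X$ to identify the obstruction as the existence of a split monomorphism $X \to A$, which exists precisely when $X$ is projective. The paper packages the two directions into a single terse biconditional ("$p$ factors through $f$ iff $p$ is not split"), whereas you spell out the forward implication and the contrapositive separately, but the underlying argument is identical.
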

\begin{proof}
A morphism $p : X \to A$ induces a morphism $\beta : E_X \to A$ with 
$f\, \beta = p$ if and only if $p$ is not split. However, this holds if and only if $X$ is not projective,
since the starting term of  an almost split sequence is indecomposable.
\end{proof}

Finally, we will need the following sufficient condition for algebras with dominant dimension at least one.
\begin{Lemma} \label{Lem:Pperp_PerfSeq_IfDomDim}
Suppose that $\ddim A \geqslant 1$. Let $Y \in \Pperp_A$.
Let $Y'$ be a submodule of $Y$.

We have $(Y')^\ast = 0$ and every short exact sequence $0 \to X \to Y' \to Z \to 0$ in $\rmod A$
is a perfect exact sequence. 
\end{Lemma}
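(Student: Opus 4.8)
The statement to prove is Lemma \ref{Lem:Pperp_PerfSeq_IfDomDim}: if $\ddim A \geqslant 1$, $Y \in \Pperp_A$, and $Y'$ is a submodule of $Y$, then $(Y')^\ast = 0$, and every short exact sequence $0 \to X \to Y' \to Z \to 0$ is perfect exact.

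The plan is to first establish $(Y')^\ast = 0$, and then deduce the statement about perfect exact sequences directly from the remark following \cref{Def:PerfSeq}. For the first part, I would recall that $\ddim A \geqslant 1$ means the injective envelope $A \hookrightarrow I^0$ of $A$ as a right module is projective, i.e.\ $I^0 \in \PP_A$. Now let $\varphi : Y' \to A$ be any homomorphism. Composing with the inclusion $A \hookrightarrow I^0$ gives a map $Y' \to I^0$; since $I^0$ is injective and $Y' \subseteq Y$, this extends to a map $\psi : Y \to I^0$. But $I^0 \in \PP_A$ and $Y \in \Pperp_A$, so $\Hom_A(Y, I^0) = 0$, hence $\psi = 0$, and therefore the composite $Y' \to A \hookrightarrow I^0$ is zero. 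Since $A \hookrightarrow I^0$ is a monomorphism, this forces $\varphi = 0$. As $\varphi$ was arbitrary, $(Y')^\ast = \Hom_A(Y', A) = 0$.

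For the second part, given any short exact sequence $0 \to X \xrightarrow{f} Y' \xrightarrow{g} Z \to 0$ in $\rmod A$, applying $\Hom_A(-,A)$ yields the exact sequence $0 \to Z^\ast \to (Y')^\ast \to X^\ast$. By the first part $(Y')^\ast = 0$, so the induced sequence $0 \to Z^\ast \to 0 \to X^\ast \to 0$ is trivially exact (indeed $Z^\ast = 0$ as well), and in particular $\Hom_A(f,A) : (Y')^\ast \to X^\ast$ is surjective — it is the zero map out of the zero module. By the characterization recalled after \cref{Def:PerfSeq}, a short exact sequence with $X^\ast = 0$, or more directly with $\Hom_A(f,A)$ surjective, is perfect exact. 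Hence $0 \to X \to Y' \to Z \to 0$ is perfect exact.

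There is essentially no obstacle here: the only point requiring care is the correct reading of $\ddim A \geqslant 1$ as "the injective envelope of ${}_A A_A$ as a right $A$-module is projective" (equivalently projective-injective), and the use of injectivity of that envelope to extend a map defined on the submodule $Y'$ to all of $Y$. Once that extension step is in place, everything is formal. One should perhaps note explicitly that $I^0 \in \PP_A$, since dominant dimension $\geqslant 1$ gives that $I^0$ is projective and it is injective by construction, so it lies in the subcategory $\PP_A$ of projective-injective modules used in the definition of $\Pperp_A$.
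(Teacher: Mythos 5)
Your first part is correct and essentially the paper's argument (the paper factors it through the intermediate observation that $Y' \in \Pperp_A$, but the underlying mechanism — extend $Y' \to I^0$ to $Y \to I^0$ using injectivity of a projective--injective cogenerating $I^0$, then use $Y \in \Pperp_A$, then use that $A \hookrightarrow I^0$ is mono — is the same).

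The second part, however, has a real gap. From the short exact sequence you only get the left-exact sequence $0 \to Z^\ast \to (Y')^\ast \to X^\ast$; you do \emph{not} get a $0$ on the right for free — that surjectivity is precisely what ``perfect exact'' asserts. Knowing $(Y')^\ast = 0$ lets you conclude $Z^\ast = 0$ (since $Z^\ast$ injects into the zero module), but it does not by itself force $X^\ast = 0$: the zero map $0 \to X^\ast$ is surjective only if $X^\ast = 0$, and you have not established that. The correct fix is short: since $X$ embeds in $Y'$, it embeds in $Y$, so the first part (applied to the submodule $X$ of $Y$) gives $X^\ast = 0$, and then $\Hom_A(f,A)$ is trivially surjective. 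This is exactly what the paper does — it notes that $X$ is isomorphic to a submodule of $Y$ and invokes the same vanishing. As written, your ``it is the zero map out of the zero module'' tacitly assumes the target is zero without argument, so the step does not stand.
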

\begin{proof}
Let $Y'$ be a submodule of $Y$.
Since the embedding $Y' \hookrightarrow Y$ is injective, the condition $Y \in \Pperp$ 
implies that $Y'$ is contained in $\Pperp$ as well.
Using that $\ddim A \geqslant 1$, we obtain that $\Hom(Y',A) = 0$.

Suppose given a short exact sequence $0 \to X \to Y' \to Z \to 0$ in  $\rmod A$.
Since $X$ is isomorphic to a submodule of $Y$, we have seen above that $X^\ast = 0$.
Thus, $0 \to X \to Y' \to Z \to 0$ is a perfect exact sequence.
\end{proof}

The equivalence $\FF: \stmod A \to \LL_A$ induces a correspondence between perfect exact sequences in $\mod A$
and distinguished triangles in $\KKp A$. The following is based on \cite[Proposition 3.6]{Kato_Mono}.
\begin{Proposition}[Kato] \label{Prop:PerfSeq-DistTriang}
Suppose given a sequence $X \xrightarrow{f} Y \xrightarrow{g} Z$ in $\rmod A$ such that 
$Y$ has no projective direct summand.
The following are equivalent.
\begin{itemize}
\item[(1)] There exists a projective module $P$ and morphisms $p$ and $q$ such that 
\[ 
0 \rightarrow X \xrightarrow{\matez{f}{p}} Y \oplus P \xrightarrow{\matze{g}{q}} Z \rightarrow 0
\]
is a perfect exact sequence in $\rmod A$.

\item[(2)] The sequence 
$ F_X^\bt \xrightarrow{f^\bt} F_Y^\bt \xrightarrow{g^\bt} F_Z^\bt \to$
is a distinguished triangle in $\KKp A$.
\end{itemize}
\end{Proposition}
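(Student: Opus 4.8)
The plan is to translate everything through the mapping-cone description of distinguished triangles: a triangle in $\KKp{A}$ is distinguished exactly when it is isomorphic to the triangle attached to a componentwise split short exact sequence of complexes, so it suffices to produce such a sequence in each direction. I will use throughout that $\FF$ is a functor with $F_P^\bt\simeq 0$ for $P\in\proj A$ (hence $F_{Y\oplus P}^\bt\simeq F_Y^\bt$ in $\KKp{A}$), that the quasi-inverse of $\FF$ on $\LL_A$ is $\HH^0\circ\tau_{\leqslant 0}$ by \cref{Thm:Equivalence_F}, and that the two conditions defining $\LL_A$ in \cref{Def:L-Category} are homotopy invariant, so $\LL_A$ is closed under isomorphism in $\KKp{A}$.

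For $(1)\Rightarrow(2)$ I would start from the perfect exact sequence $0\to X\xrightarrow{\matez{f}{p}}Y\oplus P\xrightarrow{\matze{g}{q}}Z\to 0$. Perfectness means that $\Hom_A(-,A)$ turns it into a short exact sequence $0\to Z^\ast\to(Y\oplus P)^\ast\to X^\ast\to 0$ in $A$-mod. Apply the horseshoe lemma twice: once to the sequence itself, giving projective resolutions of $X$, $Y\oplus P$, $Z$ in non-positive degrees that contain $\tau_{\leqslant 0}F_X^\bt$ and $\tau_{\leqslant 0}F_Z^\bt$ and form a componentwise split short exact sequence of complexes; and once to its dual, giving projective resolutions of the three dual modules that contain $\tau_{\geqslant 1}F_X^{\bt,\ast}$ and $\tau_{\geqslant 1}F_Z^{\bt,\ast}$ and again form a componentwise split short exact sequence. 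These can be chosen to agree on the overlapping degrees $-1,0$ — this is precisely what perfectness buys — and splicing them produces a complex $W^\bt\in\CCp{A}$ with a componentwise split short exact sequence $0\to F_X^\bt\to W^\bt\to F_Z^\bt\to 0$ in which $W^\bt$ has the shape defining $F_{Y\oplus P}^\bt$, so $W^\bt\simeq F_{Y\oplus P}^\bt\simeq F_Y^\bt$. In $\KKp{A}$ this is a distinguished triangle $F_X^\bt\to F_Y^\bt\to F_Z^\bt\to F_X^\bt[1]$; since the outer maps lift $\matez{f}{p}$ and $\matze{g}{q}$, functoriality of $\FF$ applied to the projection $Y\oplus P\to Y$ and its section identifies them with $f^\bt$ and $g^\bt$, which is $(2)$.

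For $(2)\Rightarrow(1)$ I would use that a distinguished triangle with first two terms $F_X^\bt\xrightarrow{f^\bt}F_Y^\bt$ yields an isomorphism $\mathrm{Cone}(f^\bt)\xrightarrow{\sim}F_Z^\bt$ in $\KKp{A}$ carrying the canonical map $F_Y^\bt\to\mathrm{Cone}(f^\bt)$ to $g^\bt$. Replace $f^\bt$ by the componentwise split monomorphism $F_X^\bt\hookrightarrow\mathrm{Cyl}(f^\bt)$ into its mapping cylinder, which has cokernel $\mathrm{Cone}(f^\bt)$ and satisfies $\mathrm{Cyl}(f^\bt)\simeq F_Y^\bt$, so all three terms of the componentwise split sequence $0\to F_X^\bt\to\mathrm{Cyl}(f^\bt)\to\mathrm{Cone}(f^\bt)\to 0$ lie in $\LL_A$. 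Applying $\HH^0\circ\tau_{\leqslant 0}$: truncation preserves componentwise split exactness, and after $\HH^0$ the only obstruction to exactness on the left is the connecting term $\HH^{-1}(\mathrm{Cone}(f^\bt))=\HH^{-1}(F_Z^\bt)$, which vanishes because $F_Z^\bt\in\LL_A$. One thus obtains a short exact sequence of modules
\[
0\longrightarrow X\longrightarrow Y\oplus P'\longrightarrow Z\oplus P''\longrightarrow 0
\]
with $P',P''\in\proj A$ — here $\HH^0\tau_{\leqslant 0}$ sends $\mathrm{Cyl}(f^\bt)\simeq F_Y^\bt$ to a module stably isomorphic to $Y$, hence to $Y\oplus P'$ since $Y$ has no projective summand, and similarly for the cone. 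The surjection onto $Z\oplus P''$ composed with the projection onto $P''$ splits, so $P''$ may be split off; cancelling it against $P'$ by Krull--Schmidt (again using that $Y$ has no projective summand) leaves $0\to X\to Y\oplus\tilde P\to Z\to 0$. Tracking the morphisms through the mapping cylinder and the identification $\mathrm{Cone}(f^\bt)\cong F_Z^\bt$, and adjusting by morphisms factoring through projectives (enlarging $\tilde P$ if necessary), one arranges that the two maps are $\matez{f}{\tilde p}$ and $\matze{g}{\tilde q}$. Finally, that this sequence is perfect follows by dualizing: $\Hom_A(-,A)$ turns the componentwise split short exact sequence of complexes into another one, and since $F_X^\bt,F_Z^\bt\in\LL_A$ forces $\HH_{\geqslant 0}(F_X^{\bt,\ast})=0$ and likewise for the cone, the same truncation-and-homology argument shows that $\Hom_A(-,A)$ applied to $0\to X\to Y\oplus\tilde P\to Z\to 0$ is exact. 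This gives $(1)$.

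The delicate part is the bookkeeping with projective direct summands: in $(1)\Rightarrow(2)$, making the two horseshoe resolutions glue along degrees $-1,0$ so that the spliced complex is genuinely homotopy equivalent to $F_{Y\oplus P}^\bt$; and in $(2)\Rightarrow(1)$, stripping the spurious summands $P',P''$ while keeping enough control of the morphisms to land on $\matez{f}{\tilde p}$ and $\matze{g}{\tilde q}$ on the nose — the mapping cylinder makes the $Y$- and $Z$-components come out right, but only after an explicit computation. By contrast, once the complex-level sequence is known to be componentwise split and its three terms are known to lie in $\LL_A$, perfectness of the resulting short exact sequence should drop out formally.
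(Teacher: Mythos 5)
The paper does not give a proof of this proposition; it is stated with the attribution ``based on \cite[Proposition 3.6]{Kato_Mono}'' and proved there, not here. So I can only assess your argument on its own terms. Your overall strategy is the right one: distinguished triangles in $\KKp{A}$ are modelled by componentwise split short exact sequences of complexes, $(1)\Rightarrow(2)$ by building such a presentation of $F_Z^\bt$ from a double horseshoe, and $(2)\Rightarrow(1)$ by a cylinder--cone replacement followed by $\HH^0\circ\tau_{\leqslant 0}$ and its dual analogue $\HH_1\circ\tau_{\geqslant 1}(-)^\ast$. Your observation that perfectness of the resulting module sequence ``drops out formally'' by applying $\Hom_A(-,A)$ to the componentwise split complex sequence, and that $\HH^{-1}(F_Z^\bt)=0$ (respectively $\HH_1(F_X^{\bt,\ast})=0$) kills the connecting map, is exactly the right mechanism.

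But there is a real gap at the splice in $(1)\Rightarrow(2)$. You write that the two horseshoe resolutions ``can be chosen to agree on the overlapping degrees $-1,0$ --- this is precisely what perfectness buys.'' There is no overlap: the first horseshoe (of $0\to X\to Y\oplus P\to Z\to 0$) lives in degrees $\leqslant 0$, and the second (of $0\to Z^\ast\to(Y\oplus P)^\ast\to X^\ast\to 0$) lives, after dualizing back, in degrees $\geqslant 1$. What is actually needed is to \emph{define} the junction differential $d^0_W\colon W^0\to W^1$ and check that it is a chain map compatible with the componentwise split inclusions $F_X^\bt\hookrightarrow W^\bt$ and projections $W^\bt\twoheadrightarrow F_Z^\bt$. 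One clean way: with $M:=Y\oplus P$, let $W^0\twoheadrightarrow M$ and $W^{1,\ast}\twoheadrightarrow M^\ast$ be the horseshoe augmentations, define $(d^0_W)^\ast$ as the composite $W^{1,\ast}\twoheadrightarrow M^\ast\hookrightarrow W^{0,\ast}$, and take $d^0_W$ to be its predual using that $W^0,W^1\in\proj A$. Then $d^{-1}_W d^0_W=0$ and $W^\bt\in\LL_A$ come for free, but the compatibility with the $F_X$- and $F_Z$-components requires the observation that $(d^0_X)^\ast$ factors as $F_X^{1,\ast}\twoheadrightarrow X^\ast\hookrightarrow F_X^{0,\ast}$, which holds precisely because $\HH_0(F_X^{\bt,\ast})=0$ for $F_X^\bt\in\LL_A$ (and similarly for $Z$). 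That verification is the content of the step you waved at. A second, smaller omission is in $(2)\Rightarrow(1)$: the hypothesis only says $Y$ has no projective summand, so $\HH^0(\tau_{\leqslant 0}\mathrm{Cone}(f^\bt))$ is $Z_0\oplus R'$ with $Z_0$ the projective-free part of $Z$ and $R'$ some projective that need not coincide with the projective part $R$ of $Z$; you must strip $R'$, then add back $R$ by summing with $0\to 0\to R\to R\to 0$, to land on a sequence ending exactly in $Z$.
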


Perfect exact sequences with projective middle term are of special importance for us.
They correspond to the shift in $\KKp A$ under the equivalence $\FF$ of \cref{Thm:Equivalence_F}.
\begin{Lemma} \label{Lem:Ext=0_PerfectSeq}
The following are equivalent for $Z \in \rmod A$.
\begin{itemize}
\item[(1)] $\Ext_A^1(Z,A) = 0$

\item[(2)] There exists a perfect exact sequence $0 \to X \to P \to Z \to 0$ with $P \in \proj A$.

\item[(3)] Every short exact sequence ending in $Z$ is perfect exact.

\item[(4)] $F_Z^\bt[-1] \in \LL_A$
\end{itemize}
\end{Lemma}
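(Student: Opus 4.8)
The plan is to prove the equivalence of the four conditions in \cref{Lem:Ext=0_PerfectSeq} in the cycle $(1)\Rightarrow(3)\Rightarrow(2)\Rightarrow(1)$, and then handle $(2)\Leftrightarrow(4)$ separately using the construction of $F_Z^\bt$ and \cref{Prop:PerfSeq-DistTriang}.

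\textbf{The cycle $(1)\Rightarrow(3)\Rightarrow(2)\Rightarrow(1)$.} For $(1)\Rightarrow(3)$, take any short exact sequence $0\to X\xrightarrow{f} Y\to Z\to 0$. Applying $\Hom_A(-,A)$ yields the exact sequence $0\to Z^\ast\to Y^\ast\xrightarrow{f^\ast} X^\ast\to \Ext_A^1(Z,A)$, so surjectivity of $f^\ast$ — which by the remark after \cref{Def:PerfSeq} is exactly perfectness — follows from $\Ext_A^1(Z,A)=0$. The implication $(3)\Rightarrow(2)$ is immediate by choosing a projective cover $P\twoheadrightarrow Z$ and letting $X$ be its kernel: this short exact sequence ends in $Z$, hence is perfect exact by $(3)$. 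For $(2)\Rightarrow(1)$, apply $\Hom_A(-,A)$ to a perfect exact sequence $0\to X\to P\to Z\to 0$ with $P$ projective: we get $0\to Z^\ast\to P^\ast\to X^\ast\to 0$ exact, and since $P^\ast$ is projective (as $(-)^\ast$ sends $\proj A$ to $A$-proj) and this is a projective resolution-style sequence, we compute $\Ext_A^1(Z,A)$ by applying $\Hom_A(-,A)$; but one must be slightly careful here, since $\Ext^1_A(Z,A)$ is computed from a projective resolution of $Z$ as a right module, whereas $P^\ast$ is a left module. The cleaner route is: $0\to X\to P\to Z\to 0$ with $P$ projective gives the long exact sequence $\cdots\to P^\ast\to X^\ast\to\Ext_A^1(Z,A)\to\Ext_A^1(P,A)=0$, and perfectness says $P^\ast\to X^\ast$ is surjective, whence $\Ext_A^1(Z,A)=0$.

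\textbf{The equivalence $(2)\Leftrightarrow(4)$.} Recall from the construction of $F_Z^\bt$ that $\tleq F_Z^\bt$ is the minimal projective resolution of $Z$ and $\tau_{\geqslant 1}F_Z^{\bt,\ast}$ is the minimal projective resolution of $Z^\ast$, while $\tau_{\geqslant -1}F_Z^{\bt,\ast}$ is the minimal projective resolution of $\Tr(Z)$. The shifted complex $F_Z^\bt[-1]$ has $\HH^{k}(F_Z^\bt[-1])=\HH^{k-1}(F_Z^\bt)$, which vanishes for $k<0$ precisely when $\HH^{-1}(F_Z^\bt)=0$ and $\HH^{<-1}(F_Z^\bt)=0$; the latter always holds since $F_Z^\bt\in\LL_A$, and $\HH^{-1}(F_Z^\bt)$ is the homology one degree up in the $\Tr(Z)$-resolution, so it vanishes iff the map $F_Z^{0,\ast}\to F_Z^{-1,\ast}$ stays exact after inserting $F_Z^{1,\ast}$, i.e.\ iff $\Ext_A^1(Z,A)=0$ using that $\HH_1(F_Z^{\bt,\ast})=\Ext^1_A(Z,A)$ by the minimal resolution of $Z$. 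Dually, $\HH_{\geqslant 0}((F_Z^\bt[-1])^\ast)=\HH_{\geqslant 1}(F_Z^{\bt,\ast})$, which vanishes for degrees $\geqslant 1$ because these are the higher homologies of the projective resolution of $Z^\ast$; degree $0$ needs separate bookkeeping. Rather than push this index-chase by hand, I would instead invoke \cref{Prop:PerfSeq-DistTriang}: a perfect exact sequence $0\to X\to P\to Z\to 0$ with $P$ projective corresponds under $\FF$ to a distinguished triangle $F_X^\bt\to F_P^\bt\to F_Z^\bt\to$; since $P$ is projective, $F_P^\bt\simeq 0$ in $\KKp A$ (its complex is concentrated in the resolution of $P$ which is $P$ itself, with $\Tr(P)=0$), so the triangle forces $F_X^\bt\simeq F_Z^\bt[-1]$ in $\KKp A$. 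As $F_X^\bt\in\LL_A$, this gives $F_Z^\bt[-1]\in\LL_A$, proving $(2)\Rightarrow(4)$. Conversely, if $F_Z^\bt[-1]\in\LL_A$, then since $\FF:\stmod A\to\LL_A$ is an equivalence there is some $X\in\stmod A$ with $F_X^\bt\simeq F_Z^\bt[-1]$; building the triangle on $F_X^\bt\to 0\to F_Z^\bt\to$ (using the shift) and translating back via \cref{Prop:PerfSeq-DistTriang} produces the desired perfect exact sequence $0\to X\to P\to Z\to 0$ with $P$ projective, giving $(4)\Rightarrow(2)$.

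\textbf{Main obstacle.} The delicate point is matching the homological degrees in the $(2)\Leftrightarrow(4)$ step — in particular verifying that the ``degree $0$'' conditions in $\HH^{<0}(F_Z^\bt[-1])=0$ and $\HH_{\geqslant 0}((F_Z^\bt[-1])^\ast)=0$ really come down to exactly $\Ext_A^1(Z,A)=0$ and nothing more, given that $F_Z^\bt$ is built by splicing the resolution of $Z$ with the resolution of $\Tr(Z)$. Routing through \cref{Prop:PerfSeq-DistTriang} and the fact that $F_P^\bt\simeq 0$ for $P$ projective sidesteps most of this, so the remaining care is just to check that the connecting morphisms in the triangle are the ones induced by $f$ and $g$ and that $F_X^\bt$ being an honest object of $\LL_A$ transports correctly under the isomorphism $F_X^\bt\simeq F_Z^\bt[-1]$.
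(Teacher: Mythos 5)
Your proof is correct and takes essentially the same route as the paper: the cycle $(1)\Rightarrow(3)\Rightarrow(2)\Rightarrow(1)$ via the long exact $\Ext$-sequence and a projective cover, and $(2)\Leftrightarrow(4)$ by applying \cref{Prop:PerfSeq-DistTriang} to the triangle $F_Z^\bt[-1]\to 0\to F_Z^\bt\to$ (with $Y=0$ and middle term a projective $P$). The initial direct degree-chase you sketched for $(2)\Leftrightarrow(4)$ does contain an index error (e.g.\ $\Ext^1_A(Z,A)$ appears as $\HH_{-1}(F_Z^{\bt,\ast})$, not $\HH_1$, in the paper's conventions), but since you explicitly discard that route in favour of \cref{Prop:PerfSeq-DistTriang}, which is exactly what the paper does, the final argument is fine.
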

\begin{proof}
Suppose given a short exact sequences $0 \to X \to Y \to Z \to 0$ in $\rmod A$.
Applying $\Hom_A(-,A)$, we obtain the following exact sequence.
\[
0 \to \Hom_A(Z,A) \to \Hom_A(Y,A) \to \Hom_A(X,A) \to \Ext^1_A(Z,A) \to \Ext_A^1(Y,A)
\]
Thus, the short exact sequence is perfect exact if $\Ext^1_A(Z,A) = 0$.
If in addition $Y$ is projective, then the converse holds as well.
Note that there always exists a short exact sequence of the form $0 \to X \to P \to Z \to 0$ with $P$ the 
projective cover of $Z$.

We verify the equivalence of $(2)$ and $(4)$.
Consider the following distinguished triangle in $\KKp A$.
\[
F_Z^\bt[-1] \to 0 \to F_Z^\bt \to 
\]
By \cref{Prop:PerfSeq-DistTriang}, $F_Z^\bt[-1] \in \LL_A$ if and only if we have a perfect exact sequence
\[
0 \to X \to P \to Z \to 0
\]
in $\rmod A$ with $P \in \proj A$ and $X:= \HH^0(\tleq F_Z^\bt[-1])$. 
In this case, $F_Z^\bt[-1] \simeq F_X^\bt \in \LL_A$.
\end{proof}

Now, we consider the starting term of a perfect exact sequence with projective middle term.
We are mainly interested in the case of simple modules. 
Recall that $F_X^0$ is the projective cover of $X \in \rmod A$.
If $F_X^1 = 0$, the complex $F_X^\bt$ is the minimal projective resolution of $X$.

\begin{Lemma} \label{Lem:Charact_nu(S)=0}
The following are equivalent for $X \in \rmod A$.
\begin{itemize}
\item[(1)] $F_X^\bt[1] \in \LL_A$.

\item[(2)] There exists a perfect exact sequence $0 \to X \to P \to Z \to 0$ in $\rmod A$ with $P \in \proj A$.
\end{itemize}
Furthermore, (1) and (2) imply the following equivalent conditions.
\begin{itemize}
\item[(3)] $X^\ast \neq 0$.

\item[(4)] $F_X^1 \neq 0$.
\end{itemize}
If $X$ is simple, all four conditions are equivalent.
\end{Lemma}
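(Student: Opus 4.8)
The plan is to establish the equivalences in two groups and then bridge them under the simplicity hypothesis. First I would prove $(1) \Leftrightarrow (2)$ in complete analogy with the proof of $(2) \Leftrightarrow (4)$ in \cref{Lem:Ext=0_PerfectSeq}, but using the distinguished triangle
\[
F_Z^\bt[-1] \to F_X^\bt \to F_P^\bt \to
\]
or rather, more directly, the triangle $F_X^\bt \to 0 \to F_X^\bt[1] \to$ shifted appropriately. Concretely: $F_X^\bt[1] \in \LL_A$ holds if and only if, by \cref{Prop:PerfSeq-DistTriang} applied to the sequence $X \xrightarrow{f} 0 \to Z$ (with $Z := \HH^0(\tleq F_X^\bt[1])$, suitably interpreted), there is a perfect exact sequence $0 \to X \to P \to Z \to 0$ with $P \in \proj A$; here the projective $P$ arises as the $P$ appearing in condition (1) of that proposition with the "$Y$" slot being zero, and $F_X^\bt[1] \simeq F_Z^\bt$. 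I would double-check the precise indexing so that the shift lands the projective cover $F_X^0$ into the role of the middle projective term, which is exactly what makes the middle term of the resulting short exact sequence projective.

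Next I would prove $(3) \Leftrightarrow (4)$, which is essentially bookkeeping about the construction of $F_X^\bt$ recalled after \cref{Def:L-Category}: by construction $\tau_{\geqslant 1} F_X^{\bt,\ast}$ is the minimal projective resolution of $X^\ast$, and $F_X^{1,\ast}$ is the projective cover of $X^\ast$. Hence $F_X^1 = (F_X^{1,\ast})^\ast$ is nonzero precisely when $X^\ast \neq 0$, since $(-)^\ast$ is an equivalence $\proj A \to A\text{-proj}$ and the projective cover of a module is zero iff the module is zero.

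For the implication $(1)/(2) \Rightarrow (3)$: if $0 \to X \to P \to Z \to 0$ is perfect exact with $P$ projective, then applying $\Hom_A(-,A)$ gives an exact sequence $0 \to Z^\ast \to P^\ast \to X^\ast \to 0$; if $X^\ast = 0$ then $Z^\ast \cong P^\ast$, so the surjection $P \twoheadrightarrow Z$ would become an isomorphism after dualizing, forcing (since $P$ is projective and $P \to Z$ is part of a short exact sequence with kernel $X$) the kernel $X$ to satisfy $X^\ast = 0$ trivially — but I need more: I want to derive that $X$ must then be such that $F_X^\bt[1]$ fails to be in $\LL_A$ unless $X^\ast \neq 0$. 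Here the cleanest route is: $F_X^\bt[1] \in \LL_A$ requires $\HH_{\geqslant 0}((F_X^\bt[1])^\ast) = 0$, and $(F_X^\bt[1])^\ast = F_X^{\bt,\ast}[-1]$; the degree-$0$ homology of this shifted complex is governed by $F_X^{1,\ast}$, whose vanishing would contradict it being a genuine complex in $\LL_A$ with the required acyclicity unless $F_X^1 \neq 0$, i.e. unless $X^\ast \neq 0$. I would spell this out by tracking which homology group of $F_X^{\bt,\ast}$ the shift moves into the forbidden range $\geqslant 0$.

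Finally, for the simple case I would show $(3) \Rightarrow (1)$. When $X$ is simple, $F_X^\bt$ is either the minimal projective resolution of $X$ (if $F_X^1 = 0$, i.e. $X^\ast = 0$) or, when $X^\ast \neq 0$, the splicing construction makes $F_X^1 = (\text{projective cover of } X^\ast)^\ast \neq 0$ the start of the minimal projective resolution of $X^\ast$ in $A$-mod. In that situation $F_X^\bt[1]$ has $\HH^{<0} = 0$ because $X$ simple forces $\Tr(X)$ to have its minimal projective presentation interacting cleanly with the resolution of $X$, and $\HH_{\geqslant 0}$ of the dual vanishes by the totally-acyclic-type conditions built into $\LL_A$; the point is that shifting $F_X^\bt$ up by one converts the "$\Tr$ side" resolution of $X$ into the "$X$ side" of a complex of the same shape, which is again in $\LL_A$ precisely because $X^\ast \neq 0$ guarantees no truncation becomes degenerate. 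The main obstacle I anticipate is exactly this last step: verifying that simplicity of $X$ is what rules out the degenerate case where $F_X^\bt[1]$ has nonzero cohomology in a negative degree, and pinning down the precise homological reason $X^\ast \neq 0$ suffices to place $F_X^\bt[1]$ back in $\LL_A$ — essentially one must identify $F_X^\bt[1]$ with $F_Y^\bt$ for a suitable $Y$ (the cosyzygy/$\Tr$-type module) and check the defining conditions of \cref{Def:L-Category} hold for it, using that simple modules have $X^\ast \neq 0$ iff their injective hull is projective, which is where the hypothesis genuinely enters.
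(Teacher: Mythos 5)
Your arguments for $(1)\Leftrightarrow(2)$ (via the distinguished triangle $F_X^\bt\to 0\to F_X^\bt[1]\to$ and \cref{Prop:PerfSeq-DistTriang}) and for $(3)\Leftrightarrow(4)$ (via $F_X^{1,\ast}$ being the projective cover of $X^\ast$ and $(-)^\ast$ being a faithful equivalence on projectives) match the paper. The remaining two pieces, however, have genuine gaps.

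For $(2)\Rightarrow(3)$ you go in circles with the dualized short exact sequence and then try to route the argument through the homology of $(F_X^\bt[1])^\ast$. That side is the wrong place to look: one checks that, given $F_X^\bt\in\LL_A$, the dual-side condition $\HH_{\geqslant 0}\bigl((F_X^\bt[1])^\ast_\bt\bigr)=0$ reduces to $\HH_{\geqslant 1}(F_{X,\bt}^\ast)=0$, which is already guaranteed. The genuinely new condition imposed by $F_X^\bt[1]\in\LL_A$ is on the cohomology side, namely $\HH^0(F_X^\bt)=0$. Even so, $(2)\Rightarrow(3)$ is much more elementary than any of this: a perfect exact sequence $0\to X\to P\to Z\to 0$ with $P$ projective exhibits $X$ as a submodule of a projective, hence of some $A^n$, so a coordinate projection gives a nonzero element of $X^\ast$ whenever $X\neq 0$. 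That is why the paper says there is nothing to show.

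For $(3)\Rightarrow(1)$ when $X$ is simple you gesture at ``identifying $F_X^\bt[1]$ with some $F_Y^\bt$'' and at a claim that for simple $X$ one has $X^\ast\neq 0$ iff the injective hull of $X$ is projective. The latter equivalence is \emph{not} true for arbitrary finite dimensional algebras; it is established later only under $\ddim A\geqslant 1$, and it is not what simplicity is used for here. The key observation you are missing is the following: since $\tleq F_X^\bt$ is the minimal projective resolution of $X$, there is an inclusion $\HH^0(F_X^\bt)=\Ker(d^0)/\im(d^{-1})\subseteq F_X^0/\im(d^{-1})\simeq X$, and this inclusion is \emph{proper} as soon as $F_X^1\neq 0$, because then $d^{0,\ast}\colon F_X^{1,\ast}\to F_X^{0,\ast}$ is nonzero by minimality of the resolution of $\Tr(X)$ (equivalently, of $X^\ast$), hence $d^0\neq 0$ and $\Ker(d^0)\neq F_X^0$. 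For simple $X$, a proper submodule is zero, so $\HH^0(F_X^\bt)=0$, which is exactly the extra condition needed for $F_X^\bt[1]\in\LL_A$. Without this step your proof of the simple case does not close.
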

\begin{proof}
Suppose that $F_X^\bt[1] \in \LL_A$. By \cref{Prop:PerfSeq-DistTriang}, the distinguished triangle
\[
F_X^\bt \to 0 \to F_X^\bt[1] \to
\]
induces a perfect exact sequence $0 \to X \to P \to Z \to 0$ with $Z := \HH^0\big(\tleq (F^\bt[1])\big)$ and $P \in \proj A$.
On the other hand, any perfect exact sequence $0 \to X \to P \to Z \to 0$ with $P \in \proj A$ induces 
the following distinguished triangle in $\LL_A$.
\[
F_X^\bt \to 0 \to F_Z^\bt \to 
\]
In this case, $F_X^\bt[1] \simeq F_Z^\bt \in \LL_A$.
This shows the equivalence of (1) and (2).
There is nothing to show for the implication (2) $\Rightarrow$ (3).
Using that $\tau_{\geqslant 1} F_X^{\bt, \ast}$ is the minimal projective resolution of $X^\ast$, we obtain
the equivalence of (3) and (4).

Now, suppose that that $X$ is simple and $F_X^1 \neq 0$.
We have $\HH^0(F_X^\bt) \subseteq \Cok(F_X^{-1} \to F_X^0) \simeq X$. 
Since $F_X^1 \neq 0$, we know that $\Ker(F_X^0 \to F_X^1) \neq F_X^0$ so that the above inclusion is not trivial.
If $X$ is simple, we obtain $\HH^0(F_X^\bt) = 0$. This implies $F_X^\bt[1] \in \LL_A$.
\end{proof}

Finally, we give two methods to construct perfect exact sequences from existing ones.
We start with a construction via pushout and pullback which will be used to merge a perfect exact sequence
with an almost split sequence.
\begin{Lemma}\label{Lem:PerfectSeq-Via-PoPb}
Suppose given a  short exact sequence $0 \to X \xrightarrow{f} Y \xrightarrow{g} Z \to 0$ in $\rmod A$.
\begin{itemize}
\item[(1)] Let $0 \to X \xrightarrow{u} U \xrightarrow{v} V \to 0$ be a short exact sequence in $\rmod A$
           such that $f = u \, \alpha$ via a morphism $\alpha : U \to Y$. 
           Then there exists a short exact sequence such that the following diagram commutes.
           \[
            \begin{tikzcd}[row sep=1cm, column sep=1cm]
            0 \ar[r] & X \ar[r, "f"] \ar[d, "u"]         & Y \ar[r, "g"] \ar[d, "\tikzmatez{1}{0}"]     & Z \ar[r] \ar[d, equal]     & 0 \\
            0 \ar[r] & U \ar[r, "\tikzmatez{\alpha}{v}"] & Y \oplus V \ar[r, "\tikzmatze{g}{\beta}"]    & Z \ar[r]                   & 0 
            \end{tikzcd}                       
           \]
           If $0 \to X \xrightarrow{u} U \xrightarrow{v} V \to 0$ and the upper row of this diagram are perfect exact, 
           then so is the lower row.
\item[(2)] Let $0 \to U \xrightarrow{u} V \xrightarrow{v} Z \to 0$ be a short exact sequence in $\rmod A$
           such that $g = \alpha \, v$ via a morphism $\alpha : Y \to V$. 
           Then there exists a short exact sequence such that the following diagram commutes.
           \[
            \begin{tikzcd}[row sep=1cm, column sep=1cm]
            0 \ar[r] & X \ar[r, "\tikzmatez{\beta}{f}"] \ar[d, equal]   & U \oplus Y \ar[r, "\tikzmatze{u}{\alpha}"] \ar[d, "\tikzmatze{0}{1}"] & V \ar[r] \ar[d, "v"] & 0 \\
            0 \ar[r] & X \ar[r, "f"]                                    & Y \ar[r, "g"]                                                         & Z \ar[r]             & 0
            \end{tikzcd}                       
           \]
           If $0 \to U \xrightarrow{u} V \xrightarrow{v} Z \to 0$ and the lower row of this diagram are perfect exact, 
           then so is the upper row.
\end{itemize}
\end{Lemma}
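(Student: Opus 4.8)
The plan is to recognise each new sequence as a pushout, respectively pullback, of the given one --- matching the ``pushout and pullback'' framing announced for the lemma. For part~(1), the lower row is the pushout of $0 \to X \xrightarrow{f} Y \xrightarrow{g} Z \to 0$ along $u \colon X \to U$; the point is that, thanks to the factorisation $f = u\alpha$, the pushout of the span $Y \xleftarrow{f} X \xrightarrow{u} U$ can be identified with $Y \oplus V$. For part~(2), dually, the upper row is the pullback of $0 \to X \xrightarrow{f} Y \xrightarrow{g} Z \to 0$ along $v \colon V \to Z$, and the factorisation $g = \alpha v$ lets one identify $Y \times_Z V$ with $U \oplus Y$. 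In either case the comparison morphism of short exact sequences produced by the universal property is exactly the displayed diagram, with the indicated matrix entries; in particular $\beta$ is the component $V \to Z$ (resp.\ $X \to U$) of the induced map, pinned down by the requirement that the relevant square commute.

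Concretely, for part~(1) I would construct $\beta \colon V \to Z$ as the unique morphism with $v\beta = -\alpha g$: it exists because $v$ is epic with $\ker v = \im u$ and $u\alpha g = fg = 0$, so $-\alpha g$ vanishes on $\im u$. With this $\beta$, the morphism $(\alpha, v) \colon U \to Y \oplus V$ is monic (its kernel lies in $\ker v = \im u \cong X$, on which $\alpha$ is the monomorphism $f$), the morphism $\binom{g}{\beta} \colon Y \oplus V \to Z$ is epic (as $g$ already is), and $\ker\binom{g}{\beta} = \im(\alpha, v)$ by a routine chase using $\ker g = \im f$ together with $\ker v = \im u$; the two squares of the diagram commute because $f = u\alpha$, $uv = 0$ and $\binom{g}{\beta}$ restricts to $g$ on the first summand. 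Part~(2) is the mirror image: $\beta \colon X \to U$ is the unique morphism with $\beta u = -f\alpha$ (using that $u$ is monic and $f\alpha v = fg = 0$, so $f\alpha$ lands in $\im u$), and the same kind of chase gives that the upper row is short exact and that the displayed diagram commutes.

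For perfect exactness I would use the reformulation recalled after \cref{Def:PerfSeq}: a short exact sequence $0 \to X' \xrightarrow{\iota} Y' \to Z' \to 0$ is perfect exact iff every morphism $X' \to A$ factors through $\iota$. In part~(1), given $\phi \colon U \to A$, the morphism $u\phi \colon X \to A$ factors through $f$ as $u\phi = f\psi$ with $\psi \colon Y \to A$, since the upper row is perfect exact; then $\phi - \alpha\psi \colon U \to A$ vanishes on $\im u = \ker v$, hence factors through $v$ as $v\chi$ for some $\chi \colon V \to A$, and $\binom{\psi}{\chi} \colon Y \oplus V \to A$ satisfies $(\alpha, v)\binom{\psi}{\chi} = \phi$. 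In part~(2), given $\phi \colon X \to A$, perfect exactness of the lower row gives $\phi = f\psi$ with $\psi \colon Y \to A$, and since $(\beta, f)$ composed with the projection onto $Y$ equals $f$, the morphism $\binom{0}{\psi} \colon U \oplus Y \to A$ factors $\phi$ through $(\beta, f)$. (Equivalently, one may argue via $\Ext^1(-,A)$: perfect exactness of an extension means its class $\xi$ satisfies $\phi_\ast \xi = 0$ for every morphism $\phi$ from its left-hand term to $A$, and this property is visibly inherited by the pushout $u_\ast \xi$ and the pullback $v^\ast \xi$, since $\psi_\ast u_\ast = (u\psi)_\ast$ and $\phi_\ast v^\ast = v^\ast \phi_\ast$.)

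I do not anticipate a genuine obstacle: the argument is bookkeeping with two interlocking short exact sequences. The only steps that require care are the well-definedness of $\beta$ and keeping the orientations and signs of the matrix morphisms consistent so that the squares commute and the new row is a complex. It is perhaps worth noting that the transfer of perfect exactness uses only perfect exactness of the row containing $f$; the auxiliary short exact sequence enters purely through ordinary exactness, i.e.\ $\ker v = \im u$.
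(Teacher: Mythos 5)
Your proof is correct, but the step transferring perfect exactness follows a genuinely different route from the paper's. The paper, after constructing the new exact row, applies $\Hom_A(-,A)$ to \emph{both} given short exact sequences and then invokes the existence part of the dual statement (part~(2) read for left modules) to obtain the dualized version of the new row; since this forces the dualized auxiliary sequence $0 \to V^\ast \to U^\ast \to X^\ast \to 0$ to be exact, the paper's argument genuinely needs the auxiliary sequence to be perfect exact, which is why the lemma carries that hypothesis. You instead work directly from the characterization recalled after \cref{Def:PerfSeq}: given $\phi \colon U \to A$, lift $u\phi$ through $f$ using perfect exactness of the row containing $f$, then descend $\phi - \alpha\psi$ along $v$ using only that $v$ is the cokernel of $u$. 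As you correctly note, this uses the auxiliary sequence only through ordinary exactness, so it proves a strictly stronger statement than the lemma asserts. The parenthetical $\Ext^1$ version you sketch (the class of the new extension is $u_\ast\xi$, and $\psi_\ast u_\ast\xi = (u\psi)_\ast\xi = 0$ by perfect exactness of $\xi$) is an equally valid one-line form of the same observation. The existence of the new short exact sequence, via the pushout and pullback universal properties, is established in essentially the same way as in the paper.
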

\begin{proof}
First, we only proof the existence of a short exact sequence and afterwards check that this is a perfect exact sequence.
In both cases we show part (1) since part (2) follows dually.
We start by verifying that the following is a pushout-square.
\[
\begin{tikzcd}
X \ar[r, "f"] \ar[d, "u"]   & Y \ar[d, "\tikzmatez{1}{0}"] \\
U \ar[r, "\tikzmatez{\alpha}{v}"] & Y \oplus V
\end{tikzcd}
\]
By assumption, this diagram commutes. Suppose given $T \in \rmod A$ together with morphism $t_1 : Y \to T$ and $t_2 : U \to T$
such that $f \, t_1 = u \, t_2$.
We construct $\varphi : Y \oplus V \to T$ such that the following diagram commutes.
\[
\begin{tikzcd}[row sep=1cm, column sep=1cm]
X \ar[r, "f"] \ar[d, "u"]                                     & Y \ar[d, "\tikzmatez{1}{0}"] \ar[ddr, bend left ," t_1"] &   \\
U \ar[r, "\tikzmatez{\alpha}{v}"] \ar[drr, bend right, "t_2"] & Y \oplus V \ar[dr, dashed, "\varphi"]                    &   \\
                                                              &                                                          & T
\end{tikzcd}
\]
We have $u (t_2 - \alpha \, t_1) = u \, t_2 - f \, t_1 = 0$. Hence, there exists a unique $\beta : V \to T$ such that 
$v \, \beta = t_2 - \alpha \, t_1$.
\[
\begin{tikzcd}[column sep=1.5cm]
X \ar[r, "u"] & U \ar[r, "v"] \ar[d, "t_2 - \alpha \, t_1"] & V \ar[dl, dashed, bend left, "\beta"] \\
              & T                                           &
\end{tikzcd}
\]
This yields a unique $\varphi = \matze{t_1}{\beta}$ such that the diagram above commutes.
In conclusion, letting $T:= Z$, the pushout-square induces the following commutative diagram with exact rows.
 \[
            \begin{tikzcd}[row sep=1cm, column sep=1cm]
            0 \ar[r] & X \ar[r, "f"] \ar[d, "u"]         & Y \ar[r, "g"] \ar[d, "\tikzmatez{1}{0}"]     & Z \ar[r] \ar[d, equal]     & 0 \\
            0 \ar[r] & U \ar[r, "\tikzmatez{\alpha}{v}"] & Y \oplus V \ar[r, "\tikzmatze{g}{\beta}"]    & Z \ar[r]                   & 0 
            \end{tikzcd}                       
 \]
\textit{Perfect exact sequence.} Suppose that $0 \to X \xrightarrow{f} Y \xrightarrow{g} Z \to 0$ is a perfect exact sequence. 
That is,
$0 \to Z^\ast \xrightarrow{g^\ast} Y \xrightarrow{f^\ast} X^\ast \to 0$ is exact in $A$-mod.
If additionally, $0 \to V^\ast \xrightarrow{v^\ast} U^\ast \xrightarrow{u^\ast} X^\ast \to 0$ is exact,
we can apply (2) for left $A$-modules to obtain the following exact sequence.
\[
0 \to Z^\ast \xrightarrow{\matez{\beta^\ast}{g^\ast}} V^\ast \oplus Y^\ast \xrightarrow{\matze{v^\ast}{\alpha^\ast}} U^\ast \to 0
\]
Hence, $0 \to U \xrightarrow{\matez{\alpha}{v}} Y \oplus V \xrightarrow{\matze{g}{\beta}} Z \to 0$ is perfect exact.
\end{proof}

The next construction via the snake lemma will be used to reverse the process of the previous lemma.
\begin{Lemma} \label{Lem:PerfectSeq-Via-Snake}

The following holds.

\begin{itemize}
\item[(1)] Suppose given two short exact sequences in $\rmod A$ of the following form.
\begin{align*}
& 0 \to X \xrightarrow{\matez{s}{\iota}} U \oplus P \xrightarrow{\matze{t}{\pi}} V \to 0 \\
& 0 \to U \xrightarrow{\matez{v}{t}} Y \oplus V \xrightarrow{\matze{g}{w}} Z \to 0
\end{align*}
Then ${0 \to X \xrightarrow{\matez{s v}{\iota}} Y \oplus P \xrightarrow{\matze{g}{-\pi w}} Z \to 0}$
is a short exact sequence. If the given two sequences are perfect exact, then so is this sequence.

\item[(2)] Suppose given two short exact sequences in $\rmod A$ of the following form.
\begin{align*}
& 0 \to U \xrightarrow{\matez{s}{\iota}} V \oplus P \xrightarrow{\matze{t}{\pi}} Z \to 0 \\
& 0 \to X \xrightarrow{\matez{f}{v}} Y \oplus U \xrightarrow{\matze{w}{s}} V \to 0
\end{align*}
Then ${0 \to X \xrightarrow{\matez{f}{-v \iota}} Y \oplus P \xrightarrow{\matze{w t}{\pi}} Z \to 0}$
is a short exact sequence. If the given two sequences are perfect exact, then so is this sequence.
\end{itemize}
\end{Lemma}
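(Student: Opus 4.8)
The plan is to prove part (1) and then deduce part (2) by applying part (1) in the category $A$-mod of left modules (using that $(-)^\ast$ sends perfect exact sequences to short exact sequences and vice versa). For part (1), I would first produce the claimed short exact sequence by an explicit diagram chase, organising the two given sequences into a $3\times 3$ commutative diagram and invoking the snake lemma; then I would verify perfect exactness by dualising and recognising that the dualised diagram is again an instance of the same construction.

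\textbf{Step 1: the short exact sequence.} Consider the commutative diagram with exact columns
\[
\begin{tikzcd}[row sep=1cm, column sep=1.2cm]
0 \ar[r] & X \ar[r, "\tikzmatez{s}{\iota}"] \ar[d, equal] & U \oplus P \ar[r, "\tikzmatze{t}{\pi}"] \ar[d, "\tikzmatdd{v}{0}{t}{0}{0}{1}{0}{0}{0}" description, shift left=0] & V \ar[d] \\
0 \ar[r] & X \ar[r, "\tikzmatez{sv}{\iota}"] & Y \oplus P \ar[r, "\tikzmatze{g}{-\pi w}"] & Z
\end{tikzcd}
\]
where the vertical map in the middle uses the second given sequence. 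The point is that the composite $\tikzmatez{s}{\iota}$ followed by the middle vertical map equals $\tikzmatez{sv}{\iota}$, and that $\tikzmatze{t}{\pi}$ followed by $\matze{g}{w}$ (coming from the second sequence) matches up to sign. Concretely, I would check directly that with $f' := \matez{sv}{\iota}$ and $g' := \matze{g}{-\pi w}$ one has $f'$ injective (its first component $sv$ composed with $\iota$ already forces injectivity since $\matez{s}{\iota}$ is mono and $v$ is mono on the image of $s$, or more simply since $\iota$ is split mono), $f'g' = 0$, and exactness in the middle by chasing an element $(y,p)$ with $gy = \pi w \cdot p$ back through the two given sequences. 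The surjectivity of $g'$ follows since $g$ is surjective modulo the image of $w$, and $-\pi w$ hits the rest via $\pi$ surjective. This is the routine part.

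\textbf{Step 2: perfect exactness.} Assume both given sequences are perfect exact. Applying $(-)^\ast = \Hom_A(-,A)$ to the two given sequences yields two \emph{short exact} sequences in $A$-mod,
\[
0 \to V^\ast \xrightarrow{\matez{t^\ast}{\pi^\ast}} U^\ast \oplus P^\ast \xrightarrow{\matze{s^\ast}{\iota^\ast}} X^\ast \to 0,
\qquad
0 \to Z^\ast \xrightarrow{\matez{g^\ast}{w^\ast}} Y^\ast \oplus V^\ast \xrightarrow{\matze{v^\ast}{t^\ast}} U^\ast \to 0 .
\]
These are exactly the two input sequences for part (2) applied in $A$-mod (with the roles of $X,U,V,Y,Z,P$ suitably relabelled). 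Hence part (2) \emph{for left modules}, once established, gives a short exact sequence
\[
0 \to Z^\ast \xrightarrow{\matez{g^\ast}{-w^\ast\iota^\ast}} Y^\ast \oplus P^\ast \xrightarrow{\matze{\,?\,}{\,?\,}} X^\ast \to 0 ,
\]
and comparing with the dual of the sequence built in Step 1 shows that $\Hom_A(f',A)$ is surjective with the expected kernel, i.e.\ the sequence of Step 1 is perfect exact. To avoid circularity, I would in fact prove the underlying purely diagrammatic statement once, phrased symmetrically so that both (1) and (2), and both their module-level and dualised incarnations, are instances of it; then Step 2 is just bookkeeping.

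\textbf{Main obstacle.} The only real subtlety is keeping the signs and the block-matrix bookkeeping consistent: the snake connecting map is what forces the $-\pi w$ (resp.\ $-v\iota$) sign, and one must check that the \emph{same} signed maps appear when dualising, so that Step 1 and Step 2 are genuinely dual and not merely analogous. I expect this to be the part requiring care; everything else is a standard snake-lemma chase, and the reduction of (2) to (1) over the opposite ring is immediate once perfect exactness is translated via $(-)^\ast$.
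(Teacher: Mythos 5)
Your plan is essentially the paper's: construct the short exact sequence in part (1) via the snake lemma, deduce part (2) by duality, and obtain perfect exactness by dualizing the given sequences and invoking the existence statement of the other part. The circularity you worry about does not actually arise: the paper establishes the \emph{existence} of the short exact sequences in both parts first (part (1) directly, part (2) dually), then proves perfect exactness of part (1) using only part (2)'s existence claim, not its perfect-exactness claim, and then gets part (2)'s perfect exactness dually --- so the argument is already linear and no separate symmetric lemma is needed. One small remark on Step 1: the middle vertical map in your diagram is miswritten (a $3\times 3$ matrix between two-summand direct sums); the paper instead precomposes with an automorphism of $Y\oplus V\oplus P$ so that the columns of a $3\times 3$ snake diagram line up, and then reads off $\matez{sv}{\iota}$ as the connecting homomorphism and $\matze{g}{-\pi w}$ as the induced map of cokernels.
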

\begin{proof}
First, we only proof the existence of a short exact sequence and afterwards check that this is a perfect exact sequence.
In both cases we show part (1) since part (2) follows dually.
Note that there is an isomorphism of sequences
\[
\begin{tikzcd}[row sep=1.4cm, column sep=1.5cm]
0 \ar[r] & U \oplus P \ar[r, "\tikzmatzd{v}{t}{0}{0}{0}{1}"]   \ar[d, equal] & Y \oplus V \oplus P \ar[r, "\tikzmatde{g}{w}{0}"]       \ar[d, "\sim"' sloped,"\tikzmatdd{1}{0}{0}{0}{1}{0}{0}{\pi}{1}"] & Z \ar[r] \ar[d, equal] & 0 \\
0 \ar[r] & U \oplus P \ar[r, "\tikzmatzd{v}{t}{0}{0}{\pi}{1}"]               & Y \oplus V \oplus P \ar[r, "\tikzmatde{g}{w}{-\pi\,w}"]                                                                  & Z \ar[r]               & 0
\end{tikzcd}
\]
so that the lower sequence is exact as well.
Consider the following commutative diagram.
\[
\begin{tikzcd}[row sep=1.3cm, column sep=1cm, ampersand replacement=\&]
              \& 0 \ar[r]                                           \ar[d]        \& 0                   \ar[r]                        \ar[d]                                   \& X                 \ar[d, "\tikzmatez{s}{\iota}"] \&   \\    
0 \ar[r]      \& 0 \ar[r]                                           \ar[d]        \& U \oplus P          \ar[r, equal]                 \ar[d, "\tikzmatzd{v}{t}{0}{0}{\pi}{1}"] \& U \oplus P \ar[r] \ar[d, "\tikzmatze{t}{\pi}"]   \& 0 \\
0 \ar[r]      \& Y \oplus P \ar[r, "\tikzmatzd{1}{0}{0}{0}{0}{1}"]  \ar[d, equal] \& Y \oplus V \oplus P \ar[r, "\tikzmatde{0}{1}{0}"] \ar[d, "\tikzmatde{g}{w}{-\pi\,w}"]      \& V          \ar[r] \ar[d]                         \& 0 \\
              \& Y \oplus P \ar[r, "\tikzmatze{g}{-\pi\,w}"]                      \& Z                   \ar[r]                                                                 \& 0                                                \& 
\end{tikzcd}
\]
The snake lemma yields a short exact sequence 
\[
0 \to X \xrightarrow{\matez{s\,v}{\iota}} Y \oplus P \xrightarrow{\matze{g}{-\pi\,w}} Z \to 0.
\]
\textit{Perfect exact sequence.} Suppose that in part $(1)$ the sequences 
\begin{align*}
& 0 \to V^\ast \xrightarrow{\matez{t^\ast}{\pi^\ast}} U^\ast \oplus P^\ast \xrightarrow{\matze{s^\ast}{\iota^\ast}} X^\ast \to 0 \\
& 0 \to Z^\ast \xrightarrow{\matez{g^\ast}{w^\ast}} Y^\ast \oplus V^\ast \xrightarrow{\matze{v^\ast}{t^\ast}} U^\ast \to 0
\end{align*}
are exact. We can apply part $(2)$ for left $A$-modules to obtain the short exact sequence 
\[
0 \to Z^\ast \xrightarrow{\matez{g^\ast}{-w^\ast\pi^\ast}} Y^\ast \oplus P^\ast \xrightarrow{\matze{v^\ast s^\ast}{\iota^\ast}} X^\ast \to 0.
\]
Hence, the sequence $0 \to X \to Y \oplus P \to Z \to 0$ of part $(1)$ is perfect exact.
\end{proof}

\section{Perfect exact sequences and stable equivalences}
\label{Sec:PES_StableEquivalences}

This section is dedicated to examine what happens to perfect exact sequences under stable
equivalences $\stmod A \to \stmod B$.
Being perfect exact can be seen as a property of a given sequence in $\rmod A$.
For later use, we introduce the following shortened notion for stable equivalences that preserve this property.
\begin{Definition} \label{Def:PreservePerfSeq_StableEquiv}
Let $\eta : 0 \rightarrow X \xrightarrow{f} Y \oplus P \xrightarrow{g} Z \rightarrow 0$ be a perfect exact sequence in $\rmod A$
without split summands such that $P$ is projective and $Y$ has no projective direct summand.
We say that a functor $\alpha : \stmod A \to \stmod B$ \textit{preserves the perfect exact sequence} $\eta$
if there exists a perfect exact sequence
\[
0 \rightarrow \alpha(X) \xrightarrow{\tilde{f}} \alpha(Y) \oplus \tilde{P} \xrightarrow{\tilde{g}} \alpha(Z) \rightarrow 0
\]
in $\rmod B$ with $\tilde{P} \in \proj B$ such that $\underline{\tilde{f}} \simeq \alpha(\underline{f})$ 
and $\underline{\tilde{g}} \simeq \alpha(\underline{g})$ in $\stmod B$.
\end{Definition}

For now, we can show that a stable equivalence preserves perfect exact sequences
with projective middle term if the stable equivalence and its quasi-inverse are induced by an exact functor.
\begin{Proposition} \label{Prop:ExactsStableEquiv_ProjMiddleTerm}
Let ${}_A M_B$ be a bimodule which is projective as left $A$- and right $B$-module
such that $- \otimes_A M : \rmod A \to \rmod B$ is an exact functor which induces a stable equivalence $\stmod A \to \stmod B$.
\begin{itemize}
\item[(1)] The following are equivalent.
\begin{itemize}
\item[(i)] The functor $- \otimes_A M$ preserves perfect exact sequences with projective middle term.
            
\item[(ii)] For all $Z \in \rmod A$ we have $\Ext^1_B(Z \otimes_A M, B) = 0$ if $\Ext^1_A(Z,A) = 0$.
\end{itemize}

\item[(2)] If there is a bimodule ${}_B L_A$ which is projective as left $B$- and right $A$-module and which induces the inverse stable
equivalence, then the equivalent conditions of part (1) hold.
\end{itemize}
\end{Proposition}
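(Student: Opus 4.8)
Everything rests on \cref{Lem:Ext=0_PerfectSeq}, which says that a perfect exact sequence with projective middle term ending in $Z$ exists if and only if $\Ext_A^1(Z,A)=0$, and that in that case \emph{every} short exact sequence ending in $Z$ is perfect exact. For $(ii)\Rightarrow(i)$: given a perfect exact sequence $0\to X\xrightarrow{f}P\xrightarrow{g}Z\to 0$ with $P\in\proj A$ (i.e.\ $Y=0$ in \cref{Def:PreservePerfSeq_StableEquiv}), apply the exact functor $-\otimes_A M$; since $M_B$ is projective, the result is a short exact sequence $0\to X\otimes_A M\to P\otimes_A M\to Z\otimes_A M\to 0$ in $\rmod B$ with projective middle term. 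By \cref{Lem:Ext=0_PerfectSeq} the hypothesis gives $\Ext_A^1(Z,A)=0$, hence $\Ext_B^1(Z\otimes_A M,B)=0$ by $(ii)$, hence by \cref{Lem:Ext=0_PerfectSeq} over $B$ the new sequence is perfect exact. Its maps are $f\otimes_A M$ and $g\otimes_A M$, whose stable classes are $\alpha(\underline f)$ and $\alpha(\underline g)$, so $\alpha=-\otimes_A M$ preserves the sequence. For $(i)\Rightarrow(ii)$: let $\Ext_A^1(Z,A)=0$; splitting off the projective summands of $Z$ changes neither $\Ext_A^1(Z,A)$ nor the vanishing of $\Ext_B^1(Z\otimes_A M,B)$, so we may assume $Z$ has none. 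By \cref{Lem:Ext=0_PerfectSeq} there is a perfect exact sequence $0\to X\to P\to Z\to 0$ with $P$ a projective cover of $Z$, hence without split summands; applying $(i)$ and then \cref{Lem:Ext=0_PerfectSeq} over $B$ to the resulting perfect exact sequence gives $\Ext_B^1(Z\otimes_A M,B)=0$.

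\textbf{Part (2).} By part (1) it suffices to verify $(ii)$. Put $N:=\Hom_B(M,B)$. Since $-\otimes_A M$ is exact and carries $\proj A$ into $\proj B$, applying it to a projective resolution of $Z$ yields one of $Z\otimes_A M$; combined with the tensor--hom adjunction $\Hom_B(-\otimes_A M,B)\cong\Hom_A(-,N)$ this gives natural isomorphisms $\Ext_B^i(Z\otimes_A M,B)\cong\Ext_A^i(Z,N)$ for all $i$. Thus $(ii)$ becomes: $\Ext_A^1(Z,A)=0$ implies $\Ext_A^1(Z,N)=0$. To exploit $L$, write $W:=Z\otimes_A M$, so $Z\simeq W\otimes_B L$ in $\stmod A$; because $-\otimes_B L$ is exact and carries $\proj B$ into $\proj A$ (as $L_A$ is projective), the same reasoning with $L$ in place of $M$, together with the identification $\Hom_A(L,N)\cong\Hom_B(L\otimes_A M,B)$, reduces the assertion to a statement about $B$-modules whose coefficient bimodules are $L\otimes_A M$ and $N$. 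Concretely, one checks that applying $-\otimes_B L$ to the short exact sequence $0\to X\otimes_A M\to Q\otimes_A M\to Z\otimes_A M\to 0$ arising from a perfect exact sequence $0\to X\to Q\to Z\to 0$ with $Q\in\proj A$ lands at a \emph{perfect exact} sequence over $A$ — its end term $Z\otimes_A M\otimes_B L\simeq Z$ has $\Ext_A^1(-,A)=0$, and $\Ext_A^1(-,A)$ is a stable invariant in the first argument — and one then transports this back along $-\otimes_A M$, using that $(L\otimes_A M)_B$ is projective. This identifies $\Ext_B^1(Z\otimes_A M,B)$ with an $\Ext$-group that vanishes, giving $(ii)$.

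\textbf{Where the difficulty lies.} The delicate point is entirely in part (2): controlling the $B$-dual $N=\Hom_B(M,B)$ (equivalently, $L\otimes_A M$) by means of $L$. Morally $N$ and $L$ both realize the inverse stable equivalence, so one would like to say $N\simeq L$ in $\stmod A$ and be done; but the adjunction $-\otimes_A M\dashv\Hom_B(M,-)$ lives only at the level of module categories, and $\Hom_B(M,-)$ descends to the stable categories precisely when the relevant dual is known to be projective on the appropriate side — so $N$ cannot simply be identified with the quasi-inverse applied to $B$, and indeed $N_A$ need not be projective in general. What \emph{is} genuinely available is the \emph{left} adjoint $-\otimes_B\Hom_A(M,A)$ of $-\otimes_A M$, which does descend because $\Hom_A(M,A)$ is projective as a right $A$-module; the hypothesis that the inverse is realized by a concrete bimodule $L$ projective on both sides — not merely by an abstract functor — is exactly what lets one pin $N$ down against this and push the Ext-vanishing through. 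Making this comparison precise, and in particular verifying the transport of perfect exactness back along $-\otimes_A M$, is the step that requires the real care.
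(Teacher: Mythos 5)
Your Part (1) is correct and essentially identical to the paper's argument: both directions reduce to \cref{Lem:Ext=0_PerfectSeq} together with the observation that $-\otimes_A M$ sends projectives to projectives.

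Part (2), however, has a genuine gap. The reduction $\Ext_B^i(Z\otimes_A M,B)\cong\Ext_A^i(Z,N)$ with $N=\Hom_B(M,B)$ is correct, and your identification of the core difficulty (controlling $N$ without knowing $N_A$ is projective) is accurate. But the proposed resolution --- tensoring the sequence $0\to X\otimes_A M\to Q\otimes_A M\to Z\otimes_A M\to 0$ with $L$ to get a perfect exact sequence over $A$, then ``transporting back along $-\otimes_A M$'' --- is circular. After applying $-\otimes_B L$ you do obtain a perfect exact sequence over $A$ (its end term is stably $Z$, so $\Ext^1_A(-,A)$ vanishes and \cref{Lem:Ext=0_PerfectSeq} applies). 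But to conclude anything about perfect exactness in $\rmod B$ from this, you would then need $-\otimes_A M$ to preserve perfect exact sequences with projective middle term --- which is precisely condition (i), the thing being proved. Nothing in the sketch replaces this with an argument that is actually available.

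The paper avoids this by not attempting to transport a \emph{sequence} back at all. Given an arbitrary extension $0 \to B \to Y' \xrightarrow{g'} Z\otimes_A M \to 0$ in $\Ext^1_B(Z\otimes_A M,B)$, one applies the exact functor $-\otimes_B L$; the end term is stably $Z$ and the kernel $L_A$ is projective, so $\Ext^1_A(Z,A)=0$ forces $g'\otimes_B L$ to be a split epimorphism with projective kernel, hence a stable isomorphism. Applying the stable equivalence $-\otimes_A M$ (which only needs to preserve \emph{stable isomorphisms}, not perfect exactness) and using $-\otimes_B L\otimes_A M \simeq \id_{\stmod B}$ shows $g'$ itself is a stable isomorphism; since $g'$ is surjective with kernel the projective module $B$, it splits. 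Thus $\Ext^1_B(Z\otimes_A M,B)=0$. The crucial shift in perspective, compared to your sketch, is from ``transport perfect exactness'' to ``split a single extension using only the stable equivalence of morphisms,'' which sidesteps the circularity entirely.
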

\begin{proof}
\textit{Ad (1).}
Suppose given $Z \in \rmod A$ with $\Ext^1(Z,A) = 0$.
We have seen in \cref{Lem:Ext=0_PerfectSeq} that in this case there exists a perfect exact sequence ending in $Z$ with
projective middle term. Thus, (i) implies that there also is a perfect exact sequence in $\rmod B$ with ending
term $Z \otimes_A M$ and projective middle term. Using \cref{Lem:Ext=0_PerfectSeq} again, we obtain 
$\Ext^1_B(Z \otimes_A M, B) = 0$. This shows the implication (i) $\Rightarrow$ (ii). 

On the other hand, suppose that $0 \to X \to P \to Z \to 0$ is a perfect exact sequence in $\rmod A$ with $P \in \proj A$.
Then $\Ext^1(Z,A) = 0$ so that (ii) implies $\Ext^1_B(Z \otimes_A M, B) = 0$.
Now, by \cref{Lem:Ext=0_PerfectSeq}, every short exact sequence ending in $Z \otimes_A M$ is perfect exact.
In particular, this holds for the induced short exact sequence 
$0 \to X \otimes_A M \to P \otimes_A M \to Z \otimes_A M \to 0$.
Therefore, the implication from (ii) to (i) holds as well.

\textit{Ad (2).} We show condition (ii) of part (1).
Suppose given $Z \in \rmod A$ with $\Ext^1(Z,A) = 0$. We write $Z' := Z \otimes_A M \in \rmod B$.
Let $0 \to B \to Y' \xrightarrow{g'} Z' \to 0$ be a short exact sequence in $\Ext^1_B(Z \otimes_A M, B)$.
Since $- \otimes_B L$ is exact, we obtain the following short exact sequence.
\[
0 \to B \otimes_B L \to Y' \otimes_B L \xrightarrow{g' \otimes L} Z' \otimes_B L \to 0
\]
Note that $Z' \otimes_B L = Z \otimes_A M \otimes_B L \simeq Z$ in $\stmod A$
and $B \otimes_B L_A \simeq L_A \in \proj A$. Using that $\Ext^1_A(Z,A) = 0$,
this implies that $g' \otimes_B L$ is a split epimorphism with projective kernel. 
Thus, $g' \otimes_B L$ is a stable isomorphism. 
As a consequence, $g' \otimes_B L \otimes_A M$ is a stable isomorphism as well. 
Using that  $g' \otimes_B L \otimes_A M \simeq g'$ in $\stmod B$ and that $g'$ is surjective,
we obtain that $g'$ is a split epimorphism.
In conclusion, $\Ext^1_B(Z \otimes_A M, B) = 0$.
\end{proof}

The situation is better for stable equivalences of Morita type.
\begin{Lemma} \label{Lem:StableEquivMoritaType_PerfSeq}
Suppose ${}_A M_B$ and ${}_B N_A$ are bimodules that induce a stable equivalence of Morita type 
such that $M$ and $N$ do not have any non-zero projective bimodule as direct summand.

If $0 \to X \xrightarrow{f} Y \xrightarrow{g} Z \to 0$ is a perfect exact sequence in $\rmod A$, then 
\[
0  \to X \otimes_A M \xrightarrow{f \otimes M} Y \otimes_A M \xrightarrow{g \otimes M} Z \otimes_A M \to 0
\] 
is a perfect exact sequence in $\rmod B$.
Similarly, the functor $- \otimes_B N$ maps perfect exact sequences in $\rmod B$ to perfect exact sequences in $\rmod A$.
\end{Lemma}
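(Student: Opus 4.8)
The plan is to exploit the defining property of a stable equivalence of Morita type, namely that $M \otimes_B N \simeq A \oplus P$ and $N \otimes_A M \simeq B \oplus Q$ with $P, Q$ projective bimodules, together with the fact that $M_B$ and ${}_A M$ are projective (so $- \otimes_A M$ is exact and preserves projectives). First I would note that since $M_B$ is projective, applying $- \otimes_A M$ to the short exact sequence $0 \to X \xrightarrow{f} Y \xrightarrow{g} Z \to 0$ yields a short exact sequence in $\rmod B$, so only perfectness needs to be checked. By \cref{Def:PerfSeq}, this amounts to showing that $0 \to (Z \otimes_A M)^\ast \to (Y \otimes_A M)^\ast \to (X \otimes_A M)^\ast \to 0$ is exact in $B$-mod, equivalently (since $\Hom_B(-,B)$ is right exact and the sequence is always exact on the right by applying $\Hom_B(-,B)$) that $\Hom_B(g \otimes M, B)$ is injective — but more robustly, that the connecting $\Ext$ behaves well.

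The key computational step is to identify $\Hom_B(W \otimes_A M, B)$ for $W \in \rmod A$. Since $M_B$ is projective (hence $W \otimes_A M$ is a direct summand of a free $B$-module when $W$ is free, and in general one has a hom-tensor adjunction in the relevant derived sense), I would establish a natural isomorphism
\[
\Hom_B(W \otimes_A M, B) \simeq \Hom_A(W, \Hom_B(M,B)) = \Hom_A(W, N')
\]
where $N' := \Hom_B(M,B)$ as an $A$-$B$-bimodule — wait, this lands in left modules, so more precisely $(W \otimes_A M)^\ast \simeq \Hom_A(W, {}_A N'{}_B)^{\mathrm{op}}$-style; the cleaner route is to use that $N \simeq \Hom_B(M,B)$ as bimodules (a standard fact for stable equivalences of Morita type with no projective bimodule summands, or else replace $N$ by $\Hom_B(M,B)$ throughout), so that $(W \otimes_A M)^\ast \simeq W^\ast \otimes_A M'$ for a suitable bimodule, OR directly: $\Hom_B(W \otimes_A M, B) \simeq \Hom_A(W, \Hom_B(M, B))$ and then $\Hom_B(M,B) \otimes_A - $ relates back to $N$. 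The upshot I want is that applying $\Hom_B(-, B)$ to the $M$-twisted sequence is, up to the natural isomorphism above, the same as applying $\Hom_A(-, N)$ to the original sequence. Since $N_A$ is projective, $\Hom_A(-, \text{projective right module } ?)$... — here I must be careful: $\Hom_A(-, N)$ with ${}_B N_A$ is exact on short exact sequences precisely when $N$ is injective as a right... no. The correct observation is that $\Hom_A(-, N) \simeq \Du(- \otimes_A \Du N)$ is not quite it either; instead, use ${}_BN_A$ projective as a \emph{right} $A$-module means $N_A$ is a summand of $A^n$, hence $\Hom_A(X, N_A)$ is a summand of $\Hom_A(X, A)^n = (X^\ast)^n$, so the sequence $0 \to \Hom_A(Z, N) \to \Hom_A(Y,N) \to \Hom_A(X,N) \to 0$ is a summand of the $n$-fold sum of $0 \to Z^\ast \to Y^\ast \to X^\ast \to 0$, which is exact by the hypothesis that $0 \to X \to Y \to Z \to 0$ is perfect exact. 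Hence exactness of the $N$-valued sequence, hence (via the natural iso) perfectness of the $M$-twisted sequence.

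The main obstacle I anticipate is the bookkeeping in the natural isomorphism $\Hom_B(W \otimes_A M, B) \simeq \Hom_A(W, \Hom_B(M,B))$ and the identification $\Hom_B(M,B) \cong {}_A N_B$ as bimodules: one needs $M_B$ projective for the first (standard adjunction, valid since then $M \otimes_A -$ and its right adjoint behave), and for the second one invokes that $M, N$ induce a stable equivalence of Morita type with no projective bimodule summands — this forces $N \simeq \Hom_B(M,B)$ (this is a known fact; alternatively, as in \cref{IntThm:EquivOnL_MoritaType}, one may simply \emph{take} $N = \Hom_B(M,B)$, which the hypotheses of the lemma permit up to replacing $N$). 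Once these two identifications are in hand, the argument reduces to the summand-of-$A^n$ observation above, which is routine. The final sentence of the lemma, that $- \otimes_B N$ maps perfect exact sequences in $\rmod B$ to perfect exact sequences in $\rmod A$, follows by the same argument with the roles of $(A, M)$ and $(B, N)$ interchanged, using ${}_B N$ and $N_A$ projective and $\Hom_A(N, A) \simeq M$ as bimodules.
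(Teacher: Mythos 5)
Your argument is correct. The paper's proof is slightly shorter but relies on the same essential ingredients: it cites the natural isomorphism $(U \otimes_A M)^\ast \simeq N \otimes_A U^\ast$ from Dugas--Mart\'inez-Villa (Lemma~3.3) and Chen--Pan--Xi (Lemma~4.1), and then concludes by exactness of $N \otimes_A -$ (which holds because $N_A$ is projective, hence flat). You instead build the key isomorphism yourself from the tensor--hom adjunction $(W \otimes_A M)^\ast = \Hom_B(W \otimes_A M, B) \simeq \Hom_A(W, \Hom_B(M,B)) \simeq \Hom_A(W, N)$, using the standard identification $N \simeq \Hom_B(M,B)$ for a stable equivalence of Morita type with no projective bimodule summands, and then finish with the ``summand of $A^n$'' trick: since $N_A$ is finitely generated projective, the sequence $0 \to \Hom_A(Z,N) \to \Hom_A(Y,N) \to \Hom_A(X,N) \to 0$ is a natural direct summand of the $n$-fold sum of the exact sequence $0 \to Z^\ast \to Y^\ast \to X^\ast \to 0$, hence exact. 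This is a legitimate alternative: it avoids citing the nontrivial isomorphism $(U\otimes_A M)^\ast \simeq N \otimes_A U^\ast$ as a black box, at the cost of making explicit the identification $N \simeq \Hom_B(M,B)$ (which the cited lemmas also implicitly depend on). Both routes use projectivity of $N_A$, just in different guises (flatness of $N\otimes_A -$ vs.\ the summand argument). One small stylistic note: you needn't worry about whether the summand decomposition is $B$-linear --- exactness of a sequence of left $B$-modules may be checked after forgetting to $k$-modules, and there the summand argument is unambiguous.
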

\begin{proof}
Let $\eta : 0 \to X \xrightarrow{f} Y \xrightarrow{g} Z \to 0$ be a perfect exact sequence in $\rmod A$.
Since $- \otimes_A M$ is an exact functor, 
$0  \to X \otimes_A M \xrightarrow{f \otimes M} Y \otimes_A M \xrightarrow{g \otimes M} Z \otimes_A M \to 0$
is a short exact sequence in $\rmod B$.
By \cite[Lemma 3.3]{DugasMartinezVilla_MoritaType} and \cite[Lemma 4.1]{ChenPanXi_MoritaType}, 
there exist natural isomorphisms $(U \otimes_A M)^\ast \simeq N \otimes_A U^\ast$ for all $U \in \rmod A$.
Thus, the following diagram commutes.
\[
\begin{tikzcd}
0 \ar[r] & (Z \otimes_A M)^\ast \ar[r, "(g \otimes M)^\ast"] \ar[d, "\sim" sloped] & (Y \otimes_A M)^\ast \ar[r, "(f \otimes M)^\ast" sloped] \ar[d, "\sim" sloped] & (X \otimes_A M)^\ast \ar[r] \ar[d, "\sim" sloped] & 0 \\
0 \ar[r] & N \otimes_A Z^\ast \ar[r, "N \otimes g^\ast"]                           & N \otimes_A Y^\ast \ar[r, "N \otimes f^\ast"]                                  & N \otimes_A X^\ast \ar[r]                         & 0
\end{tikzcd}
\]
Since $N \otimes_A -$ is an exact functor and $\eta$ perfect exact, the lower sequence is exact. 
This implies that the upper sequence is exact as well.
Consequently, 
\[
0  \to X \otimes_A M \xrightarrow{f \otimes M} Y \otimes_A M \xrightarrow{g \otimes M} Z \otimes_A M \to 0
\]
is a perfect exact sequence.
\end{proof}

For an arbitrary stable equivalence to preserve perfect exact sequences, we have to at least exclude short exact sequences 
that start with a node. We recall the definition of a node.
See \cite{MartinezVilla_l-hereditary} for more details.
\begin{Definition}\label{Def:Node}
A simple $A$-module $S$ is called a \textit{node} if it is neither projective nor injective
and the middle term $E_S$ of the almost split sequence starting in $S$ is projective.
\end{Definition}

We will use the following immediate characterization of a node; cf.\ \cite[Proposition 2.5.(a)]{AuslanderReitenVI}.
\begin{Lemma} \label{Lem:Char-Node}
Suppose given an almost split sequence with $X$ not injective.
\[
0 \to X \xrightarrow{f} E_X \xrightarrow{g} Z \to 0
\]
\begin{itemize}
\item[(1)] We have $\underline{f} = 0$ in $\stmod A$ if and only if $X$ or $E_X$ is projective.

\item[(2)] Suppose that $X$ is simple and not projective. Then $X$ is a node if and only if $\underline{f} = 0$.
\end{itemize}
\end{Lemma}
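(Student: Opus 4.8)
The plan is to prove part~(1) directly, by a factorization argument that exploits the fact that $f$ is left almost split, and then to read off part~(2) from part~(1) together with the definition of a node. The ``if'' direction of~(1) is immediate: if $X$ is projective then $f = \id_X \cdot f$ exhibits $f$ as factoring through the projective module $X$, and if $E_X$ is projective then $f = f \cdot \id_{E_X}$ factors through the projective module $E_X$; in either case $\underline{f} = 0$.

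For the converse, suppose $\underline{f} = 0$, so that $f = f_1 f_2$ for some $f_1 : X \to P$ and $f_2 : P \to E_X$ with $P \in \proj A$. Since $f$ is left almost split, either $f_1$ is a split monomorphism, in which case $X$ is a direct summand of $P$ and hence projective, or $f_1$ factors through $f$, say $f_1 = f h$ with $h : E_X \to P$. In the latter case I would put $\varphi := h f_2 : E_X \to E_X$, so that $f\varphi = f_1 f_2 = f$ and hence $f(\id_{E_X} - \varphi) = 0$. As $\im f = \Ker g$, the endomorphism $\id_{E_X} - \varphi$ vanishes on $\im f$ and therefore factors through $g$, say $\id_{E_X} - \varphi = g\psi$ with $\psi : Z \to E_X$. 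The crux is to show $\varphi$ is an automorphism of $E_X$. First, $\psi g \in \operatorname{End}_A(Z)$ cannot be an automorphism, since then $(\psi g)^{-1}\psi$ would be a section of $g$, contradicting that an almost split sequence does not split. As $Z = \tau^{-1}(X)$ is indecomposable, $\operatorname{End}_A(Z)$ is local, so $\psi g$ lies in its radical and is thus nilpotent, say $(\psi g)^m = 0$; then $(g\psi)^{m+1} = g(\psi g)^m \psi = 0$, so $g\psi$ is nilpotent and $\varphi = \id_{E_X} - g\psi$ is invertible. Since $\varphi = h f_2$ factors through $P$, the map $h$ is a split monomorphism, whence $E_X$ is a direct summand of $P$ and therefore projective. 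This proves~(1).

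For part~(2), $X$ is simple and non-projective by hypothesis and non-injective by the standing assumption of the lemma, so by definition $X$ is a node precisely when $E_X$ is projective. By part~(1), $\underline{f} = 0$ if and only if $X$ or $E_X$ is projective; since $X$ is not projective, this holds if and only if $E_X$ is projective, i.e.\ if and only if $X$ is a node.

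The only point I expect to require care is the case distinction ``$f_1$ is split monic or factors through $f$'' and, within the second case, verifying that the endomorphism $g\psi$ of $E_X$ is genuinely nilpotent even though $E_X$ need not be indecomposable; the device is to transfer nilpotency to $\operatorname{End}_A(Z)$ with $Z$ indecomposable via the identity $(g\psi)^k = g(\psi g)^{k-1}\psi$.
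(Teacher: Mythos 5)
Your proof is correct. Note that the paper does not prove this lemma; it simply cites \cite[Proposition~2.5.(a)]{AuslanderReitenVI} and moves on, treating the characterization as immediate. Your argument is a sound self-contained proof along the standard Auslander--Reiten lines: the ``if'' direction is trivial factorization, and for the converse you exploit that $f$ is left almost split to reduce to the case $f_1 = fh$, producing an endomorphism $\varphi = hf_2$ of $E_X$ with $f\varphi = f$; the key device of transferring to $\operatorname{End}_A(Z)$, where $Z$ is indecomposable so that $\psi g$ is nilpotent, and then pulling nilpotency back to $g\psi$ via $(g\psi)^{m+1} = g(\psi g)^m\psi$ is exactly the right way to handle the fact that $E_X$ itself need not be indecomposable. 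Part~(2) then reads off correctly from part~(1) and the definition of a node. The one place I would phrase slightly more carefully is the dichotomy ``either $f_1$ is a split monomorphism or it factors through $f$'': the left almost split property gives that any \emph{non}-split-mono factors through $f$, which is what you use, but a split mono could in principle also factor through $f$; this does no harm since you only need the disjunction, but it is worth stating as ``$f_1$ is a split monomorphism, or else (being a non-split mono) it factors through $f$'' to make clear the dichotomy is not exclusive.
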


We also use that a node cannot be a direct summand of the middle term in an almost split sequence.    
\begin{Lemma} \label{Lem:NodeAsMiddleTermOfAss}
Suppose given an almost split sequence $0 \to X \xrightarrow{f} E_X \xrightarrow{g} Z \to 0$ in $\rmod A$.

The middle term $E_X$ does not have a node as a direct summand.
\end{Lemma}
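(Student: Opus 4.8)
The plan is to argue by contradiction. Suppose some node $N$ is a direct summand of $E_X$, write $E_X \cong N \oplus E'$, and decompose $g = \binom{g_1}{g_2}$ accordingly, so that $g_1 : N \to Z$ is the component of the almost split epimorphism $g$ corresponding to the summand $N$. Recall the standard fact that the components of a minimal right almost split morphism are irreducible; in particular $g_1$ is irreducible, hence nonzero. Since $N$ is a node it is simple, so a nonzero morphism out of $N$ has zero kernel; thus $g_1$ is a monomorphism. Moreover $N$ is not injective, as nodes are by definition non-injective, so the almost split sequence $0 \to N \to E_N \to \tau^{-1}(N) \to 0$ starting in $N$ exists, and $E_N \in \proj A$ by the definition of a node.

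The key step is then to observe that an irreducible monomorphism $g_1 : N \to Z$ with $N$ non-injective forces $Z$ to be isomorphic to a direct summand of $E_N$. Indeed, $g_1$ is not a split monomorphism (being irreducible), so by the left almost split property of $N \to E_N$ it factors as $g_1 = (N \to E_N) \circ h$ for some $h : E_N \to Z$; since $N \to E_N$ is not a split epimorphism, irreducibility of $g_1$ makes $h$ a split epimorphism, whence $Z$ is a direct summand of $E_N$. As $E_N$ is projective, so is $Z$. This contradicts the fact that the end term $Z = \tau^{-1}(X)$ of an almost split sequence is (indecomposable and) non-projective. Hence no node can be a direct summand of $E_X$.

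I expect no real obstacle here; the only point requiring care is the auxiliary fact invoked in the second paragraph, namely that an irreducible monomorphism out of a non-injective module lands, up to direct summands, in the middle term of the almost split sequence starting at that module. This is either quoted from the literature on irreducible morphisms or proved in two lines from the almost split property as sketched above. One may also phrase the first paragraph via \cref{Lem:Char-Node} by noting that $g_1$ being a monomorphism between modules of distinct isomorphism type rules out $g_1$ being an isomorphism, but this is not needed for the argument.
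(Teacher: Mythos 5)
Your argument matches the paper's proof: restrict the minimal right almost split epimorphism $g$ to the node summand $N$ to obtain an irreducible morphism $g_1 : N \to Z$, factor $g_1$ through the left almost split map $s : N \to E_N$ of the almost split sequence starting in $N$, and use irreducibility to deduce that the second factor is a split epimorphism, so $Z$ is (up to iso a direct summand of, hence equal to) the projective module $E_N$, contradicting that the end term of an almost split sequence is non-projective. The side observations that $g_1$ is nonzero and a monomorphism (because $N$ is simple) are correct but, as you yourself note, not used. One small wording slip: when concluding from irreducibility of $g_1 = s\,h$ that $h$ is a split epimorphism, the hypothesis you need is that $s$ is not a split \emph{monomorphism} (true because the almost split sequence starting at $N$ does not split), not that $s$ fails to be a split epimorphism; the conclusion is unaffected.
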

\begin{proof}
Assume that $\iota : N \hookrightarrow E_X$ is is the embedding of a direct summand such that $N$ is a node.
We have an almost split sequence 
\[
0 \to N \xrightarrow{s} E_N \xrightarrow{t} \tau^{-1}(N) \to 0
\]
starting in $N$ with $E_N$ projective. 
The morphism $\iota\, g : N \to Z$ factors through $s$ via a morphism $u : E_N \to Z$, that is $\iota\, g = s\, u$.
Since $\iota\, g$ and $s$ are irreducible, we obtain that $u$ is a split monomorphism.
Thus, $E_N$ is a projective direct summand of $Z$. A contradiction.
\end{proof}

In \cite[Proposition 2.4 and 3.5]{AuslanderReitenVI}, Auslander and Reiten provide the following results
for the case of short exact sequences using functor categories. 
The first part was later generalized to a larger class of short exact sequences in \cite[Theorem 1.7]{MartinezVilla_Properties}
by Mart\'inez-Villa.
\begin{Proposition}[Auslander, Reiten]
\label{Prop:AssUnderStableEquiv}
Let $\alpha : \stmod A \to \stmod B$ be a stable equivalence.

Let $0 \to X \xrightarrow{f} Y \oplus P \xrightarrow{g} Z \to 0$ be a short exact sequence in $\rmod A$ 
without split summands such that $X$ is indecomposable, $P \in \proj A$ and $Y$ has no projective direct summand.

If $X$ is not a node and not projective, there exists a short exact sequence
\[
0 \to \alpha(X) \xrightarrow{\tilde{f}} \alpha(Y) \oplus \tilde{P} \xrightarrow{\tilde{g}} \alpha(Z) \to 0
\]
in $\rmod B$ with $\tilde{P} \in \proj B$ such that $\alpha(f) \simeq \tilde{f}$ and $\alpha(g) \simeq \tilde{g}$ in $\stmod B$.

If $0 \to X \xrightarrow{f} Y \oplus P \xrightarrow{g} Z \to 0$ is an almost split sequence in $\rmod A$, 
then we obtain an almost split sequence
$0 \to \alpha(X) \xrightarrow{\tilde{f}} \alpha(Y) \oplus \tilde{P} \xrightarrow{\tilde{g}} \alpha(Z) \to 0$ 
in $\rmod B$.
\end{Proposition}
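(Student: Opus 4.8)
The plan is to transport the sequence through the category $\mathrm{mod}(\stmod A)$ of finitely presented additive functors on the stable module category; this is the functorial method of Auslander and Reiten. Applying $\Hom_A(-,-)$ to the sequence $0 \to X \xrightarrow{f} Y \oplus P \xrightarrow{g} Z \to 0$ yields an exact sequence of functors $\Hom_A(-,Y \oplus P) \xrightarrow{(-,g)} \Hom_A(-,Z) \to C \to 0$, and since $g$ is an epimorphism the functor $C$ vanishes on $\proj A$, hence descends to a finitely presented functor on $\stmod A$ with presentation
\[
\underline{\Hom}_A(-,X) \xrightarrow{(-,f)} \underline{\Hom}_A(-,Y) \xrightarrow{(-,g)} \underline{\Hom}_A(-,Z) \to C \to 0 ;
\]
note that the summand $P$ of the middle term disappears because $\underline{\Hom}_A(-,P) = 0$. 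First I would show that, after suitably normalising $f$ and $g$ (using that the sequence has no split summands and that $X$ is indecomposable), the functor $C$ together with this presentation determines the sequence up to split summands and a projective direct summand of the middle term; the relevant input is that $\underline{\Hom}_A(-,U)$ is an indecomposable projective object of $\mathrm{mod}(\stmod A)$ precisely when $U$ is indecomposable and non-projective.

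Next I would use that $\alpha : \stmod A \to \stmod B$, being an equivalence, induces by precomposition with a quasi-inverse an equivalence $\mathrm{mod}(\stmod A) \xrightarrow{\sim} \mathrm{mod}(\stmod B)$ that sends $\underline{\Hom}_A(-,U)$ to $\underline{\Hom}_B(-,\alpha U)$ and preserves projective covers. Transporting $C$ and its presentation, and then reading the result back in $\rmod B$ by lifting the relevant map of representables to an honest morphism $\alpha Y \to \alpha Z$, adjoining a projective cover of its cokernel to make it surjective, and passing to the kernel, produces a short exact sequence $0 \to \alpha X \xrightarrow{\tilde f} \alpha Y \oplus \tilde P \xrightarrow{\tilde g} \alpha Z \to 0$ in $\rmod B$ with $\tilde P \in \proj B$ and $\underline{\tilde f} \simeq \alpha(\underline f)$, $\underline{\tilde g} \simeq \alpha(\underline g)$, where the representatives of $\alpha X, \alpha Y, \alpha Z$ are chosen without projective summands. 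For the almost split case I would additionally note that being almost split is detected in the functor category (then $C$ is a simple functor), and an equivalence of functor categories preserves simple objects, so the transported sequence is again almost split.

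The hypothesis that $X$ is neither projective nor a node is what forces $\alpha X$ to reappear at the left end of the reconstructed sequence. If $X$ were a node, then its middle term $E_X$ is projective and every non-split monomorphism out of $X$ factors through $E_X$, so $\underline f = 0$ and the map $\underline{\Hom}_A(-,X) \to \underline{\Hom}_A(-,Y)$ vanishes; the presentation then degenerates, $C$ becomes representable, and no short exact sequence beginning in $\alpha X$ can be recovered. This is exactly the content of \cref{Lem:Char-Node}. I expect the genuine difficulty to lie in the bookkeeping of projective direct summands in the passage from $\mathrm{mod}(\stmod B)$ back to $\rmod B$: the liftings of stable morphisms and the splittings off of projective summands are not canonical, so one must verify that any two short exact sequences in $\rmod B$ with the same induced functor-category data differ only by split summands and a projective summand in the middle term, and in particular that $\alpha X$ is not absorbed into the middle term. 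Once this and the functor-category dictionary are in place, the rest is formal.
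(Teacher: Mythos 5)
The paper does not prove this proposition; it quotes it from Auslander--Reiten \cite[Propositions 2.4 and 3.5]{AuslanderReitenVI} and Mart\'inez-Villa \cite[Theorem 1.7]{MartinezVilla_Properties}, and explicitly attributes to those sources the use of functor categories. Your sketch reconstructs exactly that functorial method (transport of the finitely presented functor $C$ through the induced equivalence $\rmod(\stmod A)\simeq\rmod(\stmod B)$), so there is no divergence from the paper's route to report. Your reading of the non-node, non-projective hypothesis---that it forces $\underline{f}\ne 0$, so $\alpha(X)$ genuinely reappears as the first syzygy of the transported presentation rather than being absorbed---is the correct one, and the projective-summand bookkeeping you flag as the substantive difficulty is precisely where the cited proofs do most of their work.
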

We aim to prove a similar result and show that certain stable equivalences preserve perfect exact sequences.
However, our method follows an algorithmic approach. 

First, we construct a chain of perfect exact sequences $\eta_0 \to \eta_1 \to \cdots \to \eta_l$ in $\rmod A$
such that $\eta_l$ is a direct sum of almost split sequences.
By remembering the steps taken during this construction, we can reconstruct a perfect exact sequence in $\rmod B$ 
corresponding to the original perfect exact sequence $\eta_0$.
This is done by using almost split sequences during each step of the construction.
By the proposition above, such almost split sequences are
preserved by a stable equivalence between algebras without nodes.
In order for this chain to be finite, we need to assume some condition on the morphisms 
in the perfect exact sequence. This condition is satisfied for all morphisms if $A$ 
is of finite representation type. 

In case that the starting term of the perfect exact sequence is not indecomposable, we need the following remark.
\begin{Remark} \label{Rem:SumOfAss}
Suppose given $X \in \rmod A$ without injective direct summands. 
Let $X = \bigoplus_i X_i$ be the decomposition of $X$ into indecomposable direct summands.
				
We denote the direct sum of all almost split sequences starting in the $X_i$ as follows.
\[ 
0 \rightarrow X \xrightarrow{s} E_X \xrightarrow{t} T(X) \rightarrow 0 
\]
In particular, $E_X = \bigoplus_i E_{X_i}$ and $T(X) = \bigoplus_i	 \tau^{-1}(X_i)$.

Recall that almost split sequences with $X$ not projective are perfect exact; cf.\ \cref{Ex:PerfectSeq}.
We use \cref{Lem:PerfectSeq-Via-PoPb} to combine a perfect exact sequence with such almost split sequences.
\end{Remark}

\begin{Lemma}\label{Lem:ConstructPerfSeqViaARSeq}
Suppose given a perfect exact sequence $\eta : 0 \rightarrow X \xrightarrow{f} Y \xrightarrow{g} Z \rightarrow 0$ in $\rmod A$ 
without split summands. 

Recall from \cref{Rem:SumOfAss} the sequence $0 \rightarrow X \xrightarrow{s} E_X \xrightarrow{t} T(X) \rightarrow 0$.
Then there exists a perfect exact sequence
\[ 
\tilde{\eta} : 0 \rightarrow E_X \xrightarrow{\matez{v}{t}} Y \oplus T(X) \xrightarrow{\matze{g}{w}} Z \rightarrow 0
\]
in $\rmod A$ such that $f = s\, v$. We often denote this sequence by $\tilde{\eta}$.
\end{Lemma}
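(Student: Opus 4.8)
The plan is to obtain $\tilde\eta$ by combining $\eta$ with the sequence $\xi\colon 0 \to X \xrightarrow{s} E_X \xrightarrow{t} T(X) \to 0$ of \cref{Rem:SumOfAss} via \cref{Lem:PerfectSeq-Via-PoPb}(1), taking $\eta$ as its upper row and $\xi$ as the second short exact sequence appearing there. Provided $f$ factors as $f = sv$ for some $v\colon E_X \to Y$, that lemma produces a short exact sequence $0 \to E_X \xrightarrow{\matez{v}{t}} Y \oplus T(X) \xrightarrow{\matze{g}{w}} Z \to 0$ --- exactly the shape of $\tilde\eta$, and with $f = sv$ --- and its last clause makes this sequence perfect exact as soon as $\eta$ and $\xi$ are. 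So it remains to check (a) that $\xi$ is well defined and perfect exact, and (b) that $f$ factors through $s$.

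For (a), I first note that the hypotheses on $\eta$ force $X$ to have neither an injective nor a projective direct summand. If $X_i \subseteq X$ were injective, then the monomorphism $X_i \hookrightarrow X \xrightarrow{f} Y$ would split, yielding a split summand of $\eta$. If $P \subseteq X$ were projective, then perfect exactness of $\eta$ makes $f^\ast$ surjective, so the component $Y^\ast \to P^\ast$ of $f^\ast$ is a split epimorphism onto the projective left module $P^\ast$; dualising once more and using that biduality is an isomorphism on projectives (immediate from the equivalence $(-)^\ast\colon \proj A \to A\text{-proj}$), the composite $P \hookrightarrow X \xrightarrow{f} Y$ becomes a split monomorphism, again a split summand of $\eta$. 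Writing $X = \bigoplus_i X_i$ into indecomposables, the absence of injective summands lets us form $\xi$ as the direct sum of the almost split sequences $0 \to X_i \xrightarrow{s_i} E_{X_i} \xrightarrow{t_i} \tau^{-1}(X_i) \to 0$; since no $X_i$ is projective, each of these is perfect exact by \cref{Ex:PerfectSeq}, hence so is $\xi$, because $(-)^\ast$ commutes with finite direct sums.

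For (b), each $f|_{X_i}\colon X_i \to Y$ fails to be a split monomorphism --- otherwise $\eta$ would have a split summand --- so by the defining property of the minimal left almost split morphism $s_i$ it factors as $f|_{X_i} = s_i v_i$ for some $v_i\colon E_{X_i} \to Y$; assembling the $v_i$ into $v\colon E_X \to Y$ gives $sv = f$. Applying \cref{Lem:PerfectSeq-Via-PoPb}(1) to $\eta$, the perfect exact sequence $\xi$ and the factorisation $f = sv$ then yields the desired perfect exact sequence $\tilde\eta\colon 0 \to E_X \xrightarrow{\matez{v}{t}} Y \oplus T(X) \xrightarrow{\matze{g}{w}} Z \to 0$ with $f = sv$. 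The only step here that is not formal bookkeeping is the part of (a) showing that $X$ has no projective direct summand --- this is precisely what makes $\xi$ perfect exact, and hence the whole construction valid.
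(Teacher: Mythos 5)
Your argument follows the paper's proof almost exactly: establish that $X$ has neither injective nor projective direct summands, form $\xi$ as a direct sum of almost split sequences (perfect exact by \cref{Ex:PerfectSeq}), factor $f$ through $s$ using the left almost split property, and apply \cref{Lem:PerfectSeq-Via-PoPb}(1). You supply more detail than the paper for the claim that a projective summand of $X$ would force a split summand of $\eta$ (the paper merely asserts this), and your argument there is correct.
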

\begin{proof}
By assumption, $X$ has neither injective nor projective direct summands, otherwise the given exact 
sequence would have a split direct summand.
Hence, there exists a perfect exact sequence that is the direct sum of almost split sequences 
starting in direct summands of $X$; cf.\ \cref{Rem:SumOfAss}.
\[ 
0 \rightarrow X \xrightarrow{s} E_X \xrightarrow{t} T(X) \rightarrow 0 
\]
Moreover, $f$ is not a split morphism. 
Therefore, $f$ factors through $s$ via a morphism $E_X \xrightarrow{v} Y$, that is $f = s \, v$. 

By \cref{Lem:PerfectSeq-Via-PoPb}.(1), we obtain that 
$0 \rightarrow E_X \xrightarrow{\matez{v}{t}} Y \oplus T(X) \xrightarrow{\matze{g}{w}} Z \rightarrow 0$
is a perfect exact sequence with some morphism $w : T(X) \to Z$.
\end{proof}

\begin{Construction} \label{Constr:EtaTildeFromEta}
Suppose given a perfect exact sequence $\eta_0 : 0 \rightarrow X_0 \xrightarrow{f_0} Y_0 \xrightarrow{g_0} Z_0 \rightarrow 0$
in $\rmod A$ without split summands.

We construct perfect exact sequences $\eta_k$ recursively. 
Let $k \geqslant 0$ such that $\eta_{k}$ has no split summands.
Recall the perfect exact sequence $\tilde{\eta}_k$ from \cref{Lem:ConstructPerfSeqViaARSeq} and the morphism of 
short exact sequences from \cref{Lem:PerfectSeq-Via-PoPb}.
We define $\eta_{k+1}$ to be the sequence obtained from $\tilde{\eta}_{k}$ by removing all split summands. 
Then $\eta_{k+1}$ is a perfect exact sequence without split summands and a direct summand of $\tilde{\eta}_{k}$.
In particular, we have the projection of the short exact sequence onto its direct summand.
In general, the middle morphism is not the natural projection.
\[
\begin{tikzcd}
\eta_k         \ar[d] & 0 \ar[r] & X_k     \ar[d, "s_k"]             \ar[r, "f_k"]                  & Y_k               \ar[d, "\tikzmatez{1}{0}"] \ar[r, "g_k"]                  & Z_k \ar[d, equal]                      \ar[r] & 0 \\
\tilde{\eta}_k \ar[d] & 0 \ar[r] & E_{X_k} \ar[d, twoheadrightarrow] \ar[r, "\tikzmatez{v_k}{t_k}"] & Y_k \oplus T(X_k) \ar[d, twoheadrightarrow]  \ar[r, "\tikzmatze{g_k}{w_k}"] & Z_k \ar[d, twoheadrightarrow, "\pi_k"] \ar[r] & 0 \\
\eta_{k+1}            & 0 \ar[r] & X_{k+1}                           \ar[r, "f_{k+1}"]              & Y_{k+1}                                      \ar[r, "g_{k+1}"]              & Z_{k+1}                                \ar[r] & 0  
\end{tikzcd}
\]
Note that for all $k$, the module $Z_{k+1}$ is a direct summand of $Z_k$ and consequently also of $Z_0$.
Furthermore, if $\eta_k$ is an almost split sequence, $\tilde\eta_k$ will be a split sequence and therefore $\eta_{k+1} = 0$.
In fact, in this case $v_k$ is an isomorphism.
\end{Construction}

In general, this construction will not terminate, as the following example shows.
\begin{Example} \label{Ex:Kronecker}
Let $k$ be an algebraically closed field.
We consider the Kronecker algebra $A$ given by the following quiver. 
\[
\begin{tikzcd}
1 \ar[r, shift left] \ar[r, shift right] & 2
\end{tikzcd} 
\]
Since $A$ is hereditary, every short exact sequence starting in a non-projective module is perfect exact;
cf.\ \cref{Ex:PerfectSeq}.
We denote the indecomposable $A$-modules by their dimension vector.
Let $n \in \ZZ_{\geqslant 0}$. The preprojective component of the Auslander-Reiten quiver of $A$ consists of the modules 
with dimension vector $\tikzmatze{n}{n+1}$. The preinjective component consists of the modules 
with dimension vector $\tikzmatze{n+1}{n}$. Finally, a module with dimension vector $\tikzmatze{n}{n}$ is in the regular
component. In particular, the indecomposable projective modules are given by $P_1 = \tikzmatze{0}{1}$ and
$P_2 = \tikzmatze{1}{2}$.
Recall that the almost split sequence starting in a module in the preprojective component is given by the following. 
\[
0 \to \tikzmatze{n}{n+1} \xrightarrow{s} \tikzmatze{n+1}{n+2} \oplus \tikzmatze{n+1}{n+2} \xrightarrow{t} \tikzmatze{n+2}{n+3} \to 0
\]
We consider the following perfect exact sequence in $\rmod A$.
Note that the starting and middle term of this sequence are in different components of the Auslander-Reiten sequence.
\[
\eta_0 :\quad 0 \to \tikzmatze{2}{3} \xrightarrow{f_0} \tikzmatze{3}{3} \xrightarrow{g_0} \tikzmatze{1}{0} \to 0 
\]
We show by induction, that the perfect exact sequence $\eta_n$ of \cref{Constr:EtaTildeFromEta} is given as follows.
\[
\eta_n :\quad 0 \to \tikzmatze{n+2}{n+3}^{\oplus n+1} \to \tikzmatze{3}{3} \oplus \tikzmatze{n+3}{n+4}^{\oplus n} \to \tikzmatze{1}{0} \to 0 
\]
In fact, we have the following construction step from $n$ to $n+1$ with notation as in \cref{Constr:EtaTildeFromEta}.
\[
\begin{tikzcd}[column sep = .45cm]
\eta_n :       & 0 \ar[r] & \tikzmatze{n+2}{n+3}^{\oplus n+1} \ar[rr, "f_n"] \ar[d, shift right=2.2ex, "s_n"]                               & & \tikzmatze{3}{3} \oplus \tikzmatze{n+3}{n+4}^{\oplus n} \ar[rr] \ar[d, shift right=3ex, hookrightarrow, "\tikzmatez{1}{0}"]                                       & & \tikzmatze{1}{0} \ar[r] \ar[d, equal] & 0 \\
\tilde\eta_n : & 0 \ar[r] & \tikzmatze{n+3}{n+4}^{\oplus 2n+2} \ar[rr, "\tikzmatez{v_n}{t_n}"] \ar[d, shift right=2.2ex, twoheadrightarrow] & & \left(\tikzmatze{3}{3} \oplus \tikzmatze{n+3}{n+4}^{\oplus n}\right) \oplus \tikzmatze{n+4}{n+5}^{\oplus n+1}  \ar[rr] \ar[d, shift right=3ex, twoheadrightarrow] & & \tikzmatze{1}{0} \ar[r] \ar[d, equal] & 0 \\
\eta_{n+1} :   & 0 \ar[r] & \tikzmatze{n+3}{n+4}^{\oplus n+2} \ar[rr, "f_{n+1}"]                                                            & & \tikzmatze{3}{3} \oplus \tikzmatze{n+4}{n+5}^{\oplus n+1} \ar[rr]                                                                                                 & & \tikzmatze{1}{0} \ar[r]               & 0 \\ 
\end{tikzcd}
\]
For all $n \geqslant 0$, we have that $f_{n+1}$ is not irreducible. Thus, the perfect exact sequence $\eta_n$ cannot be 
almost split for any $n \geqslant 0$.

Now, consider the following perfect exact sequence.
\[
\eta_0 : \quad 0 \to \tikzmatze{2}{3} \xrightarrow{f_0} \tikzmatze{3}{4} \xrightarrow{g_0} \tikzmatze{1}{1} \to 0
\]
This time, $f_0$ is an irreducible morphism in the preprojective component. 
However, the ending term is still in a different component of the Auslander-Reiten quiver. 
Similarly to the induction above, we can show that $\eta_n$ of \cref{Constr:EtaTildeFromEta} is given as follows.
\[
\eta_n :\quad 0 \to \tikzmatze{n+2}{n+3} \to \tikzmatze{n+3}{n+4} \to \tikzmatze{1}{1} \to 0 
\]
Again, $\eta_n$ is not an almost split sequence for any $n \geqslant 0$.
\end{Example}

As seen above, we need a condition on both $f_0$ and $g_0$ for \cref{Constr:EtaTildeFromEta} 
to terminate with an almost split sequence.
This condition will be given via the radical of $\rmod A$.
\begin{Definition} \label{Def:Radial-of-Cat}
The \textit{radical} $\rad(\rmod A)$ of $\rmod A$ has the same objects as $\rmod A$ with morphisms 
$f \in \rad_A(X,Y)$ if $gfh$ is not an isomorphism for all $g \in \Hom_A(Z,X)$, $h \in \Hom_A(Y,Z)$ and 
$Z \in \rmod A$ indecomposable.
Recursively, we can define $\rad^0(X,Y) := \Hom_A(X,Y)$ and for $n \in  \ZZ_{\geqslant 1}$
$\rad_A^n(X,Y) := \{ fg : f \in \rad_A(X,Z) \text{ and } g \in \rad^{n-1}_A(Z,Y) \text{ for a } Z \in \rmod A \}$.

Let $f : X \to Y$ in $\rmod A$.
Following \cite{ChaioLiu_Radical}, we say that $f$ has \textit{depth} $n \geqslant 0$, if $f \in \rad_A^n(X,Y)$, 
but $f \not\in \rad^{n+1}(X,Y)$. In case that $f \in \rad_A^n(X,Y)$ for all $n \geqslant 0$, we set $\depth f = \infty$.
\end{Definition}
For more details on the radical see \cite[Section V.7]{AuslanderReitenSmalo}.
We list some properties that are important for our purposes.
\begin{Remark} \label{Rem:Radiacl_Properties}
Let $X,Y \in \rmod A$.
\begin{itemize}
\item[(1)] The $n$-th radical $\rad^n_A(X,Y)$ is a two-sided ideal for $n \in \ZZ_{\geqslant 0}$.   
           Furthermore, we have 
           \[
           \rad^n_A(X,Y) \subseteq \rad^{n-1}_A(X,Y) \subseteq \cdots \subseteq \rad^2_A(X,Y) \subseteq \rad_A(X,Y).
           \]
\item[(2)] Suppose that $f : X \to Y$ has depth zero. Then there exists a $Z \in \rmod A$ indecomposable and morphisms
           $g : Z \to X$, $h : Y \to Z$ such that $gfh$ is an isomorphism. 
           In particular, $g$ and $h$ split so that $Z$ is a common direct summand of $X$ and $Y$.
            
\item[(3)] Let $f : X \to Y$ be a morphism with $\depth f = n$.
           For all morphisms $g : X' \to X$ and $h : Y \to Y'$ in $\rmod A$ we have $\depth(gfh) \geqslant n$ since
           the $n$-th radical is an ideal.

\item[(4)] Suppose that $X$ or $Y$ is indecomposable. 
           An irreducible morphism $f : X \to Y$ has depth $1$.
           If both $X$ and $Y$ are indecomposable, $f : X \to Y$ is irreducible if and only if $\depth f = 1$.
           Furthermore, a direct sum of irreducible morphisms still has depth $1$. 
          
\item[(5)] If $A$ is of finite representation type, every morphism in $\rmod A$ has finite depth; 
           cf.\ \cite[Theorem 7.7]{AuslanderReitenSmalo}.
\end{itemize}
\end{Remark}

We will assume that $f_0\,p$ has finite depth for every projection $p$ onto 
an indecomposable direct summand.
The next result shows that this property gets passed on to all $f_k$ for $k \geqslant 0$.
\begin{Lemma} \label{Lem:f_k_FiniteType}
Suppose given a perfect exact sequence 
$\smash{\eta_0 : 0 \rightarrow X_0 \xrightarrow{f_0} Y_0 \xrightarrow{g_0} Z_0 \rightarrow 0}$ in $\rmod A$ 
without split summands. We use the notation of \cref{Constr:EtaTildeFromEta}.

Suppose that $f_0\,p_0$ has finite depth for every projection $p_0 : Y_0 \to Y_0'$ onto an indecomposable direct summand
$Y_0'$ of $Y_0$.

Then $f_k\,p_k$ has finite depth for all $k \geqslant 0$ and every projection $p_k : Y_k \to Y_k'$ onto an indecomposable 
direct summand $Y_k'$ of $Y_k$.
\end{Lemma}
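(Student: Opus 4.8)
The plan is to track how the morphism $f_{k+1}$ relates to $f_k$ through the construction step, and to show that the depth can only drop by a controlled amount. Recall from \cref{Constr:EtaTildeFromEta} that $\eta_{k+1}$ is obtained from $\tilde\eta_k$ by deleting split summands, so the middle term $Y_{k+1}$ is a direct summand of $Y_k \oplus T(X_k)$. Write the inclusion $\iota : Y_{k+1} \hookrightarrow Y_k \oplus T(X_k)$ and projection $\rho : Y_k \oplus T(X_k) \twoheadrightarrow Y_{k+1}$ with $\iota\rho = \id$. From the commuting ladder in \cref{Constr:EtaTildeFromEta}, the map $X_{k+1} \xrightarrow{f_{k+1}} Y_{k+1}$ is, up to the surjection $E_{X_k} \twoheadrightarrow X_{k+1}$, the composite of $\matez{v_k}{t_k} : E_{X_k} \to Y_k \oplus T(X_k)$ with $\rho$; more precisely the square says $E_{X_k} \to X_{k+1} \xrightarrow{f_{k+1}} Y_{k+1}$ equals $E_{X_k} \xrightarrow{\matez{v_k}{t_k}} Y_k \oplus T(X_k) \xrightarrow{\rho} Y_{k+1}$. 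Post-composing the latter with $\iota$ recovers $\matez{v_k}{t_k}$ followed by the idempotent $\iota\rho$.

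First I would reduce to a statement about $v_k$. Since $t_k$ is (a direct sum of) irreducible maps out of the $E_{X_k}$, it has depth $1$ by \cref{Rem:Radiacl_Properties}.(4), hence so does any of its components; the interesting part is $v_k : E_{X_k} \to Y_k$. The key relation is $f_k = s_k v_k$, where $s_k : X_k \to E_{X_k}$ is the direct sum of almost split monomorphisms. Now fix a projection $p_{k+1} : Y_{k+1} \to Y_{k+1}'$ onto an indecomposable summand. Composing $f_{k+1} p_{k+1}$ with suitable split maps and using that $\iota\rho$ is idempotent, $f_{k+1}p_{k+1}$ agrees (modulo the surjection $E_{X_k}\twoheadrightarrow X_{k+1}$, which only raises depth by \cref{Rem:Radiacl_Properties}.(3)) with a composite of $\matez{v_k}{t_k}$, the idempotent $\iota\rho$, and a projection onto an indecomposable summand of $Y_{k+1}'$. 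Writing $\iota\rho$ as a block matrix over the indecomposable summands of $Y_k \oplus T(X_k)$, and expanding, I obtain that $f_{k+1}p_{k+1}$ is a finite sum of composites each of which factors either through a component of $v_k$ followed by a projection $q_k$ onto an indecomposable summand of $Y_k$, or through a component of $t_k$ (depth $1$). So it suffices to bound the depth of $v_k q_k$ for each such $q_k$.

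For that, I would invoke the almost split property: $s_k$ restricted to the summand $X_k^{(j)}$ of $X_k$ is an almost split monomorphism, hence is left almost split, so $v_k q_k$ — if it is not split — factors through $s_k^{(j)}$ only in the trivial sense; what I actually use is the reverse. Since $f_k = s_k v_k$ and $s_k$ is a direct sum of irreducible maps (depth $1$ each), we have $\depth(f_k q_k') \ge \depth(v_k q_k')$ is the wrong direction; instead I argue: $v_k q_k$ can be recovered from $f_k$ up to a retraction. Indeed, because $s_k$ is left almost split and $X_k$ has no injective summands, for each indecomposable summand there is a map $r^{(j)} : E_{X_k}^{(j)} \to X_k^{(j)}$ (coming from irreducibility of $s_k^{(j)}$ not being split — here one uses that $s_k^{(j)}$ is a minimal left almost split map, but need not split) — this is the delicate point. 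The cleaner route, which I would actually take, is: the composite $s_k v_k q_k = f_k q_k$, and since $s_k q_k''$ (for the relevant projections) realizes $v_k q_k$ as ``$f_k$ composed with things'' only after choosing a splitting of $s_k$, which does not exist. So instead I use the pushout square from \cref{Lem:PerfectSeq-Via-PoPb}.(1) directly: the map $\matez{v_k}{t_k}$ is the pushout of $s_k$ along $f_k$, and pushouts of radical maps along radical maps stay in the same radical power — more concretely, $v_k$ differs from a lift of $f_k$ through $s_k$, and any two such lifts differ by a map factoring through $\Cok s_k = T(X_k)$. Thus $v_k q_k \equiv (\text{lift of } f_k p_k) \pmod{\rad^{\ge 1}}$ composed appropriately, giving $\depth(v_k q_k) \ge \depth(f_k p_k) - c$ for a constant $c$ depending only on the (finite) number of irreducible maps composing $s_k$.

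The main obstacle is exactly this last step: controlling the depth of the lift $v_k$ of $f_k$ through the almost split monomorphism $s_k$, i.e.\ showing $\depth(v_k q_k) \geq \depth(f_k p_k) - 1$ for each projection $p_k$ onto an indecomposable summand and the correspondingly induced $q_k$. I expect this to follow from: (a) $s_k$ is a split monomorphism onto its image in no indecomposable summand (minimality), so pre-composing with $s_k$ lowers depth by at most $1$ on indecomposable summands, combined with (b) the fact that $f_k p_k = s_k (v_k \cdot(\text{block of }q_k))$ plus a correction through $t_k$-components of depth $1$; since $\rad^{n}$ is an ideal and $s_k$ has depth $1$, $\depth(f_k p_k) \le 1 + \min_j \depth(v_k q_k^{(j)})$, which rearranges to the desired bound. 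Then finiteness of all $\depth(f_k p_k)$ follows by induction on $k$ from finiteness at stage $0$, since each step decreases the depth by at most a bounded amount and the construction's summand-deletion only increases depth (\cref{Rem:Radiacl_Properties}.(3)). This completes the argument.
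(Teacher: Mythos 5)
There is a genuine gap, and it stems from misreading which direction the depth inequality needs to go. You dismiss the inequality $\depth(f_k\,q_k) \geqslant \depth(v_k\,q_k)$ (which follows from $f_k = s_k\,v_k$ and \cref{Rem:Radiacl_Properties}.(3)) as ``the wrong direction'' and then spend the rest of the argument trying to produce a \emph{lower} bound on $\depth(v_k\,q_k)$ of the form $\depth(v_k\,q_k) \geqslant \depth(f_k\,p_k) - c$ via a pushout argument that you yourself flag as the unresolved ``main obstacle.'' But the inequality you discarded is exactly what is needed: you want to show $\depth(f_{k+1}\,p_{k+1})$ is \emph{finite}, i.e.\ bounded above, and pre-composition can only increase depth. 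Concretely, using the commutativity $s_k\,\matez{v_k}{t_k} = f_k\,\matez{1}{0}$ and $\matez{v_k}{t_k}\matze{\varphi_k}{\psi_k} = \rho_k\, f_{k+1}$ (notation of \cref{Constr:EtaTildeFromEta}), one has $s_k\,\rho_k\, f_{k+1}\, p_{k+1} = f_k\,\varphi_k\,p_{k+1}$, where $\varphi_k\,p_{k+1}$ is a split epimorphism onto the indecomposable summand $Y_{k+1}'$ of $Y_k$. Then \cref{Rem:Radiacl_Properties}.(3) gives $\depth(f_{k+1}\,p_{k+1}) \leqslant \depth(s_k\,\rho_k\, f_{k+1}\, p_{k+1}) = \depth(f_k\,\varphi_k\,p_{k+1}) < \infty$ by the induction hypothesis. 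The case where $Y_{k+1}'$ is a summand of $T(X_k)$ is handled as you indicate (depth $1$ via $t_k$), so that part of your outline is fine.

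In short: your case split and your identification of $v_k$ and $t_k$ as the objects to control are correct and match the paper's structure, but the lower-bound argument you attempt (via lifts through the almost split monomorphism, ``pushouts of radical maps,'' and an unspecified constant $c$) is both unfinished and unnecessary. The ideal property of $\rad^n$ already yields the needed upper bound directly, without ever estimating $\depth(v_k\,q_k)$ from below.
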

\begin{proof} 
We proceed by induction on $k$ and show that $\depth(f_{k+1}\,p_{k+1})$ is finite for $k \geqslant 0$.
We use the following expanded notation from \cref{Constr:EtaTildeFromEta}.
\[
\begin{tikzcd}
X_k     \ar[d, "s_k"]                       \ar[r, "f_k"]                  & Y_k               \ar[d, "\tikzmatez{1}{0}"]         \\
E_{X_k} \ar[d, twoheadrightarrow, "\rho_k"] \ar[r, "\tikzmatez{v_k}{t_k}"] & Y_k \oplus T(X_k) \ar[d, twoheadrightarrow, "\tikzmatze{\varphi_k}{\psi_k}"] \\
X_{k+1}                                     \ar[r, "f_{k+1}"]              & Y_{k+1}                                      
\end{tikzcd}
\]
Recall that $0 \to X_k \xrightarrow{s_k} E_{X_k} \xrightarrow{t_k} T(X_k) \to 0$ is a direct sum of almost split sequences
for all $k \geqslant 0$.
Since $\matze{\varphi_k}{\psi_k}$ is split, $Y_{k+1}'$ is either an indecomposable direct summand of $Y_k$ 
or an indecomposable direct summand of $T(X_k)$.

Suppose that $Y_{k+1}'$ is a direct summand of $T(X_k)$.
In this case, we have $\rho_k\, f_{k+1}\, p_{k+1} = t_k \psi_k p_{k+1}$.
Using that $\psi_k p_{k+1}$ is a split epimorphism, $t_k \psi_k p_{k+1}$ is irreducible 
so that $\depth(t_k \psi_k p_{k+1}) = 1$.
This implies $\depth(f_{k+1}\, p_{k+1}) \leqslant \depth(\rho_k\, f_{k+1}\, p_{k+1}) = \depth(t_k \psi_k p_{k+1}) = 1$.

Suppose that $Y_{k+1}'$ is a direct summand of $Y_k$.
In this case, we have $s_k\,\rho_k\, f_{k+1}\, p_{k+1} = f_k\, \varphi_k\, p_{k+1}$.
Using that $\varphi_k p_{k+1}$ is a split epimorphism, 
we know that $\depth(f_k\, \varphi_k\, p_{k+1}) < \infty$ by induction hypothesis.
This implies 
$\depth(f_{k+1}\,p_{k+1}) \leqslant \depth(s_k\rho_k\, f_{k+1}\, p_{k+1}) = \depth(f_k\, \varphi_k\, p_{k+1}) < \infty$.
\end{proof}

We aim to show that \cref{Constr:EtaTildeFromEta} ends with an almost split sequence
under the assumption that $f_0\,p$ and $g_0\,\pi$ have finite depth for every projection $p$ and $\pi$ onto 
an indecomposable direct summand.
We use the assumption on $f_0$ to show that the middle morphism in the construction will eventually 
be an element of the radical. Together with the assumption on $g_0$ this guarantees that we arrive at a split sequence.
Finally, we will use that $\eta_l$ is an almost split sequence if and only if $\tilde\eta_{l}$ is a split sequence.
\begin{Lemma} \label{Lem:ReducePerfSeqToARSeq}
Suppose given a perfect exact sequence 
$\eta_0 : 0 \rightarrow X_0 \xrightarrow{f_0} Y_0 \xrightarrow{g_0} Z_0 \rightarrow 0$ in $\rmod A$ 
without split summands. 
Suppose that $f_0\,p$ and $g_0\,\pi$ have finite depth for every projection $p$ onto an indecomposable direct summand of $Y_0$
and every projection $\pi$ onto an indecomposable direct summand of $Z_0$.

Then there exists an $l \in \ZZ_{\geqslant 0}$ such that $\eta_l$ in \cref{Constr:EtaTildeFromEta} 
is a direct sum of almost split sequences.
\end{Lemma}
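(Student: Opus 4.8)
The plan is to track two numerical invariants along the chain $\eta_0 \to \eta_1 \to \cdots$ from \cref{Constr:EtaTildeFromEta} and argue that the construction must stabilise, at which point \cref{Constr:EtaTildeFromEta} itself records that $\eta_l$ is a direct sum of almost split sequences. The first invariant comes from the ending terms: as remarked at the end of \cref{Constr:EtaTildeFromEta}, each $Z_{k+1}$ is a direct summand of $Z_k$, so the number of indecomposable summands of $Z_k$ is weakly decreasing in $k$. Hence after finitely many steps this number is constant, and from that point on the projection $\pi_k : Z_k \to Z_{k+1}$ is an isomorphism, so we may assume $Z_k = Z_0$ (up to isomorphism) and $\pi_k = \id$ for all $k$. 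The second invariant is the depth of $g_k \pi$ for projections $\pi$ onto indecomposable summands of $Z_k$: I would prove a companion to \cref{Lem:f_k_FiniteType} showing that once the $Z_k$ have stabilised, $\depth(g_{k+1}\,\pi)$ is finite whenever $\depth(g_k\,\pi')$ is finite for the relevant summands; the relation $\matze{g_k}{w_k}$ restricted along the split epimorphism $Y_k \oplus T(X_k) \twoheadrightarrow Y_{k+1}$ and the fact that $\pi_k$ is now an isomorphism give $g_{k+1}\,\pi = \matze{g_k}{w_k}\,(\text{split mono})\,\pi$, whose depth is bounded below (not above) — so here I instead need to bound depth from \emph{above}, which requires using that $g_{k+1}$ is a \emph{quotient} of $\matze{g_k}{w_k}$ and that $w_k$ is built from the connecting maps of almost split sequences.

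The core mechanism is this: I claim that if $\eta_k$ is not already a direct sum of almost split sequences, then passing from $\eta_k$ to $\eta_{k+1}$ strictly raises a suitable depth. More precisely, in \cref{Constr:EtaTildeFromEta} the map $v_k : E_{X_k} \to Y_k$ satisfies $f_k = s_k v_k$, and $v_k$ is an isomorphism exactly when $\eta_k$ is (a direct sum of) almost split sequences and $\tilde\eta_k$ splits. If $v_k$ is \emph{not} an isomorphism, then since $s_k$ is a direct sum of irreducible monomorphisms, the factorisation $f_k = s_k v_k$ together with the universal (almost split) property forces each relevant component of $v_k$ into $\rad(\rmod A)$, so that $\depth(f_{k+1}\,p_{k+1}) \geqslant \depth(f_k\,p_k') + 1$ along the summands that are not "used up" into $T(X_k)$ — the summands of $Y_{k+1}$ coming from $T(X_k)$ contribute irreducible (depth $1$) pieces and get pruned or stabilise, while the summands inherited from $Y_k$ see their depth strictly increase. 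Combined with \cref{Lem:f_k_FiniteType}, which guarantees these depths are all finite, a strictly increasing finite quantity cannot increase forever; so after finitely many steps no component of $v_k$ can be forced into the radical, which means $v_k$ must be an isomorphism, i.e.\ $\eta_k$ is a direct sum of almost split sequences. The role of the hypothesis on $g_0$ is to control the $Z$-side in exactly the same way, ensuring the pruning of split summands in \cref{Constr:EtaTildeFromEta} does not introduce new low-depth components on the cokernel side that would let the process continue indefinitely.

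Concretely I would organise the write-up as: (i) reduce to the case $Z_k \cong Z_0$, $\pi_k = \id$ for all $k \geqslant k_0$, using that summand-count of $Z_k$ is weakly decreasing; (ii) state and prove the depth-monotonicity step, $\depth(f_{k+1}\,p_{k+1}) > \depth(f_k\,p_k)$ along inherited summands whenever $v_k$ is not an isomorphism, via the almost split factorisation and \cref{Rem:Radiacl_Properties}(1),(4); (iii) invoke \cref{Lem:f_k_FiniteType} to see these depths are finite, plus the analogous finiteness for $g_k$, so the monotone increase must halt; (iv) conclude that at the halting index $l$ the map $v_l$ is an isomorphism, hence $\tilde\eta_l$ splits and $\eta_l$ is a direct sum of almost split sequences, exactly as recorded in \cref{Constr:EtaTildeFromEta}. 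The main obstacle will be step (ii): pinning down precisely which components of $v_k$ are forced into the radical and getting a genuine \emph{strict} increase in depth rather than just $\geqslant$, since the depth could a priori stall if some summand of $Y_k$ repeatedly splits off new almost split summands of depth $1$. Handling this requires carefully using that $\eta_k$ has no split summands (so $v_k$ has no isomorphism components landing on a summand of $Y_k$) together with the hypothesis on $g_0$ to rule out the degenerate stalling behaviour illustrated by \cref{Ex:Kronecker} when only one of $f_0$, $g_0$ is controlled.
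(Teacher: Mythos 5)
Your step (ii) --- the claim that $\depth(f_{k+1}\,p_{k+1}) > \depth(f_k\,p_k)$ along inherited summands whenever $v_k$ is not an isomorphism --- is false, and the paper's own \cref{Ex:Kronecker} (second half) witnesses this: there $f_n \colon \tikzmatze{n+2}{n+3} \to \tikzmatze{n+3}{n+4}$ is irreducible, hence of depth exactly $1$, for every $n \geqslant 0$, yet the construction does not terminate. The factorisation $f_k = s_k v_k$ only uses that $f_k$ is not split; it does not force $v_k$ into the radical, and $v_k$ can well be irreducible without being an isomorphism. \cref{Lem:f_k_FiniteType} supplies a uniform \emph{upper} bound on $\depth(f_k p)$, not monotone growth, and no upper bound alone can yield termination --- the hypothesis on $g_0$ must intervene in the heart of the argument, not merely as a safeguard on the side. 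Your step (i) is also misdirected: the point is not that the $Z_k$ stabilise but that they become zero; if the construction ran forever with $\pi_k$ eventually all isomorphisms you would be in exactly the Kronecker situation, not a terminating one.

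The paper's proof instead runs a two-level contradiction using both depth hypotheses at distinct places. Assuming $g_0\,\pi_0 \cdots \pi_k \neq 0$ for all $k$, the finiteness of $\depth(g_0\,\pi)$ rules out that for every $N$ the composites $\varphi_N \cdots \varphi_k$ lie in $\rad(Y_N, Y_{k+1})$ for arbitrarily large $k$ (otherwise $g_0\pi_0\cdots\pi_k$ would be nonzero yet lie in arbitrarily deep radical powers, and since $Z_0$ has finitely many indecomposable summands some fixed projection $g_0\pi$ would have infinite depth). Hence there is an $N$ with $\varphi_N \cdots \varphi_k \notin \rad$ for all $k \geqslant N$. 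Taking $m$ to be the bound on $\depth(f_N p)$ from \cref{Lem:f_k_FiniteType}, the composite $\varphi := \varphi_N \cdots \varphi_{N+m-1}$ is non-radical, so admits $i,p$ with $i\varphi p$ an isomorphism on an indecomposable $M$; then $\varphi p$ is split and $\depth(f_N\varphi p) < m$, while the commutativity $s_k\rho_k f_{k+1} = f_k\varphi_k$ shows $f_N\varphi p$ factors through the $m$ irreducible maps $s_N,\dots,s_{N+m-1}$, giving $\depth(f_N\varphi p) \geqslant m$ --- a contradiction. So $g_0\pi_0\cdots\pi_l = 0$ for some $l$, hence $Z_{l+1} = 0$ and $\tilde\eta_l$ splits; your step (iv) deducing from this that $\eta_l$ is a direct sum of almost split sequences matches the paper's finishing argument.
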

\begin{proof}
We first prove that there exists an $l \in \ZZ_{\geqslant 0}$ such that $\tilde{\eta}_{l}$ is a split sequence.
\cref{Constr:EtaTildeFromEta} yields the following sequence of morphisms of short exact sequences.
We aim to show that there is an $l \geqslant 0$ such that $g_0\, \pi_0 \cdots \pi_l = 0$.
\[
\begin{tikzcd}
\eta_0       \ar[d] & 0 \ar[r] & X_0 \ar[d, "s_0"] \ar[r, "f_0"]                                  & Y_0               \ar[d, "\tikzmatez{1}{0}"] \ar[r, "g_0"]                  & Z_0 \ar[d, equal]             \ar[r] & 0 \\
\tilde\eta_0 \ar[d] & 0 \ar[r] & E_{X_0} \ar[d, twoheadrightarrow] \ar[r, "\tikzmatez{v_0}{t_0}"] & Y_0 \oplus T(X_0) \ar[d, twoheadrightarrow]  \ar[r, "\tikzmatze{g_0}{w_0}"] & Z_0 \ar[d, twoheadrightarrow, "\pi_0"] \ar[r] & 0 \\
\eta_1       \ar[d] & 0 \ar[r] & X_1 \ar[d, "s_1"] \ar[r, "f_1"]                                  & Y_1               \ar[d, "\tikzmatez{1}{0}"] \ar[r, "g_1"]                  & Z_1 \ar[d, equal]             \ar[r] & 0 \\
\tilde\eta_1 \ar[d] & 0 \ar[r] & E_{X_1} \ar[d, twoheadrightarrow] \ar[r, "\tikzmatez{v_1}{t_1}"] & Y_1 \oplus T(X_1) \ar[d, twoheadrightarrow]  \ar[r, "\tikzmatze{g_1}{w_1}"] & Z_1 \ar[d, twoheadrightarrow, "\pi_1"] \ar[r] & 0 \\
\eta_2       \ar[d] & 0 \ar[r] & X_2 \ar[d] \ar[r, "f_2"]                                         & Y_2               \ar[d]                     \ar[r, "g_2"]                  & Z_2 \ar[d, equal]             \ar[r] & 0 \\
\vdots              &          & \vdots                                                           & \vdots                                                                      & \vdots                               &
\end{tikzcd}
\]
For $k \geqslant 0$, let $\varphi_k : Y_k \to Y_{k+1}$ be the morphism given by the sequence above.
Assume that $g_0\, \pi_0 \cdots \pi_k = \varphi_0 \cdots \varphi_k\, g_{k+1} \neq 0$ for $k \geqslant 0$. 
In particular, $\varphi_k$ is non-zero for all $k \geqslant 0$.

\textit{Assume} that for all $N \geqslant 0$ there exists a $k \geqslant N$ such that 
$\varphi_N \cdots \varphi_k \in \rad(Y_N,Y_{k+1})$.
Thus, for all $n \geqslant 0$ there exists a $k \geqslant 0$ such that 
$g_0 \pi_0 \cdots \pi_k = \varphi_0 \cdots \varphi_k g_{k+1} \in \rad^n(Y_0, Z_{k+1})$. 
On the other hand, $g_0 \pi_0 \cdots \pi_k$ is non-zero for all $k$ 
and $Z_0$ has only finitely many indecomposable direct summands.
Using that $\pi_k$ is a split epimorphism, this implies that there must exist a 
$k' \geqslant 0$ such that $g_0 \pi_0 \cdots \pi_k \simeq g_0 \pi_0 \cdots \pi_{k'}$ for all $k \geqslant k'$. 
Thus, we have a projection $\pi$ onto an indecomposable direct summand $Z'$ of $Z_0$
such that $g_0\, \pi \in \rad^n(Y_0,Z')$ for all $n$. However, $g_0\,\pi$ has finite depth by assumption.
\textit{A contradiction.}

Therefore, there exists an $N \geqslant 0$ such that for all $k\geq N$ we have $\varphi_N \cdots \varphi_k \not\in \rad(Y_N,Y_{k+1})$.
By \cref{Lem:f_k_FiniteType}, there exists an $1 \leqslant m < \infty$ such that 
$\depth(f_N\, p) < m$ for all projections $p$ of $Y_N$ onto an indecomposable direct summand.

We know that $\varphi := \varphi_N \varphi_{N+1} \cdots \varphi_{N+m-1}$ is neither zero, 
nor in the radical $\rad_A(Y_N,Y_{N+m})$.
Thus, there exists an indecomposable non-zero module $M \in \rmod A$ and morphisms 
$i : M \to Y_N$ and $p : Y_{N+m} \to  M$ such that the composite 
\[
M \xrightarrow{i} Y_N \xrightarrow{\varphi} Y_{N+m} \xrightarrow{p} M
\] 
is an isomorphism. In particular, $\varphi\,p$ is split so that $\depth(f_N\,\varphi\,p) < m$.
By commutativity of the diagram, we have that $f_N\, \varphi \,p$
factors through $s_k$ for $N \leqslant k \leqslant (N+m-1)$.
However, as a direct sum of irreducible morphism, $s_k$ has depth $1$ for $k \geqslant 0$.
We obtain $\depth(f_N\,\varphi\,p) \geqslant m$, a contradiction.

In conclusion, there exists a minimal $l \geqslant 0$ such that 
$\varphi_0 \cdots \varphi_l\, g_{l+1} =  g_0\, \pi_0 \cdots \pi_l$ is zero.
Since $\eta_0$ is not split, the epimorphism $g_0$ is non-zero and we obtain $\pi_0 \cdots \pi_l = 0$.
However, this is a surjection of $Z$ onto its direct summand $Z_{l+1}$.
This implies $Z_{l+1} = 0$ and thus $\eta_{l+1} = 0$ since $\eta_{l+1}$ has no split summands.
By construction, this means that $\tilde{\eta}_l$ is a split sequence.
It remains to show, that $\eta_{l}$ is an almost split sequence.
Suppose that $\tilde{\eta_l}$ is split.
\[
\tilde{\eta_l} : 0 \rightarrow E_{X_l} \xrightarrow{\matez{v_l}{t_l}} Y_l \oplus T(X_l) \xrightarrow{\matze{g_l}{w_l}} Z_l \rightarrow 0
\]
We obtain that $E_{X_l} \oplus Z_l \simeq Y_l \oplus T(X_l)$.
However, $Z_l$ is not a direct summand of $Y_l$ since $\eta_l$ is not a split sequence by construction.
Hence, $Z_l$ is a direct summand of $T(X_l)$. 
Furthermore, the almost split sequence $0 \rightarrow X_l \to E_{X_l} \to T(X_l) \rightarrow 0$ 
has no split direct summands. 
Hence, $E_{X_l}$ and $T(X_l)$ have no common direct summand.

In conclusion, this results in $Z_l \simeq T(X_l)$ and $Y_l \simeq E_{X_l}$ via some isomorphism $\psi : E_{X_l} \to Y_l$
with $\matez{\psi}{0} \simeq \matez{v_l}{t_l}$. 
Since $s_l \matez{v_l}{t_l} = \matez{f_l}{0}$, we obtain an isomorphism of short exact sequences.
\[
\begin{tikzcd}
           & 0 \ar[r] & X_l \ar[r, "s_l"] \ar[d, equal] & E_{X_l} \ar[r, "t_l"] \ar[d, "\sim"'sloped, "\psi"] & T(X) \ar[r] \ar[d, dashed, "\sim"'sloped] & 0 \\
\eta_{l} : & 0 \ar[r] & X_l \ar[r, "f_l"]               & Y_l     \ar[r, "g_l"]                               & Z_l  \ar[r]                               & 0
\end{tikzcd}
\]
Hence, $\eta_l$ is the direct sum of all almost split sequences starting in direct summands of $X_{l}$.
\end{proof}

Inductively, we aim to construct perfect exact sequences in $\rmod B$ corresponding to $\eta_k$ for $0 \leqslant k \leqslant l$.
The next lemma will be used as the induction step.
\begin{Lemma}\label{Lem:PerfSeqUnderStableEquiv_SingleStep}
Let $\stmod A \xrightarrow{\alpha} \stmod B$ be a stable equivalence.

Suppose given a perfect exact sequence $\eta : 0 \rightarrow X \xrightarrow{f} Y \oplus P \xrightarrow{g} Z \rightarrow 0$ in $\rmod A$ 
without split summands, where $P \in \proj A$ and $Y$ has no projective summand.
Suppose that $X$ has no node as a direct summand.

Assume furthermore, that there exists a $\tilde{Q} \in \proj B$ such that
\[ 
0 \rightarrow \alpha(E_X) \xrightarrow{\matez{\tilde{v}}{\tilde{t}}} (\alpha(Y) \oplus \tilde{Q}) \oplus \alpha(T(X)) \xrightarrow{\matze{\tilde{g}}{\tilde{w}}} \alpha(Z) \rightarrow 0
\]
is a perfect exact sequence in $\rmod B$ where
$\underline{\tilde{v}} \simeq \alpha(\underline{v})$ and $\underline{\tilde{g}} \simeq \alpha(\underline{g})$; cf.\ \cref{Lem:ConstructPerfSeqViaARSeq}.

Then there exist $\tilde{P} \in \proj B$ and a perfect exact sequence in $\rmod B$
\[
0 \rightarrow \alpha(X) \xrightarrow{\tilde{f}} \alpha(Y) \oplus \tilde{P} \xrightarrow{\tilde{g}} \alpha(Z) \rightarrow 0
\]
with $\underline{\tilde{f}} \simeq \alpha(\underline{f})$ and $\underline{\tilde{g}} \simeq \alpha(\underline{g})$ in $\stmod B$.
\end{Lemma}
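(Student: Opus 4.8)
The plan is to carry out, inside $\rmod B$, the analogue of the passage from $\eta$ to $\tilde\eta$ in \cref{Lem:ConstructPerfSeqViaARSeq} and then to reverse it with \cref{Lem:PerfectSeq-Via-Snake}.(1). First I would build the image in $\rmod B$ of the direct sum of almost split sequences attached to $X$. Since $\eta$ has no split summands, $X$ has neither an injective nor a projective direct summand; together with the hypothesis that $X$ has no node summand, every indecomposable summand $X_i$ of $X$ is non-injective, non-projective and not a node, so the almost split sequence $0\to X_i\to E_{X_i}\to\tau^{-1}(X_i)\to 0$ exists. After splitting off the projective part of $E_{X_i}$, \cref{Prop:AssUnderStableEquiv} applies and sends it to an almost split sequence in $\rmod B$ whose maps lift $\alpha(\underline{s_i})$ and $\alpha(\underline{t_i})$. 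Taking the direct sum over $i$ produces a short exact sequence
\[ 0\to\alpha(X)\xrightarrow{\matez{\hat s}{\hat\iota}}\alpha(E_X)\oplus\hat P\xrightarrow{\matze{\hat t}{\hat\pi}}\alpha(T(X))\to 0,\qquad\hat P\in\proj B, \]
with $\underline{\hat s}\simeq\alpha(\underline s)$ and $\underline{\hat t}\simeq\alpha(\underline t)$; since each $\alpha(X_i)$ is non-projective ($\alpha$ being a stable equivalence), each summand, and hence the whole sum, is perfect exact by \cref{Ex:PerfectSeq} together with the additivity of perfect exactness.

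Next I would feed this sequence (as the first sequence) and $\zeta$ (as the second) into \cref{Lem:PerfectSeq-Via-Snake}.(1). This requires that the morphism $\alpha(E_X)\to\alpha(T(X))$ occurring in both sequences, namely $\hat t$ in the first and $\tilde t$ in $\zeta$, be literally the same. Both represent $\alpha(\underline t)$, the almost split epimorphism $t$ being the second component of the mono of $\tilde\eta$, which is the force of the reference ``cf.\ \cref{Lem:ConstructPerfSeqViaARSeq}''; hence $\tilde t-\hat t$ factors through a projective, and this difference can be absorbed by enlarging the projective summands $\hat P$ and $\tilde Q$, i.e.\ by replacing the two sequences by isomorphic ones, which changes neither perfect exactness nor the stable classes $\underline{\hat s},\underline{\tilde v},\underline{\tilde g}$ and makes the two maps coincide. \cref{Lem:PerfectSeq-Via-Snake}.(1) then produces the perfect exact sequence
\[ 0\to\alpha(X)\xrightarrow{\matez{\hat s\tilde v}{\hat\iota}}(\alpha(Y)\oplus\tilde Q)\oplus\hat P\xrightarrow{\matze{\tilde g}{-\hat\pi\tilde w}}\alpha(Z)\to 0. \]

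Finally, setting $\tilde P:=\tilde Q\oplus\hat P\in\proj B$ gives the middle term $\alpha(Y)\oplus\tilde P$. The components $\hat\iota$ and $-\hat\pi\tilde w$ factor through the projective $\hat P$, hence vanish in $\stmod B$, while modulo projectives $\tilde v$ represents $\alpha(\underline v)$ and $\tilde g$ represents $\alpha(\underline g)$. Using $f=sv$ from \cref{Lem:ConstructPerfSeqViaARSeq}, this yields $\underline{\hat s\tilde v}\simeq\alpha(\underline s)\,\alpha(\underline v)=\alpha(\underline{sv})=\alpha(\underline f)$ and $\underline{\tilde g}\simeq\alpha(\underline g)$, which is exactly the assertion.

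I expect the genuine difficulty to be the alignment step: making the shared morphism $\alpha(E_X)\to\alpha(T(X))$ agree on the nose while simultaneously preserving all of $\underline{\hat s}\simeq\alpha(\underline s)$, $\underline{\tilde v}\simeq\alpha(\underline v)$ and $\underline{\tilde g}\simeq\alpha(\underline g)$. This amounts to a careful bookkeeping of which morphisms are prescribed exactly and which only up to morphisms factoring through projectives; once it is settled, the remaining ingredients — the existence of the almost split sum in $\rmod B$, the perfect exactness of the snake output, and the stable-category identifications of the resulting maps — are routine applications of \cref{Prop:AssUnderStableEquiv}, \cref{Ex:PerfectSeq} and \cref{Lem:PerfectSeq-Via-Snake}.
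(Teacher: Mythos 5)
Your proof follows the same strategy as the paper's: apply \cref{Prop:AssUnderStableEquiv} to the direct sum of almost split sequences starting in the summands of $X$ to get a preserved perfect exact sequence $0 \to \alpha(X) \to \alpha(E_X) \oplus \tilde R \to \alpha(T(X)) \to 0$, then feed it together with the assumed sequence into \cref{Lem:PerfectSeq-Via-Snake}.(1), and read off the stable classes using $f = s\,v$. You are right to flag the requirement that the shared morphism $\alpha(E_X) \to \alpha(T(X))$ be literally identical in both inputs to the snake lemma; the paper handles this tacitly by using the same symbol $\tilde t$ for both, whereas your absorption sketch names the issue explicitly but would need more detail to be watertight, since replacing $\hat t$ by $\tilde t$ while keeping $\alpha(X)$ and $\alpha(T(X))$ fixed as the end terms is not just a matter of adding a split projective summand.
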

\begin{proof}
Note that $X$ has no projective direct summand, since the given perfect exact sequence has no split direct summands.
Recall that we have the perfect exact sequence $0 \rightarrow X \xrightarrow{s} E_X \xrightarrow{t} T(X) \rightarrow 0$
in this case; cf.\ \cref{Rem:SumOfAss}.
By \cref{Prop:AssUnderStableEquiv}, there exists an $\tilde{R} \in \proj B$ such that 
\[
0 \rightarrow \alpha(X) \xrightarrow{\matez{\tilde{s}}{\tilde{\iota}}} \alpha(E_X) \oplus \tilde{R} \xrightarrow{\matze{\tilde{t}}{\tilde{\pi}}} \alpha(T(X)) \rightarrow 0
\]
is a perfect exact sequence in $\rmod B$ with $\tilde{\underline{s}} = \alpha(\underline{s})$. 
By assumption, we have the following perfect exact sequence in $\rmod B$ with $\underline{\tilde{v}} = \alpha(\underline{v})$ and $\underline{\tilde{g}} = \alpha(\underline{g})$.
\[
0 \rightarrow \alpha(E_X) \xrightarrow{\matez{\tilde{v}}{\tilde{t}}} (\alpha(Y) \oplus \tilde{Q}) \oplus \alpha(T(X)) \xrightarrow{\matze{\tilde{g}}{\tilde{w}}} \alpha(Z) \rightarrow 0
\]
\cref{Lem:PerfectSeq-Via-Snake}.(1) now provides the following perfect exact sequence. 
\[
0 \to \alpha(X) \xrightarrow{\matez{\tilde{s}\tilde{v}}{\tilde{\iota}}} (\alpha(Y) \oplus \tilde{Q}) \oplus \tilde{R} \xrightarrow{\matez{\tilde{g}}{-\tilde{\pi}\tilde{w}}} \alpha(Z) \to 0
\]
We have 
$\tilde{s}\, \tilde{v} \simeq \alpha(\underline{s})\alpha(\underline{v}) 
    = \alpha(\underline{s\, v}) = \alpha(\underline{f})$ 
and $\tilde{g} \simeq \alpha(\underline{g})$
in $\stmod B$; cf.\ \cref{Lem:ConstructPerfSeqViaARSeq}.
\end{proof}

We are now ready to prove the main result of this section.
\begin{Theorem} \label{Thm:PerfectSeqUnderStableEquiv}
Let $\alpha : \stmod A \to \stmod B$ be a stable equivalence.

Suppose given a perfect exact sequence $0 \rightarrow X \xrightarrow{f} Y \oplus P \xrightarrow{g} Z \rightarrow 0$ 
without split summands where $X$ has no node as a direct summand, $P \in \proj A$ and $Y$ has no projective direct summand.

Suppose that $f\,p$ and $g\,\pi$ have finite depth for every projection $p$ onto an indecomposable direct summand of $Y$
and every projection $\pi$ onto an indecomposable direct summand of $Z$.
Then there exists a perfect exact sequence 
\[
0 \rightarrow \alpha(X) \xrightarrow{\tilde{f}} \alpha(Y) \oplus \tilde{P} \xrightarrow{\tilde{g}} \alpha(Z) \rightarrow 0
\]
in $\rmod B$ with $\tilde{P} \in \proj B$ such that $\underline{\tilde{f}} \simeq \alpha(\underline{f})$ 
and $\underline{\tilde{g}} \simeq \alpha(\underline{g})$ in $\stmod B$.
\end{Theorem}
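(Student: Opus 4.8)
The plan is to feed the given sequence into \cref{Constr:EtaTildeFromEta} as $\eta_0 : 0 \to X \xrightarrow{f} Y \oplus P \xrightarrow{g} Z \to 0$ and then transport the resulting chain $\eta_0, \eta_1, \dots$ to $\rmod B$ by a downward induction. First I would invoke \cref{Lem:ReducePerfSeqToARSeq}: the hypothesis that $f\,p$ and $g\,\pi$ have finite depth for all projections onto indecomposable direct summands of $Y$ and $Z$ is exactly what that lemma requires, so there is an $l \in \ZZ_{\geqslant 0}$ for which $\eta_l$ is a direct sum of almost split sequences. Before starting the induction I would record that every $X_k$ has no node as a direct summand: for $k = 0$ this is assumed, and for $k \geqslant 1$ the module $X_k$ is a direct summand of the starting term $E_{X_{k-1}}$ of $\tilde\eta_{k-1}$, which — being a direct sum of middle terms of almost split sequences — has no node summand by \cref{Lem:NodeAsMiddleTermOfAss}. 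Since each $\eta_k$ has no split summands, $X_k$ also has no injective and no projective direct summand (as in \cref{Lem:ConstructPerfSeqViaARSeq}), so each $\eta_k$ falls within the scope of \cref{Lem:PerfSeqUnderStableEquiv_SingleStep}.

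The core of the argument is then a downward induction on $k$, from $k = l$ down to $k = 0$, producing for each $k$ a perfect exact sequence $0 \to \alpha(X_k) \xrightarrow{\tilde f_k} \alpha(Y_k) \oplus \tilde P_k \xrightarrow{\tilde g_k} \alpha(Z_k) \to 0$ in $\rmod B$ with $\tilde P_k \in \proj B$, $\underline{\tilde f_k} \simeq \alpha(\underline{f_k})$ and $\underline{\tilde g_k} \simeq \alpha(\underline{g_k})$. For the base case $k = l$, the sequence $\eta_l$ is a direct sum of almost split sequences starting in the indecomposable summands of $X_l$, each of which is non-projective, non-injective and not a node; applying the almost-split part of \cref{Prop:AssUnderStableEquiv} summand by summand, using additivity of $\alpha$, and noting that an almost split sequence with non-projective starting term is perfect exact by \cref{Ex:PerfectSeq}, gives the required sequence for $\eta_l$. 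For the inductive step from $\eta_{k+1}$ to $\eta_k$: by \cref{Constr:EtaTildeFromEta} the sequence $\tilde\eta_k$ is, up to isomorphism, the direct sum of $\eta_{k+1}$ with finitely many trivially split short exact sequences, so taking the direct sum of the perfect exact sequence furnished by the induction hypothesis for $\eta_{k+1}$ with the ($\alpha$-images of the) split summands yields a perfect exact sequence in $\rmod B$ for $\tilde\eta_k$, of exactly the shape required as input by \cref{Lem:PerfSeqUnderStableEquiv_SingleStep} for $\eta := \eta_k$ (recall that $f_k = s_k v_k$). That lemma — which internally applies \cref{Prop:AssUnderStableEquiv} to the relevant direct sum of almost split sequences — then outputs the sequence for $\eta_k$. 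Specialising to $k = 0$, and absorbing $\alpha(P)$, which vanishes stably, into the projective term, gives the statement of the theorem.

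The substantive content is already isolated in \cref{Lem:ReducePerfSeqToARSeq} and \cref{Lem:PerfSeqUnderStableEquiv_SingleStep}, so I expect the main difficulty to be bookkeeping rather than a new idea. The delicate points are: reconciling the symbol $\alpha$, which is well defined only up to projective direct summands, with the explicit projective terms $\tilde P_k$ and $\tilde Q_k$ appearing in the sequences over $\rmod B$; checking that the isomorphism identifying $\tilde\eta_k$ with $\eta_{k+1}$ together with split summands — which \cref{Constr:EtaTildeFromEta} explicitly warns need not be the obvious projection — still allows the direct-sum sequence to be brought into the precise normal form demanded by \cref{Lem:PerfSeqUnderStableEquiv_SingleStep}, in particular arranging the component $\alpha(E_{X_k}) \to \alpha(T(X_k))$ to be compatible with the one produced by \cref{Prop:AssUnderStableEquiv}; and propagating the stable-homotopy identifications $\underline{\tilde v_k} \simeq \alpha(\underline{v_k})$ and $\underline{\tilde g_k} \simeq \alpha(\underline{g_k})$ through the re-addition of split summands and through each application of \cref{Lem:PerfSeqUnderStableEquiv_SingleStep}. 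The theorem is then essentially the assembly of these steps along the chain produced by \cref{Constr:EtaTildeFromEta}.
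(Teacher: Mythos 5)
Your proposal follows the paper's proof essentially verbatim: feed $\eta_0$ into \cref{Constr:EtaTildeFromEta}, invoke \cref{Lem:ReducePerfSeqToARSeq} to obtain $l$ with $\eta_l$ a sum of almost split sequences, observe that no $X_k$ has a node summand via \cref{Lem:NodeAsMiddleTermOfAss}, then run a downward induction from $k=l$ using \cref{Prop:AssUnderStableEquiv} for the base case and \cref{Lem:PerfSeqUnderStableEquiv_SingleStep} (after re-adding the split summands to pass from $\eta_{k+1}$ to $\tilde\eta_k$) for the step. The bookkeeping concerns you flag at the end are real but are also left implicit in the paper, so your write-up is at the same level of rigor.
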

\begin{proof}
We denote the given perfect exact sequence by $\eta_0$ and use the notation of \cref{Constr:EtaTildeFromEta}.
By \cref{Lem:ReducePerfSeqToARSeq} there exists an $l \in \ZZ_{\geqslant 0}$ and 
perfect exact sequences $\eta_k$ for $1 \leqslant k \leqslant l$
such that $\eta_l$ is a direct sum of almost split sequences.
Furthermore, $\eta_{k+1}$ is a direct summand of the sequence $\tilde{\eta}_{k}$.
\[
\tilde{\eta}_k : 0 \rightarrow E_{X_k} \rightarrow Y_k \oplus T(X_k) \rightarrow Z \rightarrow 0
\]
By assumption, $X_0$ has no node as a direct summand.
Let $k \geqslant 1$ and assume that $X_k$ has a node as a direct summand. 
Since $X_k$ is a direct summand of $E_{X_{k-1}}$, the node is also
a direct summand of $E_{X_{k-1}}$. However, by \cref{Lem:NodeAsMiddleTermOfAss} 
the middle term of an almost split sequence has no nodes as direct summand. A contradiction.
Thus $X_k$ has no node as a direct summand for all $0 \leqslant k \leqslant l$.

We verify by induction on $0 \leqslant k \leqslant l$ that the assertion holds for $\eta_k$.

Let $k = l$. Then the given perfect exact sequence is a direct sum of almost split sequences and the claim holds by \cref{Prop:AssUnderStableEquiv}.

Let $0 \leqslant k < l$. Suppose that the assertion holds for $\eta_{k+1}$.
We know that $\tilde{\eta}_k$ is the direct sum of $\eta_{k+1}$ and a split exact sequence. 
Therefore, $\alpha$ preserves the perfect exact sequence $\tilde{\eta}_k$ as well.
As a consequence, we can apply \cref{Lem:PerfSeqUnderStableEquiv_SingleStep} to obtain
that $\alpha$ preserves the perfect exact sequence $\eta_k$ and its morphisms.

In conclusion, the assertion holds for all perfect exact sequences $\eta_k$ with $0 \leqslant k \leqslant l$.
In particular, it holds for $\eta_0$.
\end{proof}

As seen in \cref{Ex:Kronecker}, the assumption on the depth of $f$ and $g$ is needed for our proof of this theorem. 
However, it seems unclear whether this assumption is really necessary for the result to hold.

With regard to \cref{Def:PreservePerfSeq_StableEquiv}, we have the following corollary using \cref{Rem:Radiacl_Properties}.
\begin{Corollary} \label{Cor:StableEquivPreservePerfSeq}
Let $A$ and $B$ be finite dimensional algebras without nodes.
Suppose that $A$ and $B$ have finite representation type.

Then every stable equivalence $\alpha : \stmod A \to \stmod B$ and its quasi-inverse preserve perfect exact sequences.
\end{Corollary}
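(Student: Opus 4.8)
The plan is to deduce the statement directly from \cref{Thm:PerfectSeqUnderStableEquiv}, whose two extra hypotheses become automatic under the standing assumptions. Fix a stable equivalence $\alpha : \stmod A \to \stmod B$ and a perfect exact sequence $\eta : 0 \rightarrow X \xrightarrow{f} Y \oplus P \xrightarrow{g} Z \rightarrow 0$ in $\rmod A$ without split summands, with $P \in \proj A$ and $Y$ having no projective direct summand, exactly as in \cref{Def:PreservePerfSeq_StableEquiv}. I must exhibit a perfect exact sequence $0 \rightarrow \alpha(X) \rightarrow \alpha(Y) \oplus \tilde P \rightarrow \alpha(Z) \rightarrow 0$ in $\rmod B$ with $\tilde P \in \proj B$ realizing $\alpha(\underline f)$ and $\alpha(\underline g)$; this is precisely the conclusion of \cref{Thm:PerfectSeqUnderStableEquiv}, so it suffices to verify its hypotheses for $\eta$.

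First I would observe that $X$ has no node as a direct summand: a node is in particular a simple $A$-module, and $A$ has no nodes by hypothesis, so $X$ cannot have one as a summand. Second, I would invoke \cref{Rem:Radiacl_Properties}(5): since $A$ has finite representation type, every morphism in $\rmod A$ has finite depth, so in particular $f\,p$ and $g\,\pi$ have finite depth for every projection $p$ onto an indecomposable direct summand of $Y$ and every projection $\pi$ onto an indecomposable direct summand of $Z$. Both hypotheses of \cref{Thm:PerfectSeqUnderStableEquiv} are thus met, whence $\alpha$ preserves $\eta$ in the sense of \cref{Def:PreservePerfSeq_StableEquiv}. As $\eta$ was arbitrary, $\alpha$ preserves perfect exact sequences.

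For the quasi-inverse, note that a quasi-inverse of $\alpha$ is again a stable equivalence, now $\stmod B \to \stmod A$, and $B$ also has no nodes and finite representation type; applying the previous paragraph with the roles of $A$ and $B$ interchanged shows that the quasi-inverse preserves perfect exact sequences as well. There is essentially no obstacle remaining, since all of the technical content has been absorbed into \cref{Thm:PerfectSeqUnderStableEquiv}; the only points requiring care are that the two hypotheses of that theorem (no node among the summands of the starting term, finite depth of the structure maps restricted to indecomposable summands) are exactly what the no-node and representation-finite assumptions deliver, and that the sequences considered in \cref{Def:PreservePerfSeq_StableEquiv} are already assumed to have no split summands, so no separate bookkeeping of split parts is needed.
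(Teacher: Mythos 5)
Your argument is correct and is exactly the intended derivation: the paper omits a written proof, offering only the pointer to \cref{Rem:Radiacl_Properties}, and your verification that the no-node hypothesis kills nodes among summands of $X$ while representation finiteness (via \cref{Rem:Radiacl_Properties}(5)) supplies the finite-depth condition is precisely what makes \cref{Thm:PerfectSeqUnderStableEquiv} applicable. The symmetric treatment of the quasi-inverse, swapping the roles of $A$ and $B$, is likewise the expected step.
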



\section{Stable Gorenstein-projective modules}
\label{Sec:Gorenstein}

We apply our previous results to the category of stable Gorenstein-projcetive modules.
We begin with the definition of Gorenstein-projective modules.
\begin{Definition} \label{Def:GorensteinProj}
An $A$-module $X$ is said to be \textit{Gorenstein-projective} if there exists a totally acyclic 
complex $F^\bt \in \KKptac A$ such that $\HH^0(\tleq F^\bt) \simeq X$.
\end{Definition}
Let $\Gproj A$ be the full subcategory of $\rmod A$ consisting of Gorenstein-projective modules.
Let $\stgproj A$ be the full subcategory of $\stmod A$ consisting of Gorenstein-projective modules.
Note that a projective $A$-module $P$ is Gorenstein-projective via the complex $0 \to P \to P \to 0$.

The following lemma collects some facts about the category of Gorenstein-projective modules which can be found in 
\cite[Section 2.1]{Chen_Gorenstein}. The first property implies that every short exact sequence in $\Gproj A$ 
is perfect exact; cf.\ \cref{Lem:Ext=0_PerfectSeq}.
\begin{Lemma} \indent \label{Lem:Facts_GorensteinProjective}
\begin{itemize}
\item[(1)] $\Ext^1_A(X,A) = 0$ for $X \in \Gproj A$.

\item[(2)] The syzygy functor $\Omega : \stgproj A \to \stgproj A$ is a self-equivalence of categories.

\item[(3)] The category $\stgproj A$ is triangulated with suspension $\Omega^{-1}$ and distinguished triangles 
           isomorphic to those induced by short exact sequences.
\end{itemize}
\end{Lemma}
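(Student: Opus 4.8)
The plan is to prove (1) directly from total acyclicity, and then to obtain (2) and (3) simultaneously by recognising $\Gproj A$ as a Frobenius exact category whose projective-injective objects are exactly the projective modules.

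\emph{Part (1).} Let $X \in \Gproj A$, witnessed by a totally acyclic complex $F^\bt \in \KKptac A$ with $\HH^0(\tleq F^\bt) \simeq X$. Since $F^\bt$ is acyclic, the truncation $\tleq F^\bt$, augmented by the canonical surjection $F^0 \twoheadrightarrow X$, is a projective resolution of $X$. Hence for $i \geqslant 1$ the group $\Ext^i_A(X,A)$ is the $i$-th cohomology of $\Hom_A(\tleq F^\bt, A)$, and after unwinding the indexing conventions for $F^\ast_\bt$ this cohomology is precisely $\HH_{-i}(F^\ast_\bt)$. By total acyclicity $\HH_{-i}(F^\ast_\bt) = 0$, so $\Ext^i_A(X,A) = 0$ for all $i \geqslant 1$; in particular $\Ext^1_A(X,A) = 0$. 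Via \cref{Lem:Ext=0_PerfectSeq} this also yields the accompanying remark that every short exact sequence ending in a Gorenstein-projective module --- hence every short exact sequence in $\Gproj A$ --- is perfect exact.

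\emph{Parts (2) and (3).} I would equip $\Gproj A$ with the exact structure whose conflations are the short exact sequences in $\rmod A$ all of whose terms lie in $\Gproj A$, and verify the standard closure properties: $\Gproj A$ is closed in $\rmod A$ under extensions, under direct summands, and under syzygies and cosyzygies. For extensions and for (co)syzygies one splices or shifts the defining totally acyclic complexes, using part (1) to lift the relevant morphisms; closure under direct summands is the usual argument producing a complete resolution of a summand. One then checks that each $P \in \proj A$ is both projective (clear) and injective in this exact structure --- for $X \in \Gproj A$ one has $\Ext^1_A(X,P) = 0$ since $P \mid A^n$ and $\Ext^1_A(X,A) = 0$ --- that $\Gproj A$ has enough projectives and injectives via the conflations $0 \to \Omega X \to P \to X \to 0$ and $0 \to X \to Q \to \Omega^{-1} X \to 0$ read off from a complete resolution, with $P, Q \in \proj A$, and conversely that any projective, resp.\ injective, object of $\Gproj A$ is a direct summand of such a $P$, resp.\ $Q$, hence lies in $\proj A$. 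So $\Gproj A$ is Frobenius with projective-injectives exactly $\proj A$. Happel's theorem then shows that the stable category of this Frobenius category, which is $\stgproj A$ since the projective-injectives are precisely $\proj A$, is triangulated, with suspension the cosyzygy functor $\Omega^{-1}$ and inverse suspension the syzygy functor $\Omega$, and with distinguished triangles those induced (up to isomorphism) by conflations, i.e.\ by short exact sequences in $\rmod A$ with all three terms Gorenstein-projective. This is (3), and the mutually quasi-inverse self-equivalences $\Omega^{-1}$, $\Omega$ of $\stgproj A$ give (2).

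\emph{Main obstacle.} The content is entirely classical --- which is why in the paper the lemma is attributed to \cite[Section 2.1]{Chen_Gorenstein} --- so the real work is bookkeeping: carrying out the verification of the Frobenius axioms, and in particular checking that $\Gproj A$ is closed under direct summands and that the cosyzygy is well defined and functorial on $\stgproj A$, i.e.\ that $F^\bt \mapsto F^\bt[1]$ realises it modulo projective summands and independently of the chosen complete resolution.
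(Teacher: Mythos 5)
The paper does not actually prove this lemma: it explicitly attributes all three facts to \cite[Section 2.1]{Chen_Gorenstein} and uses them as black boxes, so there is no ``paper's own proof'' to compare against. Your proposal supplies the standard argument, and it is correct. The indexing check in Part (1) works out: with the paper's conventions, $\tleq F^\bt$ augmented by $F^0 \twoheadrightarrow X$ is a projective resolution (using $\HH^{<0}(F^\bt)=0$), and computing $\Ext^i_A(X,A)$ as the $i$-th cohomology of $\Hom_A(\tleq F^\bt,A)$ gives exactly $\Ker(d^\ast_{-i-1})/\im(d^\ast_{-i}) = \HH_{-i}(F^\ast_\bt)$, which vanishes by the second half of total acyclicity; in fact you get $\Ext^i_A(X,A)=0$ for all $i\geqslant 1$, which you implicitly rely on later. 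For Parts (2) and (3), recognising $\Gproj A$ as a Frobenius exact category with projective--injective objects exactly $\proj A$ and then invoking Happel's theorem is precisely the route the cited survey takes; your identification of the suspension as the cosyzygy $\Omega^{-1}$ (via a conflation $0 \to X \to Q \to \Omega^{-1}X \to 0$ with $Q$ projective) and of $\Omega$ as its quasi-inverse is correct, as is the observation that distinguished triangles are, up to isomorphism, induced by conflations. You honestly flag that closure of $\Gproj A$ under direct summands, the independence of $\Omega^{-1}$ from the chosen complete resolution, and the exact-category axioms require routine verification; these are genuine but standard checks, and nothing in your outline would fail. One small stylistic remark: since the lemma only asserts $\Ext^1_A(X,A)=0$, it would be cleaner to state explicitly that you are proving the stronger vanishing $\Ext^{\geqslant 1}_A(X,A)=0$, because that stronger form is what you actually use when verifying that $\proj A$ consists of injective objects and that $\Gproj A$ is closed under syzygies.
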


Note that $\KKptac A$ is contained in $\LL_A$; cf.\ \cref{Def:L-Category}.
Using this, a Gorenstein-projective module $X$ can be characterized via its image $F_X^\bt$ in $\LL_A$; cf.\ \cref{Thm:Equivalence_F}.
\begin{Lemma} \label{Lem:CharacterisationGorensteinViaShift}
The following are equivalent for $X \in \rmod A$.
\begin{itemize}
\item[(1)] $X$ is Gorenstein-projective.

\item[(2)] $F_X^\bt \in \KKptac A$.

\item[(3)] $F_X^\bt[k] \in \LL_A$ for all $k \in \ZZ$.
\end{itemize}
\end{Lemma}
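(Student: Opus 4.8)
The plan is to establish $(1)\Leftrightarrow(2)$ directly, then $(2)\Rightarrow(3)$, and finally $(3)\Rightarrow(2)$; together these give all the stated equivalences. Throughout I would use the inclusion $\KKptac A\subseteq\LL_A$ recalled above and the fact that $\FF$ from \cref{Thm:Equivalence_F} is an equivalence with quasi-inverse $\HH^0(\tleq(-))$. I would also record once that being totally acyclic is invariant under isomorphism in $\KKp A$: a homotopy equivalence $\varphi\colon F^\bt\to G^\bt$ is a quasi-isomorphism, so $\HH^k(F^\bt)\cong\HH^k(G^\bt)$, and applying the functor $(-)^\ast$ shows $\varphi^\ast$ is again a homotopy equivalence, so $\HH_k(F^{\bt,\ast})\cong\HH_k(G^{\bt,\ast})$.

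For $(1)\Leftrightarrow(2)$: if $F_X^\bt\in\KKptac A$ then $X\simeq\HH^0(\tleq F_X^\bt)$ is Gorenstein-projective by \cref{Def:GorensteinProj}. Conversely, if $X$ is Gorenstein-projective, choose $G^\bt\in\KKptac A$ with $\HH^0(\tleq G^\bt)\simeq X$; then $G^\bt\in\LL_A$, and since $\HH^0(\tleq(-))$ is the quasi-inverse of $\FF$ we get $F_X^\bt=\FF(X)\simeq\FF\big(\HH^0(\tleq G^\bt)\big)\simeq G^\bt$ in $\KKp A$, so $F_X^\bt\in\KKptac A$ by the invariance just noted.

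The implication $(2)\Rightarrow(3)$ is formal: the shift functor of $\KKp A$ only reindexes $\HH^\bullet$ and $\HH_\bullet((-)^\ast)$, so $\KKptac A$ is closed under shift; hence $F_X^\bt[k]\in\KKptac A\subseteq\LL_A$ for all $k\in\ZZ$. The substantive step is $(3)\Rightarrow(2)$. From $F_X^\bt\in\LL_A$ we already know $\HH^{<0}(F_X^\bt)=0$ and $\HH_{\geqslant0}(F_X^{\bt,\ast})=0$, so it remains to show $\HH^m(F_X^\bt)=0$ for $m\geqslant0$ and $\HH_m(F_X^{\bt,\ast})=0$ for $m<0$. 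Using $\HH^n(F_X^\bt[k])=\HH^{n+k}(F_X^\bt)$, the hypothesis $\HH^{<0}(F_X^\bt[k])=0$ forces $\HH^{<k}(F_X^\bt)=0$; letting $k\to+\infty$ kills all cohomology of $F_X^\bt$. Dually, since $(-)^\ast$ intertwines the shift with a reindexing, $\HH_n\big((F_X^\bt[k])^\ast\big)=\HH_{n+k}(F_X^{\bt,\ast})$, and $\HH_{\geqslant0}\big((F_X^\bt[k])^\ast\big)=0$ forces $\HH_{\geqslant k}(F_X^{\bt,\ast})=0$; letting $k\to-\infty$ kills all homology of $F_X^{\bt,\ast}$. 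Thus $F_X^\bt$ is totally acyclic, i.e.\ $F_X^\bt\in\KKptac A$.

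The one point to be careful about is the bookkeeping of the shift and dualization conventions in the $(3)\Rightarrow(2)$ step, so that the homology reindexing is stated with the correct signs and the two limits $k\to\pm\infty$ are used in the right places; once that is pinned down, each implication is a short formal argument.
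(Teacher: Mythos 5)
Your proposal is correct and follows essentially the same route as the paper's proof: $(1)\Leftrightarrow(2)$ via the equivalence $\FF$ and the uniqueness of the totally acyclic representative of $X$ in $\LL_A$, $(2)\Rightarrow(3)$ because total acyclicity is shift-stable, and $(3)\Rightarrow(2)$ by reindexing $\HH^\bullet$ and $\HH_\bullet((-)^\ast)$ under the shift. The only cosmetic difference is that the paper fixes a single index (reading off $\HH^{-1}$ and $\HH_0$ of the shifted complexes) where you let $k\to\pm\infty$; both computations are identical in substance.
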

\begin{proof}
Suppose that $X$ is Gorenstein-projective. Assume that $X$ is not projective, otherwise $F_X^\bt \simeq 0$ in $\LL_A$.

By definition, there exists a totally acyclic complex $P^\bt \in \KKptac A$ such that 
$\HH^0(\tau_{\leqslant 0} P^\bt) \simeq X$. 
However, such a complex $P^\bt$ is an element of $\LL_A$. 
Therefore, we have that $F_X^\bt \simeq P^\bt$ by \cref{Thm:Equivalence_F}, 
since $\HH^0(\tau_{\leqslant 0} P^\bt) \simeq X = \HH^0(\tau_{\leqslant 0} F_X^\bt)$.
We obtain that $F_X$ is totally acyclic.

Recall that $\HH^0(\tleq F_X^\bt) = X$. Hence, (2) implies (1).
Furthermore, a totally acyclic complex $F^\bt$ satisfies $\HH^k(F^\bt) = 0$ and $\HH_k(F_\bt^\ast) = 0$ for all $k \in \ZZ$.
Thus, (2) also implies (3).

Now, suppose that $F_X^\bt[k] \in \LL_A$ for all $k \in \ZZ$.
We show that $F_X^\bt$ is a totally acyclic complex.
Using that 
$\LL_A := \left\lbrace F^\bt \in \KKp{A} \;\vert\; \HH^{<0}(F^\bt) = 0, \, \HH_{\geqslant0}(F^\ast_\bt) = 0 \right\rbrace$, 
we obtain the following for all $k \in \ZZ$.
\begin{align*}
\HH^k(F_X^\bt) = \HH^{-1}(F_X^\bt[k+1]) &= 0 \\
\HH_k((F_X^\ast)_\bt) = \HH_0((F_X^\ast)_\bt[k]) = \HH_0\left((F_X^\bt[k])^\ast\right) &= 0
\end{align*}
In conclusion, $F_X^\bt \in \KKptac A$.
\end{proof}

We recover that the category of stable Gorenstein-projective modules is equivalent to $\KKptac A$.
See \cite[Theorem 4.4.1]{Buchweitz} or \cite[Proposition 7.2]{Krause} for different approaches.
\begin{Lemma} \label{Lem:InducedEquiv_GorensteinKtac}
The equivalence $\FF : \stmod A \to \LL_A$ restricts to an equivalence of triangulated categories
$\stgproj A \xrightarrow{\sim} \KKptac A$.
\end{Lemma}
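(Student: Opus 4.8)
The plan is to combine the characterisation of Gorenstein-projective modules via the shift in $\LL_A$ (\cref{Lem:CharacterisationGorensteinViaShift}) with the fact that $\FF$ is already an equivalence $\stmod A \xrightarrow{\sim} \LL_A$ (\cref{Thm:Equivalence_F}). First I would check that $\FF$ sends $\stgproj A$ into $\KKptac A$: if $X \in \Gproj A$, then by the equivalence (1) $\Leftrightarrow$ (2) of \cref{Lem:CharacterisationGorensteinViaShift} we have $F_X^\bt \in \KKptac A$, so the restriction of $\FF$ takes values in $\KKptac A$. Conversely, if $F^\bt \in \KKptac A \subseteq \LL_A$, then by \cref{Thm:Equivalence_F} there is $X \in \rmod A$ with $F^\bt \simeq F_X^\bt$ in $\LL_A$, hence $F_X^\bt \in \KKptac A$, and again by \cref{Lem:CharacterisationGorensteinViaShift} the module $X$ is Gorenstein-projective. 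Thus the restricted functor $\FF : \stgproj A \to \KKptac A$ is essentially surjective, and it is automatically fully faithful because $\FF$ is fully faithful on all of $\stmod A$; this gives the equivalence of categories.

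It then remains to argue that this equivalence is triangulated. Both sides carry triangulated structures: on $\KKptac A$ the shift is the usual suspension $[1]$ of complexes, with triangles the standard ones of the homotopy category (restricted to totally acyclic complexes, which form a triangulated subcategory since the conditions $\HH^k = 0$, $\HH_k((-)^\ast) = 0$ are closed under shifts, cones, and summands); on $\stgproj A$ the suspension is $\Omega^{-1}$ with triangles coming from short exact sequences (\cref{Lem:Facts_GorensteinProjective}(3)). The key compatibility is that $\FF$ intertwines $\Omega^{-1}$ with $[1]$: for $X \in \Gproj A$ one has a short exact sequence $0 \to X \to P \to \Omega^{-1}X \to 0$ with $P$ projective, which is perfect exact by \cref{Lem:Facts_GorensteinProjective}(1) together with \cref{Lem:Ext=0_PerfectSeq}, and by \cref{Prop:PerfSeq-DistTriang} (or directly by \cref{Lem:Charact_nu(S)=0}) this yields $F_X^\bt[1] \simeq F_{\Omega^{-1}X}^\bt$ in $\LL_A$, naturally in $X$. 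More generally, a short exact sequence of Gorenstein-projective modules is perfect exact (\cref{Lem:Facts_GorensteinProjective}(1) and \cref{Lem:Ext=0_PerfectSeq}), so \cref{Prop:PerfSeq-DistTriang} sends the associated triangle in $\stgproj A$ to a distinguished triangle of the form $F_X^\bt \to F_Y^\bt \to F_Z^\bt \to$ in $\KKp A$; since all three terms are totally acyclic, this is a distinguished triangle in $\KKptac A$. Hence $\FF$ carries a generating class of triangles to distinguished triangles, and being an additive equivalence commuting with suspension, it is an equivalence of triangulated categories.

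The main obstacle I expect is bookkeeping around the triangulated structures rather than any genuine difficulty: one must be careful that the triangles generating $\stgproj A$ really do all come from perfect exact (hence \cref{Prop:PerfSeq-DistTriang}-detectable) short exact sequences, and that the resulting triangles in $\KKp A$ land inside the triangulated subcategory $\KKptac A$ — this last point uses that each $F_{(-)}^\bt$ appearing is totally acyclic by \cref{Lem:CharacterisationGorensteinViaShift}. Once the suspension-compatibility $F_{\Omega^{-1}X}^\bt \simeq F_X^\bt[1]$ is pinned down naturally, the verification that $\FF$ is exact is essentially formal. I would present this compatibility isomorphism carefully, since it is the crux, and treat the fully-faithfulness and essential-surjectivity as immediate consequences of \cref{Thm:Equivalence_F} and \cref{Lem:CharacterisationGorensteinViaShift}.
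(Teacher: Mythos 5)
Your proposal is correct and follows essentially the same route as the paper: restrict $\FF$ using \cref{Lem:CharacterisationGorensteinViaShift}, then obtain the triangulated structure by noting that short exact sequences of Gorenstein-projective modules are perfect exact (\cref{Lem:Facts_GorensteinProjective}(1) with \cref{Lem:Ext=0_PerfectSeq}) and are sent to distinguished triangles via \cref{Prop:PerfSeq-DistTriang}, with the suspension compatibility extracted from the syzygy/cosyzygy sequence. The only cosmetic difference is that you phrase the shift compatibility as $F_X^\bt[1] \simeq F_{\Omega^{-1}X}^\bt$ whereas the paper equivalently uses $F_X^\bt[-1] \simeq F_{\Omega(X)}^\bt$; both give the same natural isomorphism since $\Omega$ is a self-equivalence on $\stgproj A$.
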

\begin{proof}
\cref{Lem:CharacterisationGorensteinViaShift} shows that $\FF$ restricts to an equivalence 
$\stgproj A \to \KKptac A$. It remains to show that this is a triangulated functor.

Using \cref{Lem:Facts_GorensteinProjective}.(1), we know that every short exact sequence in $\Gproj A$ is a
perfect exact sequence by \cref{Lem:Ext=0_PerfectSeq}. By \cref{Prop:PerfSeq-DistTriang}, the functor $\FF$ maps
perfect exact sequences to distinguished triangles in $\KKp A$ and therefore preserves triangles.
In particular, a perfect exact sequence
\[
0 \to \Omega(X) \to P \to X \to 0
\]
with $X \in \Gproj A$ and $P \in \proj A$ corresponds to the following triangle.
\[
F_{\Omega(X)}^\bt \to 0 \to F_X^\bt \to
\]
Thus, we have a natural isomorphism $F_X^\bt[-1] \simeq F_{\Omega(X)}^\bt$ so that $\FF$ commutes with the shift.
\end{proof}

We note that condition (3) in \cref{Lem:CharacterisationGorensteinViaShift} 
can be expressed via the existence of perfect exact sequences with projective middle term.
Recall that a stable equivalence $\stmod A \to \stmod B$ preserves perfect exact sequences if 
$A$ and $B$ are of finite representation type and have no nodes;
cf.\ \cref{Def:PreservePerfSeq_StableEquiv} and \cref{Cor:StableEquivPreservePerfSeq}.
Furthermore, every stable equivalence induced by an exact functor preserves perfect exact sequences with projective
middle term if the inverse equivalence is also induced by an exact functor; cf.\ \cref{Prop:ExactsStableEquiv_ProjMiddleTerm}.
\begin{Lemma} \label{Lem:ShiftUnderStableEquiv}
Let $\alpha : \stmod A \to \stmod B$ be a stable equivalence such that $\alpha$ and its quasi-inverse 
preserve perfect exact sequences with projective middle term. 
Suppose given $X \in \rmod A$ and $k \in \ZZ$.
We have $F_X^\bt[k] \in \LL_A$ if and only if $F_{\alpha(X)}^\bt[k] \in \LL_B$.
\end{Lemma}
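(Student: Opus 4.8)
The plan is to proceed by induction on $|k|$, reducing everything to the case $k = 1$ and $k = -1$, which are handled by \cref{Lem:Charact_nu(S)=0} and \cref{Lem:Ext=0_PerfectSeq} respectively, together with the hypothesis that $\alpha$ and its quasi-inverse preserve perfect exact sequences with projective middle term. The case $k = 0$ is trivial since $F_X^\bt \in \LL_A$ always. By symmetry between $\alpha$ and its quasi-inverse it suffices to prove one implication, say that $F_X^\bt[k] \in \LL_A$ implies $F_{\alpha(X)}^\bt[k] \in \LL_B$.

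First I would treat $k \geqslant 1$. The key observation is that $F_X^\bt[1] \in \LL_A$ is equivalent, by \cref{Lem:Charact_nu(S)=0}, to the existence of a perfect exact sequence $0 \to X \to P \to Z \to 0$ in $\rmod A$ with $P \in \proj A$; and in that case $F_X^\bt[1] \simeq F_Z^\bt$ in $\LL_A$. More generally, if $F_X^\bt[k] \in \LL_A$ for $k \geqslant 1$, then also $F_X^\bt[1] \in \LL_A$, so such a sequence exists; applying $\alpha$ and using that $\alpha$ preserves perfect exact sequences with projective middle term (and that $\alpha(P) = 0$ in $\stmod B$, so the image sequence has projective middle term) yields a perfect exact sequence $0 \to \alpha(X) \to \tilde P \to \alpha(Z) \to 0$ in $\rmod B$ with $\tilde P \in \proj B$. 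Hence $F_{\alpha(X)}^\bt[1] \in \LL_B$ and $F_{\alpha(X)}^\bt[1] \simeq F_{\alpha(Z)}^\bt$. Now induct: from $F_X^\bt[k] \in \LL_A$ we get $F_Z^\bt[k-1] \simeq F_X^\bt[k] \in \LL_A$, so by induction $F_{\alpha(Z)}^\bt[k-1] \in \LL_B$, whence $F_{\alpha(X)}^\bt[k] \simeq F_{\alpha(X)}^\bt[1][k-1] \simeq F_{\alpha(Z)}^\bt[k-1] \in \LL_B$.

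The case $k \leqslant -1$ is dual, using \cref{Lem:Ext=0_PerfectSeq}: $F_X^\bt[-1] \in \LL_A$ is equivalent to $\Ext^1_A(X,A) = 0$, equivalently to the existence of a perfect exact sequence $0 \to W \to P \to X \to 0$ with $P \in \proj A$, and then $F_X^\bt[-1] \simeq F_W^\bt$. If $F_X^\bt[k] \in \LL_A$ with $k \leqslant -1$ then $F_X^\bt[-1] \in \LL_A$, such a sequence exists, applying $\alpha$ gives a perfect exact sequence $0 \to \alpha(W) \to \tilde P \to \alpha(X) \to 0$ with $\tilde P \in \proj B$, so $F_{\alpha(X)}^\bt[-1] \simeq F_{\alpha(W)}^\bt \in \LL_B$; and induction on $|k|$ as above (replacing $X$ by $W$ and $k$ by $k+1$) finishes it. The main obstacle I anticipate is purely bookkeeping: one must check that the natural isomorphisms $F_X^\bt[1] \simeq F_Z^\bt$ in $\LL_A$ (provided by \cref{Lem:Charact_nu(S)=0} and \cref{Prop:PerfSeq-DistTriang}) are compatible with applying $\alpha$, i.e.\ that passing through the equivalence $\FF$ of \cref{Thm:Equivalence_F} and the hypothesis on $\alpha$ genuinely transports the shifted complex; but since $\FF$ is an equivalence and the shift of $F_X^\bt$ is characterized up to isomorphism by its truncation, this is immediate once the relevant perfect exact sequence has been produced in $\rmod B$.
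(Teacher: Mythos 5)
Your proposal is correct and follows essentially the same route as the paper: establish the case $k=1$ (resp.\ $k=-1$) by translating $F_X^\bt[1]\in\LL_A$ into the existence of a perfect exact sequence with projective middle term, applying $\alpha$, and translating back; then iterate, and use the quasi-inverse for the converse implication. The paper invokes \cref{Prop:PerfSeq-DistTriang} directly rather than routing through \cref{Lem:Charact_nu(S)=0} and \cref{Lem:Ext=0_PerfectSeq}, but those lemmas are themselves immediate consequences of \cref{Prop:PerfSeq-DistTriang}, so this is a cosmetic difference. One small point worth making explicit in a polished write-up: your reduction relies on the observation that, since $F_X^\bt\in\LL_A$ always holds, $F_X^\bt[k]\in\LL_A$ for $k\geqslant 1$ already forces $F_X^\bt[j]\in\LL_A$ for every $1\leqslant j\leqslant k$ (and dually for $k\leqslant -1$); this follows by comparing the vanishing ranges $\HH^{<0}$ and $\HH_{\geqslant 0}((-)^\ast)$ for the two shifts, and the paper's ``inductively'' step uses the same fact implicitly.
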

\begin{proof}
If $F_X^\bt[1] \in \LL_A$, there exists a $Y \in \rmod A$ such that $F_Y^\bt \simeq F_X^\bt[1]$ in $\KKp A$.
We have a distinguished triangle $F_X^\bt \rightarrow 0 \rightarrow F_Y^\bt \rightarrow $ in $\KKp A$.
By \cref{Prop:PerfSeq-DistTriang}, the triangle induces a perfect exact sequence 
$0 \rightarrow X \rightarrow P \rightarrow Y \rightarrow 0$
with some $P \in \proj A$.
By assumption, we obtain a perfect exact sequence
$0 \rightarrow \alpha(X) \rightarrow Q \rightarrow \alpha(Y) \rightarrow 0$
with some $Q \in \proj B$.
By \cref{Prop:PerfSeq-DistTriang} the sequence induces a distinguished triangle in $\LL_B$.
\[
F_{\alpha(X)} \rightarrow 0 \rightarrow F_{\alpha(Y)} \rightarrow
\]
We obtain that $F_{\alpha(X)}[1] \simeq F_{\alpha(Y)} \in \LL_B$.
Swapping the roles of $X$ and $Y$ shows that $F_X^\bt[-1] \in \LL_A$ implies $F_{\alpha(X)}[-1] \in \LL_B$ as well.
Inductively, we have that $F_X^\bt[k] \in \LL_A$ implies $F_{\alpha(X)}^\bt[k] \in \LL_B$ for all $k \in \ZZ$.

Let $\beta : \stmod B \to \stmod A$ be the quasi-inverse of $\alpha$.
The same argument as above yields $F_{\beta(\alpha(X))}^\bt[k] \in \LL_A$ if $F_{\alpha(X)}[k] \in \LL_B$.
Since $\beta(\alpha(X)) \simeq X$ in $\stmod A$, \cref{Thm:Equivalence_F} shows that 
$F_X^\bt[k] \simeq F_{\beta(\alpha(X))}^\bt[k] \in \LL_A$ for $k \in \ZZ$.
\end{proof}

In the setting of the previous lemma, a stable equivalence restricts to an equivalence between the stable categories
of Gorenstein-projective modules. If $\alpha$ additionally preserves arbitrary perfect exact sequences, this restriction is
a triangulated equivalence.
\begin{Theorem} \label{Thm:InducedEquivOnGorensteinProj}
Let $\alpha : \stmod A \to \stmod B$ be a stable equivalence such that 
$\alpha$ and its quasi-inverse preserve perfect exact sequences.
Then $\alpha$ restricts to a triangulated equivalence
\[
\underline{\mathrm{Gproj}}\, A \to \underline{\mathrm{Gproj}}\, B\,.
\]
\end{Theorem}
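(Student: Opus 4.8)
The plan is to deduce this from the characterization of Gorenstein-projective modules in terms of shifts in $\LL_A$ (\cref{Lem:CharacterisationGorensteinViaShift}), together with the shift-preservation statement \cref{Lem:ShiftUnderStableEquiv}, and then upgrade the resulting equivalence of categories to a triangulated one. First I would observe that since $\alpha$ and its quasi-inverse preserve perfect exact sequences, in particular they preserve perfect exact sequences with projective middle term, so \cref{Lem:ShiftUnderStableEquiv} applies: for every $X \in \rmod A$ and every $k \in \ZZ$ we have $F_X^\bt[k] \in \LL_A$ if and only if $F_{\alpha(X)}^\bt[k] \in \LL_B$. By \cref{Lem:CharacterisationGorensteinViaShift}, $X$ is Gorenstein-projective if and only if $F_X^\bt[k] \in \LL_A$ for all $k$, and likewise for $\alpha(X)$ over $B$. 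Hence $X \in \stgproj A$ if and only if $\alpha(X) \in \stgproj B$, so $\alpha$ restricts to an equivalence $\stgproj A \to \stgproj B$ (its quasi-inverse restricts to the quasi-inverse in the same way).

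It remains to show this restriction is a triangulated functor. By \cref{Lem:Facts_GorensteinProjective}.(3), the triangulated structure on $\stgproj A$ has suspension $\Omega^{-1}$ and distinguished triangles isomorphic to those coming from short exact sequences; moreover by \cref{Lem:Facts_GorensteinProjective}.(1) and \cref{Lem:Ext=0_PerfectSeq}, every short exact sequence in $\Gproj A$ is perfect exact. So I would first check that $\alpha$ commutes with the suspension: for $X \in \Gproj A$ there is a perfect exact sequence $0 \to \Omega(X) \to P \to X \to 0$ with $P \in \proj A$, and since $\alpha$ preserves perfect exact sequences this yields a perfect exact sequence $0 \to \alpha(\Omega(X)) \to \tilde P \to \alpha(X) \to 0$ with $\tilde P \in \proj B$ and the stable maps matching $\alpha$ applied to the originals (\cref{Def:PreservePerfSeq_StableEquiv}). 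A perfect exact sequence ending in a Gorenstein-projective module with projective middle term exhibits the starting term as a syzygy, so $\alpha(\Omega(X)) \simeq \Omega(\alpha(X))$ in $\stgproj B$, naturally in $X$; equivalently $\alpha$ commutes with the suspension $\Omega^{-1}$.

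Next I would check that $\alpha$ sends distinguished triangles to distinguished triangles. A distinguished triangle in $\stgproj A$ is, up to isomorphism, one induced by a short exact sequence $0 \to X \to Y \to Z \to 0$ with all terms Gorenstein-projective, hence perfect exact. Applying the preservation hypothesis (writing $Y = Y' \oplus P$ with $Y'$ having no projective summand, or just noting that projective summands of the middle term are harmless) produces a perfect exact sequence $0 \to \alpha(X) \to \alpha(Y) \oplus \tilde P \to \alpha(Z) \to 0$ in $\rmod B$ whose stable maps are $\alpha(\underline f)$ and $\alpha(\underline g)$; since all three terms are Gorenstein-projective this is a short exact sequence in $\Gproj B$ (the middle term $\alpha(Y)\oplus \tilde P$ is Gorenstein-projective, $\tilde P$ being projective), so it induces a distinguished triangle in $\stgproj B$, which is isomorphic to $\alpha$ applied to the original triangle. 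Finally one must confront the standard subtlety: the connecting morphism $Z \to \Omega^{-1}(X)$ of the triangle built in $\stgproj B$ must agree (up to the isomorphism $\alpha(\Omega(X)) \simeq \Omega(\alpha(X))$ already established) with $\alpha$ of the connecting morphism of the original triangle. I expect this compatibility of the connecting maps to be the main obstacle; I would handle it by comparing both triangles to the one obtained by pulling back the syzygy sequence $0 \to \Omega(Z) \to Q \to Z \to 0$ along the given map, using functoriality of this pullback construction under $\alpha$ together with the fact that $\alpha$ preserves the relevant perfect exact sequences and their morphisms in the stable category. Once the three data — commuting with $\Omega^{-1}$, sending triangles to triangles, and compatibility of connecting morphisms — are in place, $\alpha|_{\stgproj A}$ is a triangulated equivalence, with quasi-inverse the restriction of the quasi-inverse of $\alpha$.
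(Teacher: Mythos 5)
Your proposal takes essentially the same route as the paper: the restriction of $\alpha$ to an equivalence $\stgproj A \to \stgproj B$ is obtained from \cref{Lem:CharacterisationGorensteinViaShift} together with \cref{Lem:ShiftUnderStableEquiv}, the commutation with $\Omega$ is extracted from the image of the syzygy sequence $0 \to \Omega(X) \to P \to X \to 0$ being perfect exact, and the preservation of distinguished triangles follows from all short exact sequences of Gorenstein-projective modules being perfect exact (\cref{Lem:Facts_GorensteinProjective}.(1) with \cref{Lem:Ext=0_PerfectSeq}). The only cosmetic difference is that the paper phrases the identification $\alpha(\Omega(X)) \simeq \Omega(\alpha(X))$ through the $\LL$-category, matching $F_{\alpha(\Omega(X))}^\bt \simeq F_{\alpha(X)}^\bt[-1] \simeq F_{\Omega(\alpha(X))}^\bt$, while you argue directly in the stable category via syzygies; these are equivalent.

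Where you go beyond the paper is in flagging the compatibility of the connecting morphism: you explicitly ask whether the third map of the produced triangle agrees, modulo the isomorphism $\alpha(\Omega(X)) \simeq \Omega(\alpha(X))$, with $\alpha$ applied to the original connecting map. The paper's proof does not address this; it moves directly from ``$\alpha$ preserves short exact sequences'' to ``and thus distinguished triangles'' and spends the rest of the argument on the $\Omega$-compatibility. Your instinct to compare both candidate connecting maps against the pullback of a fixed syzygy sequence is a reasonable way to close that point, and doing so would make the argument strictly more complete than the paper's. Since you leave this step as a sketch rather than a finished verification, your proposal sits at the same level of rigor as the paper, but with the remaining subtlety named rather than elided.
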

\begin{proof}
Let $X \in \underline{\mathrm{Gproj}}\,A$. 
By \cref{Lem:CharacterisationGorensteinViaShift}, we have that $F_X^\bt[k] \in \LL_A$ for all $k \in \ZZ$.
Using \cref{Lem:ShiftUnderStableEquiv} and \cref{Lem:CharacterisationGorensteinViaShift} again,
we obtain that $\alpha(X) \in \underline{\mathrm{Gproj}}\, B$.
In conclusion, $\alpha$ restricts to an equivalence
$\alpha : \underline{\mathrm{Gproj}}\, A \to \underline{\mathrm{Gproj}}\, B\,$.
It remains to show that this is a triangulated functor.

By \cref{Lem:Facts_GorensteinProjective}.(1) all short exact sequences of Gorenstein-projective modules are
perfect exact, so that $\alpha$ preserves short exact sequences and thus distinguished triangles.
We show that $\alpha(\Omega(X)) \simeq \Omega(\alpha(X))$ in $\stgproj B$ for all $X \in \stgproj A$.

Let $0 \to \Omega(X) \to P \to X \to 0$ be a short exact sequence without split summands where $P$ is
the projective cover of $X$. We know that this sequence lies in $\Gproj A$ and therefore
must be a perfect exact sequence. By assumption, we obtain a perfect exact sequence
\[
0 \to \alpha(\Omega(X)) \to \tilde{P} \to \alpha(X) \to 0
\]
in $\Gproj B$ with $\tilde{P} \in \proj B$. 
By \cref{Prop:PerfSeq-DistTriang}, this induces the following distinguished triangle.
\[
F_{\alpha(\Omega(X))}^\bt \to 0 \to F_{\alpha(X)}^\bt \to
\]
Thus, we have a natural isomorphism $F_{\alpha(\Omega(X))}^\bt \simeq F_{\alpha(X)}^\bt[-1]$.
On the other hand, consider the short exact sequence 
$0 \to \Omega(\alpha(X)) \to Q \to \alpha(X) \to 0$ with $Q$ the projective cover of $\alpha(X)$ in $\rmod B$.
As above, this is a perfect exact sequence and therefore induces the following distinguished triangle.
\[
F_{\Omega(\alpha(X))}^\bt \to 0 \to F_{\alpha(X)}^\bt \to
\]
Thus, we also have a natural isomorphism $F_{\alpha(X)}^\bt[-1] \simeq F_{\Omega(\alpha(X))}^\bt$.
Together, we obtain a natural isomorphism $F_{\alpha(\Omega(X))}^\bt \simeq F_{\Omega(\alpha(X))}^\bt$ in $\LL_A$.
Finally, this induces the claimed natural isomorphism $\alpha(\Omega(X)) \simeq \Omega(\alpha(X))$ in $\stgproj B$.
\end{proof}

\section{Stable equivalences of Morita type}
\label{Sec:StEquivMoritaType}

Suppose given bimodules $M$ and $N$ that induce a stable equivalence between $A$ and $B$.
If $M$ and $N$ have no projective bimodule as direct summand, componentwise application of $-\otimes_A M$ induces an 
equivalence $-\otimes_A M : \LL_A \to \LL_B$; cf.\ \cite[Theorem 6.5]{Nitsche_TriangulatedHull}.

We aim to show that any equivalence $- \otimes_A M : \LL_A \to \LL_B$ 
with an arbitrary bimodule $\!{}_BM_A$ induces a stable equivalence of Morita type $\stmod A \to \stmod B$.
The proof is based on \cite[Theorem 2.9]{DugasMartinezVilla_MoritaType} by Dugas and Mart\'inez-Villa.
However, we need a slightly different version of this theorem, 
with $\!{}_BN_A = \Hom_B(M,B)$ instead of $\!{}_BN_A = \Hom_A(M,A)$.
\begin{Corollary}[Dugas, Mart\'inez-Villa] \label{Cor:ExactFunctorIsStableEquivOfMoritaType}
Let $A$ and $B$ be finite dimensional $k$-algebras whose semisimple quotients are separable.
Suppose that ${}_AM_B$ is projective as left $A$- and as right $B$-module such that 
$-\otimes_A M$ induces a stable equivalence $\stmod A \to \stmod B$.

If $\!{}_BN_A := \Hom_B(M,B)$ is projective over $A$, then $M$ and $N$ induce a stable equivalence of Morita type between 
$A$ and $B$.
\end{Corollary}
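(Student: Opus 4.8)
The plan is to deduce this from the original theorem of Dugas and Mart\'inez-Villa, \cite[Theorem 2.9]{DugasMartinezVilla_MoritaType}, which gives the same conclusion but with the hypothesis phrased in terms of $\Hom_A(M,A)$ rather than $\Hom_B(M,B)$. So the crux is to show that the two conditions ``$\Hom_A(M,A)$ is projective over $B$'' and ``$\Hom_B(M,B)$ is projective over $A$'' agree, or at least that the second implies the first, in the present setting. First I would recall that since ${}_AM_B$ is projective as a left $A$-module, say a direct summand of a free module $A^n$, there are natural isomorphisms of functors; in particular $\Hom_A(M,A)$ is a direct summand of $\Hom_A(A^n,A)\simeq (A^n)_{\text{op-side}}$, hence projective as a \emph{left} $B$-module automatically, and the content is its projectivity as a \emph{right} $A$-module. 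Dually $\Hom_B(M,B)$ is automatically projective as a right $A$-module (using $M_B$ projective) and the hypothesis supplies its projectivity as a left $B$-module.

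Next I would exploit the stable-equivalence hypothesis to relate the two dual bimodules. Since $-\otimes_A M$ is a stable equivalence, there is a quasi-inverse; and a standard fact (as used in \cite{DugasMartinezVilla_MoritaType}, cf.\ also \cite{ChenPanXi_MoritaType}) is that when ${}_AM_B$ is projective on both sides and induces a stable equivalence, the adjoint functor $\Hom_B(M,B)\otimes_B-$ (equivalently $-\otimes_B\Hom_B(M,B)$ on the appropriate side) also induces a stable equivalence, and moreover $\Hom_A(M,A)$ and $\Hom_B(M,B)$ become isomorphic \emph{up to a projective bimodule summand}. Concretely, I would show $N := \Hom_B(M,B)$ satisfies $M\otimes_B N\simeq A\oplus(\text{something})$ as bimodules, and track where projectivity over $A$ of $N$ is needed versus where it is supplied by the hypothesis. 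The key identity to establish is a natural isomorphism $\Hom_B(M,B)\simeq \Hom_A(M,A)\otimes_?(\cdots)$ or an isomorphism in the stable category that, combined with the separability of the semisimple quotients, lifts to an honest isomorphism of bimodules modulo projectives — this is exactly the place where the separability hypothesis enters, just as in the cited theorem.

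The main obstacle I anticipate is precisely this comparison of $\Hom_A(M,A)$ with $\Hom_B(M,B)$: a priori these are genuinely different bimodules, and one must use both one-sided projectivity conditions on $M$ together with the stable-equivalence property to see that they carry the same $B$-module structure up to projective summands. Once that is in hand, one verifies $\Hom_A(M,A)$ is projective over $B$ (the remaining unverified half being inherited from the hypothesis on $\Hom_B(M,B)$ via the comparison), and then \cite[Theorem 2.9]{DugasMartinezVilla_MoritaType} applies verbatim to give that $M$ and $\Hom_A(M,A)$ induce a stable equivalence of Morita type; a final short argument, replacing $\Hom_A(M,A)$ by its stably-isomorphic partner $N=\Hom_B(M,B)$ and absorbing the projective bimodule discrepancy into the $P$ and $Q$ of \cref{Def:StEquivMoritaType}, yields the stated conclusion that $M$ and $N$ themselves induce a stable equivalence of Morita type between $A$ and $B$.
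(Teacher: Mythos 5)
Your proposal identifies the right target theorem to cite, but both the bookkeeping of which projectivities are automatic and the overall strategy diverge from the paper in ways that leave a genuine gap.

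First, a concrete error: the automatic directions are reversed. From ${}_AM$ projective one gets that $\Hom_A(M,A)$ is a summand of $\Hom_A(A^n,A)\simeq A^n$ \emph{as a right $A$-module}, so it is $\Hom_A(M,A)_A$ that is automatic; the nontrivial condition for DMV's Theorem 2.9 is projectivity of $\Hom_A(M,A)$ \emph{as a left $B$-module}. Dually, from $M_B$ projective one gets ${}_B\Hom_B(M,B)$ automatically projective, and the hypothesis of the corollary supplies precisely the nonautomatic side, namely $N_A=\Hom_B(M,B)_A$. You state exactly the opposite in both cases, which undermines the claimed division into "automatic half" and "supplied half."

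Second, and more importantly, your main plan --- establish that $\Hom_A(M,A)$ is projective over $B$ if and only if $\Hom_B(M,B)$ is projective over $A$, then apply DMV in the forward direction --- requires a comparison of $\Hom_A(M,A)$ with $\Hom_B(M,B)$ up to projective bimodule summands. But the isomorphisms you invoke (as in \cite[Lemma 3.3]{DugasMartinezVilla_MoritaType} or \cite[Lemma 4.1]{ChenPanXi_MoritaType}) are stated for bimodules \emph{already known} to give a stable equivalence of Morita type, which is exactly the conclusion; so the comparison step you flag as the "main obstacle" is not just technically unfinished, it is circular as laid out. The paper sidesteps this entirely: since $M_B$ is finitely generated projective, the adjunction $\Hom_B(X\otimes_A M,Y)\simeq\Hom_A(X,Y\otimes_B N)$ holds on the nose, and by Auslander--Kleiner (\cite[Proposition 1.1]{AuslanderKleiner}) this descends to the stable categories, so $-\otimes_B N$ is the quasi-inverse stable equivalence $\stmod B\to\stmod A$. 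One then applies DMV Theorem 2.9 \emph{to the bimodule $N={}_BN_A$ in the reverse direction}, for which the required bimodule is $\Hom_B(N,B)$; by double duality for the finitely generated projective $M_B$ one has $\Hom_B(N,B)=\Hom_B(\Hom_B(M,B),B)\simeq M$, which is projective on both sides by hypothesis. This avoids any comparison of $\Hom_A(M,A)$ with $\Hom_B(M,B)$. The missing idea in your proposal is precisely this reversal together with the double-dual identification.
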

\begin{proof}
Using that $M_B$ is projective, we have the following sequence of natural isomorphisms for all $Y \in \rmod B$.
\begin{align*}
\Hom_B(X \otimes_A M, Y_B) 
    &\simeq \Hom_A(X_A, \Hom_B(M_B,Y_B)_A)
    \simeq \Hom_A(X_A, Y \otimes_B \Hom_B(M,B)_A) \\
    &\simeq \Hom_A(X_A, Y \otimes_B N_A)
\end{align*}
Thus, $-\otimes_A M$ is left adjoint to $-\otimes_B N$.
By a result of Auslander and Kleiner in \cite[Proposition 1.1]{AuslanderKleiner}, we obtain that
$- \otimes_A M$ is left adjoint to $- \otimes_B N$ in $\stmod A$ since $- \otimes_A M$ and $- \otimes_B N$ 
take projective modules to projective modules.
Hence, $- \otimes_B N : \stmod B \to \stmod A$ is the quasi-inverse of 
$- \otimes_A M : \stmod A \to \stmod B$. In particular, $- \otimes_A N$ induces a stable equivalence.

We set ${}_B\tilde{M}_A := \!{}_BN_A$ and ${}_A\tilde{N}_B := \Hom_B(N,B)$.
Then ${}_A\tilde{N}_B \simeq {}_AM_B$ as bimodules and $\tilde{M}$ and $\tilde{N}$
are are projective on both sides.
The result follows by applying \cite[Theorem 2.9]{DugasMartinezVilla_MoritaType} to $\tilde{M}$ and $\tilde{N}$ while 
switching the role of $A$ and $B$. 
\end{proof}

\begin{Remark} \label{Rem:Separability}
Suppose given a projective bimodule ${}_A P_A$. Then $X \otimes_A P_A$ is a projective $A$-module
for all $X \in \rmod A$. The converse does not hold in general. 

However, it does hold, if we assume that 
the semisimple quotients of $A$ and $B$ are separable; cf.\ \cite[Corollary 3.1]{AuslanderReiten_Transpose}
and also \cite[Theorem 2.8]{DugasMartinezVilla_MoritaType}.
This separability assumption is satisfied in the following cases among others.
\begin{itemize}
\item[$\bt$] $k$ is a perfect field.
\item[$\bt$] $A$ and $B$ are given by quivers with relations.
\end{itemize}
\end{Remark}

Note that $\!{}_BN_A$ is projective over $A$ if and only if $\!{}_BN \otimes_A -$ is an exact functor.
We aim to use that a complex $F^\bt \in \LL_A$ can be thought of as a projective resolution $F^{\leqslant 0}$ in $\rmod A$
and a projective resolution $F_{\geqslant -1}^{\ast}$ in $A$-mod. 
Additionally, we need the following result.
\begin{Lemma} \label{Lem:NatIsom_BimodulesHomDual}
Suppose that $M$ is an $A$-$B$-bimodule. Let ${}_BN_A := \Hom_B({}_AM_B,B)$.

For every $P \in \proj A$ there exists a natural isomorphism of left $B$-modules
\[
(P \otimes_A M_B)^\ast \simeq {}_B N \otimes_A P^\ast.
\]
\end{Lemma}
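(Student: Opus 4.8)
The plan is to reduce the statement to the well-known natural isomorphism $(-)^\ast \simeq \Hom_A(-,A)$ being compatible with the tensor–hom adjunction, then to check naturality. First I would prove the isomorphism for the free module $P = A$: here $(A \otimes_A M_B)^\ast = (M_B)^\ast = \Hom_B(M,B) = {}_BN$ by definition, while ${}_BN \otimes_A A^\ast = {}_BN \otimes_A A = {}_BN$, using that $A^\ast = \Hom_A(A,A) \simeq A$ as an $A$-$A$-bimodule. So both sides are canonically $N$, and it is routine to check that the obvious identification is an isomorphism of left $B$-modules. The map in general should be the composite
\[
{}_BN \otimes_A P^\ast = \Hom_B(M,B) \otimes_A \Hom_A(P,A) \longrightarrow \Hom_B(P \otimes_A M, B) = (P \otimes_A M_B)^\ast,
\]
where the middle arrow sends $\varphi \otimes \psi$ to the map $p \otimes m \mapsto \varphi\big(\psi(p)\,m\big)$; one checks this is well defined over $A$ and $B$-linear on the left, using that $M$ is an $A$-$B$-bimodule so that $\psi(p) \in A$ acts on $M$ from the left.

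Next I would verify this is an isomorphism for all $P \in \proj A$. Since both functors $P \mapsto ({}_BN \otimes_A P^\ast)$ and $P \mapsto (P \otimes_A M_B)^\ast$ are additive, and the proposed map is a natural transformation between them (naturality in $P$ is the only genuinely verification-heavy point, but it is formal diagram-chasing from the definitions), it suffices that the map is an isomorphism on $P = A$, which was the first step. Additive functors agreeing naturally on $A$ agree on all finitely generated projectives, because every such $P$ is a direct summand of some $A^n$ and the natural transformation respects direct sums.

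The main obstacle — though it is more bookkeeping than a genuine difficulty — is keeping the left/right module structures straight: $N = \Hom_B(M,B)$ is a $B$-$A$-bimodule, $P^\ast = \Hom_A(P,A)$ is an $A$-$A$-bimodule (in fact, for $P$ projective, an object of $A\text{-proj}$), and one must confirm that the tensor product ${}_BN \otimes_A P^\ast$ carries a well-defined left $B$-module structure matching the one on $(P \otimes_A M_B)^\ast$ coming from the right $B$-action on $M$. Once the $P = A$ case and additivity/naturality are in place, the general statement follows immediately, and the naturality asserted in the lemma is exactly the naturality of the transformation exhibited above restricted to morphisms in $\proj A$.
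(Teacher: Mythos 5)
Your proof is correct, but it follows a genuinely different route than the paper. You build the natural map ${}_BN \otimes_A P^\ast \to (P \otimes_A M_B)^\ast$ directly by the formula $\varphi \otimes \psi \mapsto \bigl(p \otimes m \mapsto \varphi(\psi(p)\,m)\bigr)$, verify it is an isomorphism at $P = A$, and then extend over $\proj A$ using additivity and naturality in $P$ (both functors are additive, $P$ is a direct summand of $A^n$, and a natural transformation that is invertible at $A$ is invertible at every f.g.\ projective). The paper instead first applies the tensor–hom adjunction to rewrite $(P \otimes_A M_B)^\ast \simeq \Hom_A(P, {}_BN_A)$, and then proves $\Hom_A(P, {}_BN_A) \simeq {}_BN \otimes_A P^\ast$ by presenting $N$ as a $B$-$A$-bimodule via surjections from free bimodules $B \otimes_k A^{\oplus n}$ and chasing a commutative diagram with exact rows, using exactness of $\Hom_A(P,-)$ for projective $P$. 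Both arguments are valid; yours is more elementary and self-contained (it avoids the adjunction step and any bimodule presentation of $N$, and makes the explicit formula for the isomorphism visible), while the paper's makes the passage through $\Hom_A(P,N)$ explicit, which is convenient for tracking bimodule structures when the result is invoked later (e.g.\ in \cref{Thm:EquivOnLL-MoritaType}). The one place you should spell out a bit more is the verification that your formula is well-defined with respect to the balanced tensor over $A$ (i.e.\ $\varphi a \otimes \psi$ and $\varphi \otimes a\psi$ have the same image), but as you say this is routine bookkeeping and it does work out.
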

\begin{proof}
We have the following natural isomorphism of left $B$-modules.
\begin{align*}
(P \otimes_A M_B)^\ast &= \Hom_B(P \otimes_A M_B, {}_BB_B) \\
                  &\simeq \Hom_A(P_A, \Hom_B(M_B,{}_BB_B))   \\
                       &= \Hom_A(P_A,{}_BN_A)              
\end{align*}
We show hat $\Hom_A(P_A,{}_BN_A) \simeq {}_B N \otimes_A P^\ast$ using that $P_A$ is projective 
and that $N_A$ is finitely generated.

Since ${}_BB \otimes_k A_A$ is projective as a right $A$-module and since $P_A$ is finitely generated, 
we have the following natural isomorphism of left $B$-modules.
\[
\Hom_A(P_A, {}_BB \otimes_k A_A) \simeq {}_BB \otimes_k A_A \otimes_A P^\ast
\]
Moreover, $\Hom_A(P_A, {}_BB \otimes_k A_A^{\oplus n}) \simeq {}_BB \otimes_k A^{\oplus n} \otimes_A P^\ast$ 
for all $n \in \ZZ_{\geqslant 1}$.

Let $N_A = \left\langle g_1,\dots, g_n \right\rangle$ be a minimal generating system of $N$ as a right $A$-module 
with $n \in \ZZ_{\geqslant 1}$. Consider the following surjective map.
\begin{align*}
B \otimes_k A^{\oplus n}   &\xrightarrow{\varphi} {}_B N_A \\
b \otimes (a_1, \dots, a_n) &\mapsto b \sum\limits_{k=1}^n g_k a_k
\end{align*}
Note that $\varphi$ is a morphism of $B$-$A$-bimodules, so that $\Ker(\varphi)$ is also a $B$-$A$-bimodule.
In particular, $\Ker(\varphi)$ is an $A$-submodule of $B \otimes_k A^{\oplus n}$. Since $A$ is finite dimensional,
$\Ker(\varphi)$ is finitely generated as a right $A$-module.
Therefore, there exists an $m \in \ZZ_{\geqslant 1}$ and a surjective morphism 
$B \otimes_k A^{\oplus m} \to \Ker(\varphi)$ as above.

We obtain a presentation $B \otimes_k A^{\oplus m} \to B \otimes_k A^{\oplus n} \to {}_B N_A \to 0$ of $N$ via bimodules.
Consider the following commutative diagram with exact rows. For the upper row we have used that $P_A$ is projective.
\[
\begin{tikzcd}
\Hom_A(P, B \otimes_k A^{\oplus m})       \ar[r] \ar[d, "\sim" sloped] & \Hom_A(P, B \otimes_k A^{\oplus n})        \ar[r] \ar[d, "\sim" sloped] & \Hom_A(P, N)        \ar[r] \ar[d, dashed, "\sim" sloped] & 0 \\
B \otimes_k A^{\oplus m} \otimes_A P^\ast \ar[r]                        & B \otimes_k A^{\oplus n} \otimes_A P^\ast  \ar[r]                        & N \otimes_A P^\ast  \ar[r]                                & 0
\end{tikzcd}
\]
By the above, the two morphisms on the left are isomorphisms.
Therefore, we obtain a natural isomorphism of left $B$-modules 
$\Hom_A(P_A,{}_BN_A) \simeq {}_B N \otimes_A P^\ast$.
\end{proof}

We are now ready to state the main result of this section.
\begin{Theorem} \label{Thm:EquivOnLL-MoritaType} 
Let $A$ and $B$ be finite dimensional $k$-algebras whose semisimple quotients are separable.

Suppose given a bimodule ${}_A M_B$ such that applying $-\otimes_A M$ componentwise induces an equivalence 
$\LL_A \xrightarrow{\sim} \LL_B$.
Let ${}_BN_A := \Hom_B(M,B)$. Then $M$ and $N$ induce a stable equivalence of Morita type between $A$ and $B$.
\end{Theorem}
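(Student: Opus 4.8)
The plan is to verify that $M$ and $N$ fulfil the hypotheses of \cref{Cor:ExactFunctorIsStableEquivOfMoritaType}: that ${}_AM$ and $M_B$ are projective, that $-\otimes_A M$ induces a stable equivalence $\stmod A \to \stmod B$, and that $N_A$ is projective; the separability assumption is then spent only in applying that corollary. As already indicated before \cref{Lem:NatIsom_BimodulesHomDual}, the guiding idea is that $F^\bt \in \LL_A$ carries the minimal projective resolution $\tleq F^\bt$ of a module in $\rmod A$ and the minimal projective resolution $\tau_{\geqslant -1} F_X^{\bt, \ast}$ of a module in $A$-mod, so the conditions $\HH^{<0}(F^\bt)=0$ and $\HH_{\geqslant 0}(F_\bt^\ast)=0$ defining $\LL_A$ turn into Tor-vanishing statements once $-\otimes_A M$ is applied componentwise.

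First I would produce the stable equivalence together with $M_B$ projective. By hypothesis $-\otimes_A M$ restricts to an equivalence $\LL_A \to \LL_B$, so $\alpha := \HH^0(\tleq(-)) \circ (-\otimes_A M) \circ \FF$ is a well-defined equivalence $\stmod A \to \stmod B$ by \cref{Thm:Equivalence_F}. For $X \in \rmod A$, the definition of the truncation together with right exactness of $-\otimes_A M$ yields
\[
\alpha(X) = \HH^0\bigl(\tleq(F_X^\bt \otimes_A M)\bigr) = \Cok\bigl(F_X^{-1}\otimes_A M \to F_X^0 \otimes_A M\bigr) \simeq X \otimes_A M
\]
naturally in $X$. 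Hence $X \otimes_A M$, regarded in $\stmod B$, depends only on $X \in \stmod A$; taking $X$ projective gives $P \otimes_A M \in \proj B$ for all $P \in \proj A$, and $P = A$ gives that $M_B$ is projective. In particular $-\otimes_A M$ descends to a functor $\stmod A \to \stmod B$, which is isomorphic to $\alpha$ and is therefore a stable equivalence.

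Next I would extract the two remaining projectivities, one from each defining condition of $\LL_B$. For an arbitrary $R \in \rmod A$ the complex $\tleq F_R^\bt$ is the minimal projective resolution of $R$, so $\HH^{-j}(F_R^\bt \otimes_A M) \simeq \Tor^A_j(R, M)$ for $j \geqslant 1$; since $F_R^\bt \otimes_A M \in \LL_B$ forces $\HH^{<0}(F_R^\bt \otimes_A M) = 0$, we get $\Tor^A_{\geqslant 1}(R, M) = 0$ for every $R$, so ${}_AM$ is flat, hence projective. For $N_A$, apply \cref{Lem:NatIsom_BimodulesHomDual} componentwise: by naturality this gives an isomorphism of complexes $(F^\bt \otimes_A M)^\ast \simeq N \otimes_A F^{\bt, \ast}$ for all $F^\bt \in \LL_A$. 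Given a left $A$-module $L$ without projective summands, put $X := \Tr L \in \rmod A$, so that $\Tr X \simeq L$ and $\tau_{\geqslant -1} F_X^{\bt, \ast}$ is the minimal projective resolution of $L$. Then $F_X^\bt \otimes_A M \in \LL_B$ gives $\HH_{\geqslant 0}\bigl((F_X^\bt \otimes_A M)^\ast\bigr) = 0$, hence $\HH_{\geqslant 0}\bigl(N \otimes_A F_X^{\bt, \ast}\bigr) = 0$, which, once chain degrees are matched with resolution degrees, says exactly that $\Tor^A_j(N, L) = 0$ for $j \geqslant 1$. As $L$ was arbitrary, $N_A$ is flat, hence projective. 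Now \cref{Cor:ExactFunctorIsStableEquivOfMoritaType} applies and yields that $M$ and $N = \Hom_B(M,B)$ induce a stable equivalence of Morita type.

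I expect the step producing $N_A$ to be the main obstacle. Beyond transporting the equivalence through the componentwise duality by means of \cref{Lem:NatIsom_BimodulesHomDual}, one has to align carefully the chain-complex indexing of $F_X^{\bt, \ast}$ with the homological indexing of a projective resolution, and one must use that every left $A$-module is, up to projective summands, of the form $\Tr X$ for some $X$ with $F_X^\bt \in \LL_A$ --- which amounts to the essential surjectivity of the transpose duality. The other steps are, by contrast, fairly direct applications of right exactness and of the two vanishing conditions built into the definition of $\LL$.
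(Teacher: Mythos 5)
Your proposal is correct and follows essentially the same route as the paper: extract ${}_AM$ projective from the vanishing $\HH^{<0}$, extract $N_A$ projective from the vanishing $\HH_{\geqslant 0}$ via \cref{Lem:NatIsom_BimodulesHomDual} applied componentwise (with $\tau_{\geqslant -1}F_X^{\bt,\ast}$ as projective resolution of $\Tr X$), and then invoke \cref{Cor:ExactFunctorIsStableEquivOfMoritaType}. The only cosmetic difference is that you derive $M_B \in \proj B$ by passing through the induced stable equivalence $\alpha$, whereas the paper observes directly that every projective $A$-module occurs as a component of some complex in $\LL_A$, so its image must be a component of a complex in $\KKp B$; the two observations are interchangeable.
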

\begin{proof}
We show that $M$ is projective as left $A$- and  as right $B$-module 
and we show that $N$ is projective as left $B$- and right $A$-module. 

Since $-\otimes_A M$ maps projective $A$-modules to projective $B$-modules, we have that $M \in \proj B$.
Moreover, this means that $N \in B$-proj.

Let $X \in \rmod A$. Suppose given a projective resolution $P^\bt \in \KKp A$ of $X$.
Then $\tleq F_X^\bt \simeq P^\bt$ in $\KKp A$.
Using that $-\otimes_A M$ is a right exact functor with image in $\LL_B$, 
we obtain that $F_X^\bt \otimes_A M \simeq F_{X \otimes M}^\bt$ .
Hence, $P^\bt \otimes_A M \simeq \tleq (F_X^\bt \otimes_A M) \simeq \tleq F_{X\otimes M}^\bt$ 
is a projective resolution of $X \otimes_A M$.
Thus, we have $\Tor_i^A(X,M) \simeq \HH^{-i}(P^\bt \otimes_A M) = 0$ for all $i\geq 1$.
This implies that $M$ is projective as a left $A$-module.

Let $Y$ be a left $A$-module. Suppose given a projective resolution $Q^\bt$ of $Y$ in $\mathrm{K}(A\text{-proj})$.
There exists an $X \in \rmod A$ such that $\Tr X = Y$.
Then $\tau_{\geqslant -1} F_X^{\bt,\ast} \simeq Q^\bt$ in $\mathrm{K}(A\text{-proj})$.
By \cref{Lem:NatIsom_BimodulesHomDual} we have that  
$\tau_{\geqslant -1}(N \otimes_A F_X^{\bt,\ast}) \simeq \tau_{\geqslant -1} (F_X^\bt \otimes_A M)^\ast$ 
as complexes. 
\[
\begin{tikzcd}
\cdots \ar[r] & N \otimes_A F_1^\ast       \ar[r] \ar[d, "\sim" sloped] & N \otimes_A F_0^\ast       \ar[r] \ar[d, "\sim" sloped] & N \otimes_A F_{-1}^\ast       \ar[r] \ar[d, "\sim" sloped] & N \otimes_A \Tr(X) \ar[r] \ar[d, dashed, "\sim" sloped] & 0 \\
\cdots \ar[r] & (F^1 \otimes_A M)^\ast     \ar[r]                       & (F^0 \otimes_A M)^\ast     \ar[r]                       & (F^{-1} \otimes_A M)^\ast     \ar[r]                       & \Tr(X \otimes_A M) \ar[r]                       & 0 
\end{tikzcd}
\]
Since $F_X^\bt \otimes_A M \in \LL_B$, we have that $\tau_{\geqslant -1} (F_X^\bt \otimes_A M)^\ast$ 
is a projective resolution of $\Tr(X\otimes M)$. Hence 
\[
N \otimes_A Q^\bt \simeq \tau_{\geqslant -1}(N \otimes_A F_X^{\bt,\ast}) \simeq \tau_{\geqslant -1} (F_X^\bt \otimes_A M)^\ast
\]
is a projective resolution of $N \otimes_A \Tr(X)$.
Thus, we have $\Tor_i^A(N,\Tr(X)) \simeq \HH^{-i}(N \otimes_A Q^\bt) = 0$ for $i \geqslant 0$.
This implies that $N$ is projective as a right $A$-module.

Since the functor $-\otimes_A M$ induces an equivalence $\LL_A \xrightarrow{\sim} \LL_B$ and is exact, this also induces 
an equivalence $\stmod A \to \stmod B$.
Now, \cref{Cor:ExactFunctorIsStableEquivOfMoritaType} shows that $M$ and $N$ induce a stable equivalence 
of Morita type between $A$ and $B$.
\end{proof}

For the remainder of this section, suppose that $\!{}_BM_A$ is a bimodule which is projective as left $A$- and as right $B$-module such that 
$-\otimes_A M$ induces a stable equivalence $\stmod A \to \stmod B$.

For self-injective algebras, Rickard has shown in \cite[Theorem 3.2]{Rickard_RecentAdvances}
that such a functor is isomorphic to a stable equivalence of Morita type.
As described above, Dugas and Mart\'inez-Villa provide the following generalization for arbitrary algebras which satisfy the 
separability condition; cf. \cite[Theorem 2.9]{DugasMartinezVilla_MoritaType}. 
A stable equivalence that is induced by an exact functor $- \otimes_A M$ is of Morita type 
if and only if $\Hom_A(M,A)$ is projective on both sides.

We aim to give other sufficient conditions for $-\otimes_A M$ to be a stable equivalence of Morita type.
We start with the following corollary to \cref{Thm:EquivOnLL-MoritaType}.
For a stable equivalence induced by an exact functor it remains to check that
$\HH_k\big((F^\bt \otimes_A M)^\ast\big) = 0$ for $F^\bt \in \LL_A$ and $k \geqslant 0$. 
\begin{Corollary} \label{Cor:MoritaType-Iff-Nu}
Let $A$ and $B$ be finite dimensional $k$-algebras whose semisimple quotients are separable. 
Let ${}_AM_B$ be a bimodule which is projective as left $A$- and as right $B$-module such that
$-\otimes_A M$ induces a stable equivalence $\stmod A \to \stmod B$.

Then $M$ and $\Hom_B(M,B)$ induce a stable equivalence of Morita type between $A$ and $B$ if
one of the following equivalent conditions holds.
\begin{itemize}
\item[(1)] There exist natural isomorphisms $\nu_B(P \otimes_A M) \simeq \nu_A(P) \otimes_A M$ of right $B$-modules 
           for every $P \in \proj A$.

\item[(2)] There exists a natural isomorphism $M \otimes_B \Du\!B \simeq \Du\!A \otimes_A M$ of right $B$-modules.
\end{itemize}

\end{Corollary}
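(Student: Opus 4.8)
The plan is to deduce the statement from \cref{Cor:ExactFunctorIsStableEquivOfMoritaType}: since ${}_AM_B$ is projective on both sides and $-\otimes_A M$ induces a stable equivalence, it suffices to prove that $N := \Hom_B(M,B)$ is projective as a right $A$-module, and this is exactly what hypothesis (1) (equivalently (2)) will provide. So the proof has two parts: the equivalence of (1) and (2), and the implication ``(2) $\Rightarrow$ $N_A$ projective''.

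First I would settle the equivalence of (1) and (2). For $P \in \proj A$ there is a natural isomorphism $\nu_A(P) \simeq P \otimes_A \Du\!A$, and since $M_B$ is projective the module $P \otimes_A M$ lies in $\proj B$, so likewise $\nu_B(P \otimes_A M) \simeq (P\otimes_A M)\otimes_B \Du\!B \simeq P \otimes_A (M \otimes_B \Du\!B)$, both isomorphisms being natural in $P$. Hence (1) amounts to the existence of an isomorphism $P \otimes_A (M\otimes_B \Du\!B) \simeq P \otimes_A (\Du\!A \otimes_A M)$ natural in $P \in \proj A$; evaluating at $P = A$ yields (2), while applying $P \otimes_A -$ to the isomorphism of (2) recovers (1). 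Here I would record the standard remark that a natural transformation between the functors $-\otimes_A U$ and $-\otimes_A V$ on $\proj A$ is the same datum as an $A$-$B$-bimodule homomorphism $U \to V$, so that ``(2) as bimodules'' and ``(1)'' carry the same information; this makes the equivalence precise.

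Next, assuming (2), I would prove that $N_A$ is projective. Since $M_B$ is finitely generated projective, evaluation gives a natural isomorphism of $A$-$B$-bimodules $M \otimes_B \Du\!B \simeq \Du\Hom_B(M,B) = \Du N$ (check it for $M = B$ and extend by additivity). Combined with (2) this gives $\Du N \simeq \Du\!A \otimes_A M$ as left $A$-modules. Now ${}_AM$ is projective, hence a direct summand of some ${}_A A^{\oplus n}$; applying $\Du\!A \otimes_A -$ exhibits ${}_A(\Du\!A \otimes_A M)$ as a direct summand of ${}_A(\Du\!A)^{\oplus n}$. As ${}_A\Du\!A$ is injective (it is the $k$-dual of the projective generator $A_A$), the module ${}_A(\Du\!A \otimes_A M) \simeq {}_A\Du N$ is injective; applying the exact duality $\Du$, which interchanges injective left $A$-modules and projective right $A$-modules, we conclude that $N_A \simeq \Du({}_A\Du N)$ is projective.

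Finally, ${}_AM_B$ is projective on both sides, $-\otimes_A M$ induces a stable equivalence $\stmod A \to \stmod B$, and $N = \Hom_B(M,B)$ is projective over $A$, so \cref{Cor:ExactFunctorIsStableEquivOfMoritaType} yields that $M$ and $N$ induce a stable equivalence of Morita type. (Alternatively, once $N_A$ is projective one may argue through \cref{Thm:EquivOnLL-MoritaType}: \cref{Lem:NatIsom_BimodulesHomDual} gives $(F^\bt \otimes_A M)^\ast \simeq N \otimes_A F^{\bt,\ast}$, and flatness of ${}_AM$ and of $N_A$ forces $F^\bt \otimes_A M \in \LL_B$ for every $F^\bt \in \LL_A$, so $-\otimes_A M$ restricts to an equivalence $\LL_A \to \LL_B$.) The argument is essentially formal; the only step needing genuine care is the bookkeeping of left/right module structures, the crux being the chain ``${}_AM$ projective $\Rightarrow$ ${}_A(\Du\!A \otimes_A M)$ injective $\Rightarrow$ its $k$-dual $N_A$ projective''.
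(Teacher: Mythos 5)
Your proof is correct, and it takes a genuinely different and shorter route than the paper. The paper uses condition (1) to verify $\HH_k((F^\bt \otimes_A M)^\ast)=0$ for all $F^\bt \in \LL_A$ and $k\geqslant 0$ (via the chain $\HH_k((F^\bt\otimes M)^\ast)\simeq\HH^k(\nu_B(F^\bt\otimes M))\simeq\HH^k(\nu_A(F^\bt))\otimes M\simeq\HH_k(F^\ast_\bt)\otimes M=0$), concludes that $-\otimes_A M$ restricts to an equivalence $\LL_A\to\LL_B$, and then invokes \cref{Thm:EquivOnLL-MoritaType}. You instead pass directly to the underlying input of \cref{Cor:ExactFunctorIsStableEquivOfMoritaType}, namely that $N_A=\Hom_B(M,B)$ is projective: from $\Du N\simeq M\otimes_B\Du B$ (additivity, using $M_B\in\proj B$) and the bimodule form of (2) you get ${}_A\Du N\simeq{}_A(\Du A\otimes_A M)$, which is injective because ${}_AM$ is projective and $\Du A\otimes_A-$ is the Nakayama functor for left modules, whence $N_A\simeq\Du({}_A\Du N)$ is projective. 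This avoids the $\LL$-category detour entirely; what it costs is the explicit observation that the isomorphism in (2) must be read as a bimodule isomorphism. You handle this correctly by pointing out that a natural transformation $-\otimes_A U\to -\otimes_A V$ on $\proj A$ is the same as an $A$-$B$-bimodule map $U\to V$ -- the paper's own derivation of (2)$\Rightarrow$(1) (``every projective is a summand of $A^{\oplus n}$'') silently relies on exactly this and is slightly less careful than your formulation. Minor point: the parenthetical ``alternatively'' at the end is circular as written, since it assumes $N_A$ projective in order to re-derive the equivalence on $\LL$; but it is clearly offered as an aside and does not affect the main argument.
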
 
\begin{proof}
Suppose that condition (1) holds. 
Let $F^\bt \in \LL_A$. Note that $-\otimes_A M$ is exact since ${}_AM$ is projective.
By assumption, we have the following for $k \geqslant 0$.
\begin{align*}
                \quad & \HH_k((F^\bt \otimes_A M)^\ast) = 0 \\
\Leftrightarrow \quad & \HH^k(\nu_B(F^\bt \otimes_A M)) = 0 \\
\Leftrightarrow \quad & \HH^k(\nu_A(F^\bt) \otimes_A M) = 0 \\
\Leftrightarrow \quad & \HH^k(\nu_A(F^\bt)) \otimes_A M = 0 \\
\Leftrightarrow \quad & \HH_k(F_\bt^\ast) \otimes_A M = 0
\end{align*}
The last equation holds, since $F^\bt \in \LL_A$. As a result, we have $F^\bt \otimes_A M \in \LL_B$ and
$- \otimes_A M$ induces an equivalence $\LL_A \to \LL_B$.
By \cref{Thm:EquivOnLL-MoritaType}, we obtain that $M$ and $\Hom_B(M,B)$ 
induce a stable equivalence of Morita type between the algebras $A$ and $B$.

It remains to show the equivalence of (1) and (2). We have the following natural isomorphisms of right $B$-modules.
\begin{align*}
&\nu_A(A) \otimes_A M \simeq \Du\Hom_A(A, A) \simeq \Du\!A \otimes_A M \\
&\Du(M \otimes_B \Du\!B) = \Hom_k(M \otimes_B \Du\!B, k) \simeq \Hom_B(M, \Hom_k(\Du\!B,k)) 
    \simeq \Hom_B(M, B) = M_B^\ast
\end{align*}
Using the above, we see that condition (1) implies condition (2) by letting $P = A$.
\[
\Du\!A \otimes_A M \simeq \nu_A(A) \otimes_A M \simeq \nu_B(A \otimes_A M) \simeq \nu_B(M) \simeq M \otimes_B \Du\!B.
\]
Since every projective $A$-module is a direct summand of $A^{\oplus n}$ for some $n \in \ZZ$, this also shows that 
(2) implies (1).
\end{proof}
Note that if $M$ and $\Hom_B(M,B)$ do not have any non-zero projective bimodules as direct summands and if they induce
a stable equivalence of Morita type, then $\nu_B(X \otimes_A M) \simeq \nu_A(X) \otimes_A M$ for every $X \in \rmod A$;
cf.\ \cite[Lemma 3.3]{DugasMartinezVilla_MoritaType} and \cite[Lemma 4.1]{ChenPanXi_MoritaType}.

Now, we aim to use our previous results about perfect exact sequences.
The following theorem by Yoshino provides a way to relate $\HH_k(F_\bt^\ast)$ 
with $(\HH^k(F^\bt))^\ast$ and $\Ext_A^1\!\big(\Cok(d_F^k),A\big)$.
Although the theorem is stated for commutative rings, the proof also holds for arbitrary finite dimensional algebras.
\begin{Theorem}{\rm(\cite[Theorem 2.3]{Yoshino_UnboundedComplexes})} \label{Thm:SequenceWith_H(*)-H()*} 
Suppose given $F^\bt \in \KKp A$ and $M \in \rmod A$.

For all $k \in \ZZ$ there exists an exact sequence
\[
0 \to \Ext_A^1\!\big(\Cok(d_F^k),M\big) \to \HH^k\!\big(\Hom_A(F^\bt,M)\big) \to \Hom_A\!\big(\HH^k(F^\bt),M\big) \to \Ext^2_A\!\big(\Cok(d_F^k),M\big).
\]
\end{Theorem}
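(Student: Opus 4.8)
The plan is to compute $\HH^k\!\big(\Hom_A(F^\bt,M)\big)$ directly from the three-term piece $F^{k-1}\xrightarrow{d_F^{k-1}}F^k\xrightarrow{d_F^k}F^{k+1}$ and then to peel off the contribution of $\Cok(d_F^k)$ by a filtration argument. Write $B^k:=\im d_F^{k-1}$, $Z^k:=\Ker d_F^k$, $J:=\im d_F^k$ and $W:=\Cok(d_F^k)=F^{k+1}/J$, so that $\HH^k(F^\bt)=Z^k/B^k$ and $\Cok(d_F^{k-1})=F^k/B^k$. Factoring $d_F^k$ as $F^k\xrightarrow{q}J\hookrightarrow F^{k+1}$ and noting that a homomorphism $F^k\to M$ is annihilated by precomposition with $d_F^{k-1}$ exactly when it kills $B^k$, one obtains inside $\Hom_A(F^k,M)$ a chain of submodules
\[
\im\big((d_F^k)^\ast\big)\ \subseteq\ \Hom_A(J,M)\ \subseteq\ \Hom_A\big(\Cok(d_F^{k-1}),M\big)\ =\ \Ker\big((d_F^{k-1})^\ast\big),
\]
where $\Hom_A(J,M)$ sits inside via $q^\ast$ and where $\im((d_F^k)^\ast)$ corresponds, under this identification, to the image of the restriction map $\Hom_A(F^{k+1},M)\to\Hom_A(J,M)$. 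Since $\HH^k\!\big(\Hom_A(F^\bt,M)\big)$ is by definition the top subquotient $\Ker((d_F^{k-1})^\ast)/\im((d_F^k)^\ast)$, the chain above yields the short exact sequence
\[
0\to \Hom_A(J,M)\big/\im\big((d_F^k)^\ast\big)\to \HH^k\!\big(\Hom_A(F^\bt,M)\big)\to \Ker\big((d_F^{k-1})^\ast\big)\big/\Hom_A(J,M)\to 0 .
\]

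I would then identify the two outer terms. Applying $\Hom_A(-,M)$ to $0\to J\to F^{k+1}\to W\to 0$ and using $\Ext_A^{\geqslant 1}(F^{k+1},M)=0$ — the only place where projectivity of the terms of $F^\bt$ is used — shows that the restriction map $\Hom_A(F^{k+1},M)\to\Hom_A(J,M)$ has cokernel $\Ext_A^1(W,M)$, hence $\Hom_A(J,M)/\im((d_F^k)^\ast)\simeq\Ext_A^1(\Cok(d_F^k),M)$; the same sequence gives the dimension shift $\Ext_A^1(J,M)\simeq\Ext_A^2(\Cok(d_F^k),M)$. For the other term, I apply $\Hom_A(-,M)$ to the short exact sequence $0\to\HH^k(F^\bt)\to\Cok(d_F^{k-1})\to J\to 0$, which is the third isomorphism theorem for $B^k\subseteq Z^k\subseteq F^k$, obtaining
\[
0\to\Hom_A(J,M)\to\Hom_A\big(\Cok(d_F^{k-1}),M\big)\to\Hom_A\big(\HH^k(F^\bt),M\big)\xrightarrow{\ \partial\ }\Ext_A^1(J,M),
\]
so that $\Ker((d_F^{k-1})^\ast)/\Hom_A(J,M)$ is identified with $\Ker(\partial)\subseteq\Hom_A(\HH^k(F^\bt),M)$.

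Finally I would splice the pieces together: the first short exact sequence becomes $0\to\Ext_A^1(\Cok(d_F^k),M)\to\HH^k(\Hom_A(F^\bt,M))\to\Ker(\partial)\to 0$, and composing the inclusion $\Ker(\partial)\hookrightarrow\Hom_A(\HH^k(F^\bt),M)$ with $\partial$ followed by the isomorphism $\Ext_A^1(J,M)\simeq\Ext_A^2(\Cok(d_F^k),M)$ produces exactly the asserted four-term exact sequence — exactness at $\Hom_A(\HH^k(F^\bt),M)$ is precisely the statement that its image there equals $\Ker(\partial)$. I do not expect a genuine obstacle: apart from the two invocations of $\Ext_A^{\geqslant 1}(F^{k+1},M)=0$, everything is the third isomorphism theorem together with the elementary subquotient sequence attached to a chain $B\subseteq A\subseteq C$, so the work is bookkeeping; one also checks in passing that all the maps are natural in $F^\bt$ and in $M$, although the statement does not require this.
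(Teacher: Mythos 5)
Your argument is correct, and it proves exactly the stated four-term exact sequence. The key points are all in order: the three submodules $\im((d_F^k)^\ast)\subseteq q^\ast\Hom_A(J,M)\subseteq\Ker((d_F^{k-1})^\ast)$ of $\Hom_A(F^k,M)$ do nest as you say (a map on $F^k$ lies in the image of $(d_F^k)^\ast$ only if it kills $Z^k=\Ker d_F^k$, hence factors through $J$; a map factoring through $J$ a fortiori kills $B^k=\im d_F^{k-1}$); the short exact sequence of subquotients attached to this chain has middle term precisely $\HH^k(\Hom_A(F^\bt,M))=\Ker((d_F^{k-1})^\ast)/\im((d_F^k)^\ast)$; and the two identifications of the outer terms via $0\to J\to F^{k+1}\to\Cok d_F^k\to 0$ (using only $\Ext_A^{\geqslant 1}(F^{k+1},M)=0$, i.e.\ projectivity of $F^{k+1}$) and via $0\to\HH^k(F^\bt)\to\Cok d_F^{k-1}\to J\to 0$ (the third isomorphism theorem for $B^k\subseteq Z^k\subseteq F^k$) are compatible with the embeddings chosen, as you note. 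Splicing then gives the four-term sequence with exactness at $\Hom_A(\HH^k(F^\bt),M)$ encoded by the equality of the image of the second map with $\Ker(\partial)$.

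Be aware, though, that the paper does not give its own proof here: it cites Yoshino's Theorem 2.3 and only remarks that the proof (written there for commutative rings) works verbatim over arbitrary finite dimensional algebras. So there is no in-paper argument to compare against. What you have written is a clean, self-contained proof, and it in fact establishes the stronger remark the paper is relying on: since the only input is projectivity of the terms of $F^\bt$, the result holds over any ring, with no commutativity or finiteness hypotheses. One small stylistic quibble: the phrase ``composing the inclusion $\Ker(\partial)\hookrightarrow\Hom_A(\HH^k(F^\bt),M)$ with $\partial$'' reads as if you intend the literal composite (which is zero); what you mean is that you append to the short exact sequence the inclusion into $\Hom_A(\HH^k(F^\bt),M)$ as the second map and $\partial$ (post-composed with $\Ext_A^1(J,M)\simeq\Ext_A^2(\Cok d_F^k,M)$) as the third. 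Rephrasing that sentence would remove any ambiguity.
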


Note that $(\HH^k(F^\bt))^\ast = 0$ for $F^\bt \in \LL_A$ and $k \in \ZZ$ if $\ddim A \geqslant 1$,
as follows from \cref{Lem:L_in_Pperp}.
By \cref{Lem:Charact_nu(S)=0}, the vanishing of $S^\ast$ for a simple module $S$ is invariant under
stable equivalences that preserve perfect exact sequences with projective middle term.

\begin{Lemma} \label{Lem:nu(X)=0-iff-nu(S)=0}
Let $Y \in \rmod A$ such that
every short exact sequence $0 \to X \to Y' \to S \to 0$
with $Y'$ a submodule of $Y$ and $S$ a simple $A$-module is a perfect exact sequence.

Then $Y^\ast = 0$ if and only if $S^\ast = 0$ for all composition factors $S$ of $Y$.
\end{Lemma}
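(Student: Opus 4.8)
The plan is to induct on the composition length of $Y$, using the hypothesis to feed short exact sequences into a long exact $\Hom$-sequence. The direction ``$S^\ast = 0$ for all composition factors $\Rightarrow Y^\ast = 0$'' is the substantial one; the converse is immediate since $(-)^\ast = \Hom_A(-,A)$ is left exact, so a surjection $Y \twoheadrightarrow S$ onto a quotient that is a composition factor induces an injection $S^\ast \hookrightarrow Y^\ast$, whence $Y^\ast = 0$ forces $S^\ast = 0$; and every composition factor of $Y$ arises as a quotient of a submodule of $Y$, which also embeds in $Y$, so the hypothesis applies to the relevant sequences. Actually, for the converse one only needs that each composition factor $S$ is a subquotient, and $S^\ast$ being a subquotient of $Y^\ast$ in the appropriate sense; I would phrase it via the filtration below rather than worrying about it separately.

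For the forward direction, pick a composition series $0 = Y_0 \subset Y_1 \subset \cdots \subset Y_n = Y$ with $Y_i/Y_{i-1} =: S_i$ simple. Each $Y_i$ is a submodule of $Y$, and the short exact sequence $0 \to Y_{i-1} \to Y_i \to S_i \to 0$ is of the form covered by the hypothesis (with $Y' = Y_i$ a submodule of $Y$, $X = Y_{i-1}$, $S = S_i$), hence perfect exact. Applying $(-)^\ast$ therefore yields a \emph{short} exact sequence $0 \to S_i^\ast \to Y_i^\ast \to Y_{i-1}^\ast \to 0$ of left $A$-modules. Now induct on $i$: $Y_0^\ast = 0$; and if $Y_{i-1}^\ast = 0$, then the short exact sequence above together with $S_i^\ast = 0$ (which holds since $S_i$ is a composition factor of $Y$) gives $Y_i^\ast = 0$. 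After $n$ steps we conclude $Y^\ast = Y_n^\ast = 0$.

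I do not anticipate a genuine obstacle; the only point requiring a little care is to make sure the hypothesis is invoked with the correct ``$Y'$''. The statement quantifies over all short exact sequences $0 \to X \to Y' \to S \to 0$ with $Y'$ a submodule of $Y$ and $S$ simple, and in the composition-series argument each $Y_i$ is indeed a submodule of $Y$, so perfect exactness of $0 \to Y_{i-1} \to Y_i \to S_i \to 0$ is exactly what is assumed. (It is worth noting that in the intended application, via \cref{Lem:Pperp_PerfSeq_IfDomDim}, this hypothesis is automatic when $\ddim A \geqslant 1$ and $Y \in \Pperp_A$, but the lemma is stated and proved without reference to that.) The converse direction can be folded into the same bookkeeping: the short exact sequences $0 \to S_i^\ast \to Y_i^\ast \to Y_{i-1}^\ast \to 0$ show by downward reasoning that if any $S_i^\ast \neq 0$ then $Y_i^\ast \neq 0$ and hence $Y^\ast \neq 0$, since $Y_{i-1}^\ast \twoheadleftarrow \cdots \twoheadleftarrow Y_n^\ast = Y^\ast$ — more precisely, $Y_i^\ast$ is a quotient of $Y^\ast$ via the chain of surjections, and $S_i^\ast \hookrightarrow Y_i^\ast$, so $S_i^\ast \neq 0 \Rightarrow Y_i^\ast \neq 0 \Rightarrow Y^\ast \neq 0$.
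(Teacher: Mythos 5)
Your proof is correct and takes essentially the same approach as the paper: both argue by induction along a composition filtration, applying $(-)^\ast$ to the perfect exact sequences $0 \to Y_{i-1} \to Y_i \to S_i \to 0$ to get short exact sequences $0 \to S_i^\ast \to Y_i^\ast \to Y_{i-1}^\ast \to 0$. The paper phrases it as a top-down induction on $l(Y)$ with a single maximal-submodule step, whereas you fix a full composition series and walk up from $Y_0 = 0$; these are presentational variants of the same argument.
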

\begin{proof}
We proceed by induction on the length of $Y$. 
There is nothing to show for $l(Y) = 1$ so we assume $l(Y) > 1$.
There exists a short exact sequence 
\[
0 \to X \to Y \to S \to 0
\]
with $S$ a simple $A$-module and $l(X) < l(Y)$. By assumption, this sequence is perfect exact.
This implies that
\[
0 \to S^\ast \to Y^\ast \to X^\ast \to 0
\]
is a short exact sequence. Thus $Y^\ast = 0$ if and only if $X^\ast = 0$ and $S^\ast = 0$.
Since $X$ is a submodule of $Y$, we are done by induction.
\end{proof}

Recall that a complex $F^\bt$ in $\LL_A$ satisfies $\HH^k(F^\bt) \in \Pperp_A$ for all $k \in \ZZ$.
If $\ddim A \geqslant 1$, the assumptions of the lemma above hold for the 
cohomology of $F^\bt$ by \cref{Lem:Pperp_PerfSeq_IfDomDim}.
We also have seen that an exact functor $- \otimes_A M$ preserves perfect exact sequences with projective middle term
if and only if $\Ext^1(Z,A) = 0$ implies $\Ext_B^1(Z \otimes_A M, B) = 0$ for all $Z \in \rmod A$;
cf.\ \cref{Prop:ExactsStableEquiv_ProjMiddleTerm}.(1).
This results in the following proposition.
\begin{Proposition} \label{Prop:ExactStableEquiv_PerfSeqDomDim_IsMoritaType}
Let $A$ and $B$ be finite dimensional $k$-algebras whose semisimple quotients are separable.
Assume that $A$ and $B$ have dominant dimension at least $1$.

Suppose given a bimodule ${}_AM_B$ which is projective as left $A$- and as right $B$-module such that 
$-\otimes_A M$ induces a stable equivalence $\stmod A \to \stmod B$. 
Assume furthermore that the following conditions hold.
\begin{itemize}
\item[(1)] The stable equivalence $-\otimes_A M$ and its quasi-inverse 
preserve perfect exact sequences with projective middle term.

\item[(2)] For all simple $A$-modules $S$ whose injective hull is not projective, the image
           $S \otimes_A M$ is a simple $B$-module.
\end{itemize}
Then $M$ and $\Hom_B(M,B)$ induce a stable equivalence of Morita type between $A$ and $B$.
\end{Proposition}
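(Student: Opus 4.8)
The plan is to verify the hypothesis of \cref{Thm:EquivOnLL-MoritaType} for $M$, namely that componentwise application of $-\otimes_A M$ induces an equivalence $\LL_A \xrightarrow{\sim} \LL_B$; the stable equivalence of Morita type between $A$ and $B$ then follows at once. Since ${}_AM$ is projective, $-\otimes_A M\colon\rmod A\to\rmod B$ is exact, and since $M_B$ is projective it maps $\proj A$ into $\proj B$, so $-\otimes_A M$ extends componentwise to an exact functor $\KKp A\to\KKp B$. Once we have shown that this functor sends $\LL_A$ into $\LL_B$, it is automatically an equivalence onto $\LL_B$: by \cref{Thm:Equivalence_F} it is identified with the given stable equivalence $-\otimes_A M\colon\stmod A\to\stmod B$, because for $F^\bt\in\LL_A$ both $F^\bt\otimes_A M$ and the Kato complex $F_X^\bt$ attached to $X:=\HH^0(\tleq F^\bt)\otimes_A M$ lie in $\LL_B$ and have the same image under the quasi-inverse $\HH^0(\tleq(-))$, hence are isomorphic. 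So the heart of the matter is the claim that $F^\bt\otimes_A M\in\LL_B$ for every $F^\bt\in\LL_A$.

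Fix $F^\bt\in\LL_A$ and write $G^\bt:=F^\bt\otimes_A M$. By exactness $\HH^k(G^\bt)=\HH^k(F^\bt)\otimes_A M$, so $\HH^{<0}(G^\bt)=0$, and it remains to show $\HH_k(G_\bt^\ast)=0$ for all $k\geqslant0$. Here I would invoke Yoshino's exact sequence (\cref{Thm:SequenceWith_H(*)-H()*}) for $G^\bt$ over $B$, which in the relevant degree takes the form
\[
0\to\Ext_B^1\!\big(\Cok(d_G^k),B\big)\to\HH_k(G_\bt^\ast)\to\Hom_B\!\big(\HH^k(G^\bt),B\big),
\]
and show that the two outer terms vanish for $k\geqslant0$. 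For the $\Ext^1$-term: applying the same Yoshino sequence to $F^\bt$ over $A$, the group $\Ext_A^1(\Cok(d_F^k),A)$ embeds into $\HH_k(F_\bt^\ast)$, which is $0$ for $k\geqslant0$ because $F^\bt\in\LL_A$; so $\Ext_A^1(\Cok(d_F^k),A)=0$ for $k\geqslant0$. Since $\Cok(d_G^k)=\Cok(d_F^k)\otimes_A M$ by right exactness, hypothesis~(1) together with \cref{Prop:ExactsStableEquiv_ProjMiddleTerm}.(1) then gives $\Ext_B^1(\Cok(d_G^k),B)=0$ for $k\geqslant0$.

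The $\Hom_B$-term is the delicate point. Fix $k\geqslant0$ and set $Y:=\HH^k(F^\bt)$. By \cref{Lem:L_in_Pperp} we have $Y\in\Pperp_A$, so by \cref{Lem:Pperp_PerfSeq_IfDomDim} (using $\ddim A\geqslant1$) $Y^\ast=0$ and the hypotheses of \cref{Lem:nu(X)=0-iff-nu(S)=0} hold for $Y$; hence every composition factor $S$ of $Y$ satisfies $S^\ast=0$. For a simple $A$-module $S$ with $S^\ast=0$, the injective hull $I(S)$ cannot be projective (otherwise $I(S)$ would be an indecomposable projective module with socle $S$, forcing $S\hookrightarrow A_A$ and thus $S^\ast\neq0$), so by hypothesis~(2) the $B$-module $S\otimes_A M$ is simple. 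Moreover $S^\ast=0$ gives $F_S^\bt[1]\notin\LL_A$ by \cref{Lem:Charact_nu(S)=0}, hence $F_{S\otimes_A M}^\bt[1]\notin\LL_B$ by \cref{Lem:ShiftUnderStableEquiv} (applicable by hypothesis~(1)), and since $S\otimes_A M$ is simple, \cref{Lem:Charact_nu(S)=0} now yields $(S\otimes_A M)^\ast=0$. Applying the exact functor $-\otimes_A M$ to a composition series of $Y$ produces a filtration of $\HH^k(G^\bt)=Y\otimes_A M$ with simple subquotients $S_i\otimes_A M$, each of vanishing $B$-dual; applying the left exact functor $\Hom_B(-,B)$ along this filtration gives $\Hom_B(\HH^k(G^\bt),B)=0$. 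Substituting both vanishings into Yoshino's sequence gives $\HH_k(G_\bt^\ast)=0$ for all $k\geqslant0$, so $G^\bt\in\LL_B$, and \cref{Thm:EquivOnLL-MoritaType} completes the argument.

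I expect the principal obstacle to be exactly this $\Hom_B$-term: the cohomology $\HH^k(F^\bt)\otimes_A M$ is in general not simple, so its $B$-dual cannot be read off directly. The way around it is to pass to composition factors on the $A$-side, where \cref{Lem:Pperp_PerfSeq_IfDomDim} and \cref{Lem:nu(X)=0-iff-nu(S)=0} are available thanks to $\ddim A\geqslant1$, to transport the vanishing of $(-)^\ast$ through the stable equivalence by combining \cref{Lem:Charact_nu(S)=0} with \cref{Lem:ShiftUnderStableEquiv}, and to exploit that hypothesis~(2) is precisely what keeps the relevant composition factors simple, so that \cref{Lem:Charact_nu(S)=0} can be applied a second time, now over $B$.
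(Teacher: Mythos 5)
Your proposal is correct and follows essentially the same strategy as the paper: reduce to showing $F^\bt\otimes_A M\in\LL_B$, split the vanishing of $\HH_k((F^\bt\otimes_A M)^\ast)$ via Yoshino's sequence into an $\Ext^1$-term (transported from the $A$-side using hypothesis~(1) and \cref{Prop:ExactsStableEquiv_ProjMiddleTerm}) and a $\Hom$-term (handled by passing to composition factors, using $\ddim A\geqslant 1$ together with \cref{Lem:L_in_Pperp}, \cref{Lem:Pperp_PerfSeq_IfDomDim}, \cref{Lem:nu(X)=0-iff-nu(S)=0} and the fact that $(S\otimes_A M)^\ast=0$ for each composition factor $S$, which is precisely where hypothesis~(2) enters). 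Your treatment of the $\Hom$-term via the filtration of $Y\otimes_A M$ and left-exactness of $\Hom_B(-,B)$, and your appeal to \cref{Lem:ShiftUnderStableEquiv} rather than to \cref{Lem:Charact_nu(S)=0}.(2) directly, are only cosmetic variants of the paper's induction-on-length argument.
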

\begin{proof}
Suppose given $F^\bt \in \LL_A$. Using that $-\otimes_A M$ is an exact functor, it remains to show that
$\HH_k((F^\bt \otimes_A M)^\ast) = 0$ for $k \geqslant 0$. In this case, the assertion follows from
\cref{Thm:EquivOnLL-MoritaType}.
By \cref{Thm:SequenceWith_H(*)-H()*}, the vanishing of $\HH_k((F^\bt \otimes_A M)^\ast)$ is implied by 
$\Ext_B^1(\Cok(d_{F\otimes M}^k),B) = 0$ and $\HH^k(F^\bt \otimes_A M)^\ast = 0$. We fix a $k \geqslant 0$.

\textit{We show that $\Ext_B^1(\Cok(d_{F\otimes M}^k),B) = 0$.} 
Since $F^\bt \in \LL_A$, we have $\HH_k(F_\bt^\ast) = 0$. The exact sequence in \cref{Thm:SequenceWith_H(*)-H()*}
now implies that $\Ext_A^1(\Cok(d_F^k),A) = 0$. 
By \cref{Prop:ExactsStableEquiv_ProjMiddleTerm}.(1) and assumption (1), 
we obtain that $\Ext_B^1(\Cok(d_F^k) \otimes_A M,B) = 0$. 
Using that $- \otimes_A M$ is exact, we additionally have that
\[
\Cok(d_F^k) \otimes_A M \simeq \Cok(d_F^k \otimes_A M) = \Cok(d_{F \otimes M}^k).
\]
This results in $\Ext_B^1(\Cok(d_{F\otimes M}^k),B) = 0$.

\textit{We show that $\HH^k(F^\bt \otimes_A M)^\ast = 0$.}
By \cref{Lem:L_in_Pperp}, we have $\HH^k(F^\bt) \in \Pperp_A$.
In particular, we have $\HH^k(F^\bt)^\ast = 0$ since $\ddim A \geqslant 1$.
It suffices to show the following claim.

\textit{Claim.} Let $X \in \rmod A$ with $X \in \Pperp_A$. Then $(X \otimes_A M)^\ast = 0$.

We prove the claim by induction on the length $l := l(X)$ of $X$.
Since $- \otimes_A M$ is exact, we have $l = l(X) = l(X \otimes_A M)$.
Note that we have $X^\ast = 0$ since $\ddim A \geqslant 1$ by assumption.
Moreover, the assumptions of \cref{Lem:nu(X)=0-iff-nu(S)=0}
are satisfied by \cref{Lem:Pperp_PerfSeq_IfDomDim}. 
In particular, we have $S^\ast = 0$ for all composition factors $S$ of $X$.
Note that this implies that $\nu_A(F_S^0)$ is not projective, since $\Hom_A(S,\nu(F_S^0)) \neq 0$.
Hence, $S \otimes_A M$ is a simple $B$-module by assumption (2).

Let $l = 1$ so that $X$ and $X \otimes_A M$ are simple modules.
Thus, $(X \otimes_A M)^\ast = 0$ if and only if $X^\ast = 0$ by \cref{Lem:Charact_nu(S)=0}.(3,4)
since $- \otimes_A M$ preserves perfect exact sequences with projective middle term by assumption (1).
We have seen above, that $S^\ast = 0$ for all composition factors $S$ of $X$.

Let $l > 1$. Suppose given a composition factor $S$ of $X$ together with a short exact sequence
\[
0 \to U \to X \to S \to 0\,.
\]
By \cref{Lem:Pperp_PerfSeq_IfDomDim}, we have $U^\ast = 0$ and this is a perfect exact sequence.
Using that $l(U) < l$, we can assume that $(U \otimes_A M)^\ast = 0$ by induction.
Thus, the induced short exact sequence 
\[
0 \to U \otimes_A M \to X \otimes_A M \to S \otimes_A M \to 0
\]
is perfect exact in $\rmod B$.
In particular, applying $(-)^\ast$, we obtain a short exact sequence in $B$-mod with $(U \otimes_A M)^\ast = 0$.
\[
0 \to (U \otimes_A M)^\ast \to (X \otimes_A M)^\ast \to (S \otimes_A M)^\ast \to 0
\]
As in the case $l=1$, we also have $(S \otimes_A M)^\ast = 0$. This shows that $(X \otimes_A M)^\ast = 0$.
\end{proof}

Let $S$ be a simple module whose injective hull is not projective.
For algebras without nodes, a stable equivalence maps $S$ up to projective direct summands to a simple module.
This follows from a result by  Mart\'inez-Villa in \cite[Proposition 2.4]{MartinezVilla_Properties}.
We slightly adapt his proof to show the following analogue for stable equivalences that are induced by an exact functor.
\begin{Lemma} \label{Lem:ExactStableEquiv_SimpleModules}
Let ${}_AM_B$ be a bimodule that is projective as left $A$- and as right $B$-module such that 
$-\otimes_A M$ induces a stable equivalence $\stmod A \to \stmod B$. 
Suppose that the inverse stable equivalence is also induced by an exact functor.

Let $S$ be a non-projective simple $A$-module with injective hull $I$ such that $I$ is not projective.
We have $S \otimes_A M \simeq S' \oplus P$ such that 
$S'$ is a simple $B$-module and $P \in \proj B$.
\end{Lemma}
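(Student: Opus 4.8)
The plan is to adapt Mart\'inez-Villa's proof of \cite[Proposition 2.4]{MartinezVilla_Properties} to the present situation; the role played there by the hypothesis that $A$ and $B$ have no nodes will here be taken over by the exactness of both functors.

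First I would carry out the reduction that already works for an arbitrary stable equivalence $\alpha := -\otimes_A M$. Since $S$ is simple and non-projective, $\operatorname{End}_A(S)=\underline{\operatorname{End}}_A(S)$ is a division ring, so $\underline{\operatorname{End}}_B(S\otimes_A M)$ is a division ring as well. A module whose stable endomorphism ring is a division ring becomes, after deleting a projective direct summand, indecomposable and non-projective (otherwise one produces a nontrivial idempotent, or $0=1$, in the stable endomorphism ring); hence $S\otimes_A M\simeq S'\oplus P$ with $P\in\proj B$, $S'$ indecomposable non-projective, $\underline{\operatorname{End}}_B(S')$ a division ring, and $S'\neq 0$ because $S$ is non-projective. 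Running the same argument for the quasi-inverse --- induced by the exact functor $-\otimes_B L$ for a bimodule ${}_BL_A$ --- and using that two indecomposable non-projective modules with isomorphic stable classes are isomorphic, one gets $S'\otimes_B L\cong S\oplus Q$ with $Q\in\proj A$. It remains to prove that $S'$ is a simple $B$-module.

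Next I would reformulate the hypothesis and assemble the tools from exactness. For a simple module, a non-zero morphism into a projective-injective module is a monomorphism, so the injective hull $I$ of $S$ is a direct summand of such a module precisely when $I$ is projective; hence "the injective hull of $S$ is not projective" is equivalent to $S\in\Pperp_A$, and by \cref{Lem:Charact_nu(S)=0} together with \cref{Prop:ExactsStableEquiv_ProjMiddleTerm} (applied to $-\otimes_A M$ and its exact quasi-inverse) the corresponding statement $S'\in\Pperp_B$ is forced. Moreover $-\otimes_A M$ is exact and carries $\proj A$ to $\proj B$ (and symmetrically $-\otimes_B L$), so both functors preserve every short exact sequence and, as one checks from the projective cover, commute with the syzygy functor up to projective summands.

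The heart of the proof, and the step I expect to be the main obstacle, is to show that $S'$ has no proper non-zero submodule: being simple is not a property of the stable category, so this cannot be obtained formally and requires genuine module-theoretic work. Following Mart\'inez-Villa, I would argue by contradiction from a short exact sequence $0\to U\to S'\to V\to 0$ in $\rmod B$ with $U,V\neq 0$. Applying the exact functor $-\otimes_B L$ and using $S'\otimes_B L\cong S\oplus Q$ yields a short exact sequence $0\to U\otimes_B L\to S\oplus Q\to V\otimes_B L\to 0$ in $\rmod A$, in which the copy of $S$ is simple; analysing how $S$ sits relative to the submodule $U\otimes_B L$, and invoking $S\in\Pperp_A$ to discard the degenerate configurations, one re-applies $-\otimes_A M$ and compares the outcome with $S\otimes_A M\simeq S'\oplus P$ to contradict the indecomposability (or the non-projectivity) of $S'$. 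The delicate points will be precisely this adaptation --- pinning down which configurations the condition $S\in\Pperp_A$ must exclude, which is exactly where the injective-hull hypothesis is indispensable --- together with the careful bookkeeping of projective direct summands through the two applications of the functors.
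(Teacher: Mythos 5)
Your reduction to $S\otimes_A M\simeq S'\oplus P$ with $S'$ indecomposable non-projective, and your observation that the hypothesis on the injective hull is equivalent to $S\in\Pperp_A$, are both correct. But the argument stops precisely where the real content lies: you announce a contradiction argument starting from a short exact sequence $0\to U\to S'\to V\to 0$, apply $-\otimes_B L$ to get $0\to U\otimes L\to S\oplus Q\to V\otimes L\to 0$, and then say one should ``analyse how $S$ sits relative to $U\otimes_B L$'' and ``discard the degenerate configurations.'' No concrete contradiction is produced. The two configurations ($S\subseteq U\otimes_B L$ versus $S\cap(U\otimes_B L)=0$) do not obviously lead anywhere: in the second case $S$ embeds in $V\otimes_B L$ and $U\otimes_B L$ embeds in $Q$, which is perfectly consistent; in the first case $(U\otimes_B L)/S$ embeds in $Q$ and nothing is forced either. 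Nothing in the sketch uses $S\in\Pperp_A$ in a way that rules these out, and the ``careful bookkeeping of projective summands through two applications'' is exactly the part that is missing. As written, the proposal is a plan rather than a proof.

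The paper avoids the obstacle you flag by never trying to prove that a module obtained as a stable transport of $S$ is simple. Instead it \emph{defines} $S'$ to be a simple module from the start: it transports the injective hull $I$ to an indecomposable injective non-projective $B$-module $\alpha'(I)$ (using \cite[Lemma 3.5]{Liu_ExactFunctor}, which is where exactness of both directions enters), sets $S':=\soc(\alpha'(I))$, shows that the irreducible projection $\pi:I\twoheadrightarrow I/S$ transports to an irreducible morphism $\alpha'(\pi)$ (by \cite[Lemma X.1.2]{AuslanderReitenSmalo}) whose kernel contains $S'$, deduces $\alpha'(I)/S'\simeq\alpha'(I/S)$ by a split-epimorphism argument in both directions, and then reads off $S\otimes_A M\simeq S'\oplus(\text{projective})$ from a three-row exact diagram and the snake lemma. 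If you want to pursue your route, you would need to actually supply the contradiction in your step 3; the cleaner path is to follow the paper and construct the simple module directly as a socle.
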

\begin{proof}
The stable equivalence $- \otimes_A M$ induces a one-to-one correspondence between the isomorphism classes of 
indecomposable non-projective modules in $\rmod A$ and in $\rmod B$. We denote this correspondence by $\alpha'$.
Let $\pi : I \twoheadrightarrow I/S$ be the natural projection,
which is an irreducible morphism. Since $I$ is not projective, we know that 
$\underline{\pi} \neq 0$ in $\stmod A$. By \cite[Lemma X.1.2]{AuslanderReitenSmalo}, we obtain that the morphism 
$\alpha'(\pi) : \alpha'(I) \to \alpha'(I/S)$ which is induced by $\pi \otimes M$ is irreducible.

Using that the stable equivalence and its quasi-inverse are induced by an exact functor,
$\alpha'(I)$ is an indecomposable injective and non-projective $B$-module; cf.\ \cite[Lemma 3.5]{Liu_ExactFunctor}.
Thus, $S':=\soc(\alpha'(I))$ is a simple $B$-module. We have $S' \subseteq \Ker(\alpha'(\pi))$ since
$\pi\otimes M$ is not a stable isomorphism. This induces a morphism
\[
f : \alpha'(I)/S' \to \alpha'(I/S)
\]
such that $\pi'\, f = \alpha'(\pi)$ with the natural projection 
$\pi' : \alpha'(I) \twoheadrightarrow \alpha'(I)/S'$.
However, $\alpha'(\pi)$ is irreducible and thus $f$ must be a split epimorphism.
Now, consider the natural projection $\pi' : \alpha'(I) \twoheadrightarrow \alpha'(I)/S'$.
Let $\beta'$ be the inverse of the correspondence $\alpha'$. As above, we obtain that
\[
f' : I/S \to \beta'(\alpha'(I)/S')
\]
is a split epimorphism. As a consequence, $\alpha'(I/S)$ is a direct summand of $\alpha'(I)/S'$.
Together with the split epimorphism $f$, this results in $\alpha'(I)/S' \simeq \alpha'(I/S)$.

Write $I \otimes_A M \simeq \alpha'(I) \oplus P$ and $(I/S) \otimes_A M  \simeq \alpha'(I/S) \oplus Q$ 
with $P,\, Q \in \proj B$. Consider the following commutative diagram with $C$ the cokernel of the induced morphism 
$S' \to S \otimes_A M$. 
\[
\begin{tikzcd}
0 \ar[r] & S' \ar[r] \ar[d] & \alpha'(I) \ar[r] \ar[d] & \alpha'(I/S) \ar[r] \ar[d] & 0 \\
0 \ar[r] & S\otimes_A M \ar[r] \ar[d] & I \otimes_A M \ar[r] \ar[d] & (I/S) \otimes_A M \ar[r] \ar[d] & 0 \\
0 \ar[r] & C \ar[r] & P \ar[r] & Q \ar[r]  & 0 
\end{tikzcd}
\]
Since $- \otimes_A M$ is exact and $\alpha'(I/S) \simeq \alpha'(I)/S'$, all rows are short exact sequences.
In particular, the bottom row splits since $Q$ is projective. Thus, $C$ must be projective as well and we obtain
$S\otimes_A M \simeq S' \oplus C$.
\end{proof}

We summarize the results of this section and include situations in which the assumptions are satisfied.
\begin{Theorem} \label{Thm:StableEquivMoritaType-Overview}
Let $A$ and $B$ be finite dimensional $k$-algebras whose semisimple quotients are separable.
Suppose given a bimodule ${}_AM_B$ which is projective as left $A$- and as right $B$-module such that 
$-\otimes_A M$ induces a stable equivalence $\stmod A \to \stmod B$.

Suppose one of the following conditions holds.
           \begin{itemize}
           \item[$(i)$] The functor $- \otimes_A M$ induces an equivalence $\LL_A \to \LL_B$.
           
           \item[$(ii)$] The homology $\HH_k((F^\bt \otimes_A M)^\ast)$ vanishes for $F^\bt \in \LL_A$ and $k \geqslant 0$.
           
           \item[$(iii)$] There exist natural isomorphisms $\nu_B(P \otimes_A M) \simeq \nu_A(P) \otimes_A M$ 
                          for all $P \in \proj A$.
                        
           \item[$(iv)$] There exists a natural isomorphism $M \otimes_B \Du\!B \simeq \Du\!A \otimes_A M$
                         of right $B$-modules.
           
           \item[$(v)$] The algebras $A$ and $B$ have no nodes.
                        At least one of $A$ or $B$ has dominant dimension at least $1$ and finite representation type.
                        Moreover, for all simple $A$-modules $S$ whose injective hull is not projective, the image
                        $S \otimes_A M$ is an indecomposable $B$-module.
                        
           \item[$(vi)$] The algebras $A$ and $B$ have no nodes.
                        At least one of $A$ or $B$ is a Nakayama algebra.
                        Moreover, for all simple $A$-modules $S$ whose injective hull is not projective, the image
                        $S \otimes_A M$ is an indecomposable $B$-module.
                        
           \item[$(vii)$] The algebras $A$ and $B$ have dominant dimension at least $1$.
                        There is a bimodule ${}_B L_A$ which is projective as left $B$- 
                        and right $A$-module and which induces the inverse stable equivalence.
                        Moreover, for all simple $A$-modules $S$ whose injective hull is not projective, the image
                        $S \otimes_A M$ is an indecomposable $B$-module.
           \end{itemize}
Then $M$ and $\Hom_B(M,B)$ induce a stable equivalence of Morita type between $A$ and $B$.
\end{Theorem}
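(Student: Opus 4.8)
The plan is a case analysis: each of $(i)$--$(vii)$ is reduced to a statement already proved in this section. Conditions $(i)$, $(iii)$ and $(iv)$ need no further argument. Indeed $(i)$ is precisely the hypothesis of \cref{Thm:EquivOnLL-MoritaType}, and $(iii)$, $(iv)$ are conditions $(1)$ and $(2)$ of \cref{Cor:MoritaType-Iff-Nu} (which were shown there to be equivalent to one another); in all three cases the standing hypotheses these results require — separability of the semisimple quotients, projectivity of ${}_AM$ and $M_B$, and that $-\otimes_A M$ induces a stable equivalence — are part of the present assumptions, so the conclusion is immediate. It remains to treat $(ii)$ and $(v)$--$(vii)$.

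For $(ii)$ I would prove $(ii)\Rightarrow(i)$. Since ${}_AM$ is projective, $-\otimes_A M$ is exact, so for $F^\bt\in\LL_A$ the complex $F^\bt\otimes_A M$ lies in $\KKp B$ and satisfies $\HH^{<0}(F^\bt\otimes_A M)=\HH^{<0}(F^\bt)\otimes_A M=0$, while $\HH_{\geqslant0}\big((F^\bt\otimes_A M)^\ast\big)=0$ is exactly hypothesis $(ii)$; hence $F^\bt\otimes_A M\in\LL_B$, and componentwise tensoring with $M$ restricts to a functor $\LL_A\to\LL_B$. By \cref{Thm:Equivalence_F} for $A$ and for $B$, together with the identification $F_X^\bt\otimes_A M\simeq F_{X\otimes_A M}^\bt$ (valid since $\tleq(F_X^\bt\otimes_A M)$ is a projective resolution of $X\otimes_A M$), this restricted functor is, up to the equivalences $\FF$, the stable equivalence $\stmod A\to\stmod B$ induced by $-\otimes_A M$; hence it is itself an equivalence, which is condition $(i)$, and \cref{Thm:EquivOnLL-MoritaType} then applies.

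For $(v)$, $(vi)$ and $(vii)$ I would verify the three hypotheses of \cref{Prop:ExactStableEquiv_PerfSeqDomDim_IsMoritaType} — dominant dimension at least $1$ for $A$ and $B$, preservation of perfect exact sequences with projective middle term by $-\otimes_A M$ and its quasi-inverse, and simplicity of $S\otimes_A M$ for every simple $A$-module $S$ with non-projective injective hull — and conclude from that proposition. Under $(vii)$: the dominant dimension condition is assumed; preservation of perfect exact sequences with projective middle term follows from \cref{Prop:ExactsStableEquiv_ProjMiddleTerm}.(2) applied to $M$ with inverse $L$ and to $L$ with inverse $M$; and $S\otimes_A M$ is simple because \cref{Lem:ExactStableEquiv_SimpleModules} yields $S\otimes_A M\simeq S'\oplus P$ with $S'$ simple and $P\in\proj B$, so the assumed indecomposability forces $P=0$ (it suffices to treat the non-projective such $S$, which are the only ones invoked in the proof of \cref{Prop:ExactStableEquiv_PerfSeqDomDim_IsMoritaType}). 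Under $(v)$: finite representation type is a stable invariant (the number of indecomposable non-projectives is preserved), so both $A$ and $B$ are representation-finite; being node-free, \cref{Cor:StableEquivPreservePerfSeq} gives that $-\otimes_A M$ and its quasi-inverse preserve all perfect exact sequences, hence those with projective middle term; the Mart\'inez-Villa result recalled before \cref{Lem:ExactStableEquiv_SimpleModules} sends such a simple $S$ to a simple module up to projective summands, which with the indecomposability hypothesis yields simplicity of $S\otimes_A M$; and the dominant dimension condition is the remaining assumption. Finally $(vi)$ reduces to $(v)$, since a Nakayama algebra is representation-finite and has dominant dimension at least $1$.

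The step requiring the most care is making the dominant dimension hypothesis available in the form the proof of \cref{Prop:ExactStableEquiv_PerfSeqDomDim_IsMoritaType} actually needs: that proof uses $\ddim\geqslant1$ only for the algebra one tensors out of, namely $A$, whereas in case $(v)$ only one of $A$, $B$ is assumed to have dominant dimension at least $1$. One must therefore either transfer this condition across the stable equivalence between node-free algebras — for instance via the characterisation of the vanishing of $S^\ast$ through perfect exact sequences in \cref{Lem:Charact_nu(S)=0} — or reduce by symmetry to the case in which it is $A$ that carries it. Beyond this bookkeeping and the routine matching of hypotheses, the only content not already isolated in an earlier statement is the verification $F^\bt\otimes_A M\in\LL_B$ and the functor identification used in case $(ii)$.
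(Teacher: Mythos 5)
Your case split and the reductions to \cref{Thm:EquivOnLL-MoritaType}, \cref{Cor:MoritaType-Iff-Nu}, and \cref{Prop:ExactStableEquiv_PerfSeqDomDim_IsMoritaType} mirror the paper's proof precisely, and the treatment of $(i)$--$(iv)$, $(vi)$, and $(vii)$ is essentially identical (the paper is terser on $(ii)\Rightarrow(i)$, stating only that exactness gives $\HH^{<0}(F^\bt\otimes_A M)=0$; your fuller justification is the same argument spelled out).

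The one place you depart from the paper is exactly the subtlety you flagged: transferring dominant dimension across the stable equivalence in case $(v)$. The paper settles this by citing Mart\'inez-Villa's Theorem~2.3 of \cite{MartinezVilla_Properties}, which says a stable equivalence between algebras without nodes preserves dominant dimension, so both $A$ and $B$ get $\ddim\geqslant1$ and \cref{Prop:ExactStableEquiv_PerfSeqDomDim_IsMoritaType} applies as stated. Neither of your two proposed substitutes is quite complete as written: a pure "reduce by symmetry" move is insufficient because the proposition's statement (as opposed to the parts of its proof you inspected) assumes $\ddim\geqslant1$ for \emph{both} algebras, and the route through \cref{Lem:Charact_nu(S)=0} would need an extra argument linking $\ddim\geqslant1$ to the vanishing of $S^\ast$ for the relevant simples and then transferring that, which you sketch but do not carry out. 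So the structure is right and the gap is small; the efficient fix is the external citation the paper uses (or, with more work, verifying that only $\ddim A\geqslant1$ is actually used in the proof of \cref{Prop:ExactStableEquiv_PerfSeqDomDim_IsMoritaType} and then symmetrizing).
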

\begin{proof}
If condition (i) holds, we have seen in \cref{Thm:EquivOnLL-MoritaType} that $M$ and $\Hom_B(M,B)$ induce 
a stable equivalence of Morita type between $A$ and $B$. 
Let $F^\bt \in \LL_A$. Since $- \otimes_A M$ is an exact functor, we know that $\HH^k(F^\bt \otimes_A M) = 0$ for
$k \leqslant -1$. Thus, condition (ii) implies condition (i).
By \cref{Cor:MoritaType-Iff-Nu}, condition (iii) and (iv) also imply condition (i).
The last three conditions (v), (vi) and (vii) are a consequence of \cref{Prop:ExactStableEquiv_PerfSeqDomDim_IsMoritaType} 
using the following additional results.

Since $- \otimes_A M : \stmod A \to \stmod B$ is a stable equivalence, $A$ is of finite representation type if and
only if $B$ is of finite representation type. Moreover, by \cite[Theorem 2.3]{MartinezVilla_Properties},
$\alpha$ preserves the dominant dimension if $A$ and $B$ have no nodes. Note that a Nakayama algebra is of 
finite representation type and has dominant dimension at least $1$.
In (iv) and (v) we now use that a stable equivalence between algebras without nodes and of finite representation type 
preserves perfect exact sequences by \cref{Cor:StableEquivPreservePerfSeq}.
In the setting of part (vi), perfect exact sequences with projective middle term are preserved by 
\cref{Prop:ExactsStableEquiv_ProjMiddleTerm}.
Finally, for a simple $A$-module $S$, we have that $S \otimes_A M$ is isomorphic to a direct sum of a simple module and a 
projective module by \cite[Proposition 2.4]{MartinezVilla_Properties} in the setting of part (iv) and (v) and by 
\cref{Lem:ExactStableEquiv_SimpleModules} in the setting of part (vi).
If $S \otimes_A M$ is indecomposable, $S \otimes_A M$ must be isomorphic to a simple $B$-module.
Therefore, both assumptions of \cref{Prop:ExactStableEquiv_PerfSeqDomDim_IsMoritaType} are satisfied if condition
(iv), (v) or (vi) holds.
\end{proof}

\begin{Remark}
Suppose that ${}_A M_B$ is a bimodule such that $- \otimes_A M$ induces an equivalence $\LL_A \to \LL_B$ as in 
part (i) of the previous theorem.
Let $S$ be a simple $A$-module with $S^\ast = 0$.
If $\ddim A \geqslant 1$, this holds for simple $A$-modules whose 
injective hull is not projective. Then $F_S^{\leqslant 0} = F_S^\bt \in \LL_A$ is a projective resolution of 
$S = \HH^0(\tleq F_S^\bt) $; cf. \cref{Lem:Charact_nu(S)=0}. Thus,
$F_S^\bt \otimes_A M \in \LL_B$ is a projective resolution of $S \otimes_A M$.
In particular, $(S\otimes_A M)^\ast = 0$ by \cref{Lem:Charact_nu(S)=0} and we obtain that $S \otimes M$ 
has no projective direct summand. Thus, $S\otimes_A M$ is indecomposable.

Suppose ${}_A M_B$ and ${}_B N_A$ are bimodules that induce a stable equivalence of Morita type.
If ${}_A M_B$ and ${}_B N_A$ are indecomposable as bimodules, we even have that $S \otimes M$ is indecomposable 
for all simple $A$-modules $S$; cf.\ \cite[Lemma 4.4]{KoenigLiu_sms}.

It seems unclear whether the assumption in the previous theorem on the image $S \otimes_A M$ of a simple $A$-module can be 
dropped if we assume that ${}_A M_B$ is an indecomposable bimodule.
\end{Remark}

\textbf{Acknowledgements.} The results of this article are part of the author's PhD thesis \cite{Nitsche_PhD}.
The author would like to thank Steffen Koenig and Yuming Liu for helpful comments and suggestions.

\end{document}